\newtheorem{theorem}{Theorem}[section]
\newtheorem{prop}[theorem]{Proposition}
\newtheorem{lemma}[theorem]{Lemma}
\theoremstyle{definition}
\newtheorem{remark}[theorem]{Remark}
\newtheorem{question}[theorem]{Question}
\newtheorem{definition}[theorem]{Definition}
\newtheorem{example}[theorem]{Example}
\begin{document}

\title{Relative Ruan and Gromov-Taubes Invariants of Symplectic 4-Manifolds}

\author{Josef G. Dorfmeister}
\address{Institut f\"ur Differentialgeometrie\\  Leibniz Universit\"at Hannover\\ 30167 Hannover, Germany\\
j.dorfmeister@math.uni-hannover.de}

\author{Tian-Jun Li}
\address{School  of Mathematics\\  University of Minnesota\\ Minneapolis, MN 55455\\
tjli@math.umn.edu}

\date{\today}
\begin{abstract}
We define relative Ruan invariants that count embedded connected symplectic submanifolds which contact a fixed symplectic hypersurface $V$ in a symplectic 4-manifold $(X,\omega)$ at prescribed points with prescribed contact orders (in addition to insertions on $X\backslash V$) for stable $V$.  We obtain invariants of the deformation class of $(X,V,\omega)$.  Two large issues must be tackled to define such invariants: (1) Curves lying in the hypersurface $V$ and (2) genericity results for almost complex structures constrained to make $V$ pseudo-holomorphic (or almost complex).  Moreover, these invariants are refined to take into account rim tori decompositions.  In the latter part of the paper, we extend the definition to disconnected submanifolds and construct relative Gromov-Taubes invariants.

\end{abstract}

\maketitle

\tableofcontents

\section{Introduction}

Gromov's paper \cite{Gr1} initiated the intense study of pseudoholomorphic curves in symplectic manifolds as a means to construct symplectic invariants.  This has led to a wide range of invariants for symplectic manifolds which "count" such curves.    After the development of the symplectic sum and symplectic cut (\cite{Le}, \cite{Go2}, \cite{MW}), research turned also to developing invariants relative to a fixed symplectic hypersurface $V$, i.e. a symplectic submanifold (or a smooth divisor) of codimension 2.  A variety of relative invariants have been developed which consider curves in $X$ which contact $V$ in a specified manner (\cite{IP4}, \cite{LR}, \cite{Li1}, \cite{LZ}).  In particular Gromov-Witten theory has seen great advances in the last few years:  Tools such as the decomposition formulas of \cite{LR}, \cite{IP5} and \cite{Li2} have been developed, correspondences between different invariants have been found in \cite{MP2} and \cite{HLR} and the invariants have been explicitly calculated for a range of manifolds, see any of the sources cited above.  Gromov-Witten invariants are  based on moduli spaces of maps from (nodal) Riemann surfaces to a symplectic manifold $(X,\omega)$.  Much of the work involved in defining these invariants has to do with the intractable nature of these moduli spaces.

A more natural object of study in the symplectic category are embedded symplectic submanifolds.  This topic was initially studied by Ruan \cite{R}, who initiated the development of invariants which count embedded submanifolds in four-manifolds.  Ruan's invariant counted only connected submanifolds.  In a series of fundamental papers (\cite{T4}, \cite{T}), Taubes defined Gromov-Taubes invariants, which give a delicate count of embedded, possibly disconnected submanifolds (This count refines Ruan's earlier results, in particular providing a detailed analysis of the behavior of square 0- tori and their multiple covers.), and equated them with Seiberg-Witten invariants.

In this paper, we define corresponding relative invariants, called relative Ruan invariant and relative Gromov-Taubes invariant, which count embedded symplectic submanifolds which contact a fixed stable symplectic hypersurface $V$ at prescribed points with prescribed contact orders (in addition to insertions on $X\backslash V$).  These invariants will be shown to be deformation invariants of the symplectic structures, note however that due to the relative setting, the symplectic structures must all make the hypersurface $V$ symplectic.  This connects these invariants to the relative symplectic cone defined \cite{DL}.

It should be emphasized, that Ruan-type invariants give a count of geometric objects, much as counting curves in classic algebraic geometry.  GW-type invariants count maps, not the actual geometric images.  In the absolute setting, these two viewpoints were shown to be related to each other by Ionel and Parker, see \cite{IP1}.

%
%

The following Sections are devoted to a precise formulation of the relative spaces of submanifolds underlying the relative Ruan invariant and the properties of these spaces.  Let $X$ be a symplectic 4-manifold and $V$ a 2-dimensional symplectic submanifold.  We would like to count embedded symplectic submanifolds which intersect $V$.  Section \ref{spaces} defines the space of relative, connected submanifolds ${\mathcal K}_V(A,J,\mathcal I_A)$ in class $A\in H_2(X)$, where $A$ is not multiply toroidal, i.e. not of the form $A=mT$ with $T^2=0$ and $K_\omega\cdot T=0$, which meet the hypersurface $V$ according to initial data $\mathcal I_A$.  In order to understand the role of the almost complex structure $J$, we consider the symplectic submanifolds as embedded $J$-holomorphic curves.  The behavior of curves in class $A$ for generic $J$ is rather straightforward if $A\ne \mathfrak V$, denoting the class of $V$ by $\mathfrak V$.  However, care must be taken when $A=\mathfrak V$:  The associated differential operator is no longer surjective if $d_\mathfrak V=\frac{\mathfrak V^2+c_1(\mathfrak V)}{2} <0$.  In this case the curve $V$ is rigid, this leads to the definition of a stable symplectic hypersurface $V$ as one which has $d_{\mathfrak V}\ge 0$.  Generically, for a class $A$, we can avoid this issue; if $A\ne\mathfrak V$, then we simply have no curves in this class generically if  $d_A<0$.  However, this does not apply to the class $\mathfrak V$ as we are free to choose and fix a hypersurface $V$ at the start, so our choice may be non-generic in the sense that $d_\mathfrak V<0$.

The following issue must also be addressed:  Submanifolds which have components that lie in the hypersurface $V$.  This is of interest in particular with a view towards a degeneration formula.  This is discussed preceding Lemma \ref{index}.

Having understood the behavior of embedded curves under deformations of the almost complex structure (Section \ref{generic}), we proceed to understand the properties of the space of relative {\it connected} submanifolds $\mathcal K_V(J,\mathcal I_A)$ for fixed initial data $I_A$ and stable $V$.  We show in Section \ref{structure} that $\mathcal K$ is a smooth finite manifold which behaves well under deformations of the almost complex structure, the initial data and the symplectic structure on $X$.  The greater part of Section \ref{structure} addresses the compactness of $\mathcal K_V$.    

After finding a suitable set of almost complex structures and initial data such that the spaces ${\mathcal K}_V(A,J,\mathcal I_A)$ have the desirable properties, we proceed to define a number $Ru^V(A,[\mathcal I_A])$.  After making precise this definition, we show the invariant properties of $Ru^V(A,[\mathcal I_A])$:  It does not depend on the particular choice of $J$ or $\mathcal I_A$, but rather the class $[\mathcal I_A]$ and the deformation class of $(X,V,\omega)$, see Theorem \ref{invN}.  

At this stage, we will have defined a basic invariant which counts connected non-multiply toroidal curves for stable $V$.  From here, one can proceed in different directions.  A natural inclination would be to attempt to remove the conditions placed on $A$ and $V$.  More general choices of $V$ will involve different methods and will be described in a further paper.  Here we only make some remarks concerning the extension of our results to the non-stable case, see Lemma \ref{wsup}.  A relative Taubes invariant, giving a count of multiply toroidal $A$ relative to $V$,  is discussed in Section \ref{taubesinv}.  Our results on generic almost complex structures in the relative setting and arguments in \cite{T}, show that the invariant defined by Taubes needs no modification in the relative setting.

We proceed by extending the basic invariant to disconnected curves and thereby developing a relative version of the Gromov-Taubes invariant is the content of Section \ref{rgt}.  We show that this invariant can be written as a product of the connected invariants, hence the invariant properties follow immediately.

A further direction would be to incorporate rim-tori.  In \cite{IP4}, Ionel and Parker define a relative Gromov-Witten invariant which takes into account so called rim-tori structures.  Including such structures into our count, we construct a more refined invariant than $Ru^V(A,[\mathcal I_A])$.  In Section \ref{rimtori} we review their rim-tori constructions and define $Ru^V(\hat A,[\mathcal I_A])$.  It is expected that the refined invariant defined in Section \ref{refined} can distinguish isotopy classes of hypersurfaces in the class $\mathfrak V$.

\section{Relative Submanifolds and their Behavior}

In this section we will describe the framework needed to define a space of submanifolds of $X$ which meet a fixed symplectic hypersurface $V$.  We will precisely describe the constraining data on the submanifolds as well as how to understand this from the viewpoint of sections of the bundle $N$ as was used in the absolute case by Taubes.    In contrast to Gromov-Witten theory, we shall fix certain initial data and not view our invariant as an abstract map on cohomology, though we shall allow the initial data to move in homological families.  This initial data shall determine how the submanifolds meet $V$, they can be viewed as representatives of insertions in Gromov-Witten theory.

Let $X$ be a compact, connected 4-dimensional smooth manifold admitting symplectic structures and fix a connected symplectic hypersurface $V\subset X$.  The symplectic form $\omega$ is a nondegenerate closed 2-form on $X$, its class will be denoted by $[\omega]\in H^2(X,\mathbb R)$.    Denote the space of $\omega$-compatible almost complex structures $J$ by $\mathcal J_\omega$.  Given any $J\in\mathcal J_\omega$, denote the corresponding canonical class $-K_\omega = c_1(X,\omega) = c_1(X,J)$.  Note the dependence on the choice of symplectic form.

For any homology class $A\in H_2(X,\mathbb Z)$, define
\begin{equation}
 \label{d}d_A = \frac{A^2-K_\omega\cdot A }{2}.
\end{equation}
Denoting the homology class of $V$ by $\mathfrak V\in H_2(X)$, we define
\begin{equation}
\label{l}l_A=A\cdot \mathfrak V.
\end{equation}

Underlying the constructions in the following sections is the symplectic hypersurface $V$.  The choice of symplectic structure $\omega$ and almost complex structure $J$ must preserve the symplecticity of $V$.  The symplectic structure must be chosen such that $\omega$ is an orientation compatible symplectic structure on $X$ and restricts to an orientation compatible symplectic structure on $V$.  Denote the set of such forms by $\mathcal S_X^V$.  The set of classes $[\omega]\in H^2(X)$ with this property is called the relative symplectic cone $\mathcal C_X^V$ (see \cite{DL}).  

The almost complex structure $J$ must be chosen such that $V$ is a $J$-holomorphic submanifold.
 
\begin{definition}Let $\omega\in \mathcal S_X^V$.  The set $\mathcal J_V\subset \mathcal J_\omega$ is defined to be the set of almost complex structures $J$ making $V$ pseudoholomorphic. 
\end{definition}

This set of almost complex structures will form the basis for our calculations throughout this paper.  That it is rich enough to allow for deformations of pseudoholomorphic curves will be shown in Section \ref{generic}.  We will also encounter the set $\mathcal J_V\times [\mathcal I_A]$; this product we endow with the product topology.  

\subsection{\label{contactorder}Contact Order}

For any given $J\in\mathcal J_V$, the contact order between a $J$-holomorphic submanifold $C$ and the fixed hypersurface $V$ is given as follows (see \cite{M4} and Lemma 3.4, \cite{IP4} for details):  Both submanifolds can be viewed as $J$-holomorphic curves for the given almost complex structure $J\in \mathcal J_V$.  Let $ f:\Sigma\rightarrow C$ be a simple $J$-holomorphic map from a genus $g=g([C])$ Riemann surface to $X$ having as its image the curve $C$.  The genus $g([C])$ is given by the adjunction formula for the curve $C$.  Consider a point of intersection $p\in V$ of $C$ and $V$.    Fix local coordinates $\{vî\}$ in $V$ and let $x$ be local coordinate in normal direction.  Then either $f(\Sigma)\subset V$ or there is an integer $s>0$ and $a_0\in \mathbb C$ such that
\begin{equation}\label{contact}
f(z,\overline z)=(p+ O(\vert z\vert), a_0z^s+O(\vert z\vert ^{s+1})).
\end{equation}     

\begin{lemma}
Assume that $C\not\subset V$ and let $f:\Sigma\rightarrow X$ and $f':\Sigma'\rightarrow X$ be two simple $J$-holomorphic embeddings of $C$.  Then $s=s'$.  
\end{lemma}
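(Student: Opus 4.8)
The plan is to show that the integer $s$ is a reparametrization invariant of the pair $(C,V)$ at the intersection point $p$, so that any two simple parametrizations must record the same value. First I would observe that since both $f:\Sigma\to X$ and $f':\Sigma'\to X$ are embeddings with common image $C$, each restricts to a diffeomorphism onto $C$; hence $\phi:=(f')^{-1}\circ f:\Sigma\to\Sigma'$ is a well-defined diffeomorphism carrying $w:=f^{-1}(p)$ to $w':=(f')^{-1}(p)$. Writing $f=f'\circ\phi$ and differentiating gives $df=df'\circ d\phi$.

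Next I would verify that $\phi$ is in fact a biholomorphism. Using that $f$ and $f'$ are $J$-holomorphic, i.e. $J\circ df=df\circ j$ and $J\circ df'=df'\circ j'$ for the domain complex structures $j,j'$, one computes
\[
df'\circ d\phi\circ j = df\circ j = J\circ df = J\circ df'\circ d\phi = df'\circ j'\circ d\phi .
\]
Since $f'$ is an embedding, $df'$ is injective, so $d\phi\circ j=j'\circ d\phi$; that is, $\phi$ is $(j,j')$-holomorphic, and being a diffeomorphism it is a biholomorphism. In particular, in holomorphic coordinates $z$ near $w$ and $z'$ near $w'$ we have $z'=\phi(z)=cz+O(|z|^2)$ with $c\neq 0$.

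With this in hand the conclusion is immediate. Around $p$ fix the coordinates $\{v\}$ on $V$ and normal coordinate $x$ as in $(\ref{contact})$; these belong to $X$ and are shared by both parametrizations. The normal component $x\circ f$ vanishes to order $s$ in $z$, while $x\circ f'$ vanishes to order $s'$ in $z'$. Substituting $z'=cz+O(|z|^2)$ into $x\circ f'=a_0'(z')^{s'}+O(|z'|^{s'+1})$ yields $x\circ f=a_0'c^{s'}z^{s'}+O(|z|^{s'+1})$, and comparison with $a_0 z^s+O(|z|^{s+1})$ forces $s=s'$ (and incidentally $a_0=a_0'c^{s'}$). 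The point is simply that a biholomorphism with nonvanishing derivative preserves the vanishing order of a function.

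The main obstacle — and the only genuinely nontrivial step — is establishing the holomorphicity of the reparametrization $\phi$; once that is in place the matching of expansions is purely formal. If one wished to bypass even this, an alternative is to identify $s$ with the local intersection multiplicity of the two $J$-holomorphic curves $C$ and $V$ at $p$, which by positivity of intersections is intrinsic to the geometric configuration and independent of any parametrization. I would also note that the hypothesis $C\not\subset V$ is exactly what guarantees the expansion $(\ref{contact})$ has a finite leading exponent $s$, so that the statement is meaningful.
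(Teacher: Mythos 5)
Your proof is correct and follows essentially the same route as the paper: obtain a holomorphic reparametrization $\phi$ with $f=f'\circ\phi$, expand $\phi(z)=cz+O(|z|^2)$ with $c\neq 0$, and compare the normal-component expansions from (\ref{contact}) to force $s=s'$. The only difference is that the paper cites Cor.~2.5.3 of \cite{McS} (which covers simple, not necessarily embedded, maps) to produce $\phi$, whereas you exploit the embedding hypothesis to build $\phi=(f')^{-1}\circ f$ directly and verify its holomorphicity by the chain-rule computation $df'\circ d\phi\circ j=df'\circ j'\circ d\phi$ plus injectivity of $df'$ --- a self-contained and perfectly valid substitute.
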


\begin{proof}  The maps $f$ and $f'$ have the same image, hence by Cor. 2.5.3, \cite{McS}, there exists a holomorphic map $\phi:\Sigma\rightarrow \Sigma'$ such that $f=f'\circ \phi$.  Inserting this relation into Eq. \ref{contact}, we obtain the fact that $\phi(z)$ must vanish to order 1 at $z=0$.  A series expansion of the holomorphic map $\phi$ in local coordinates as given above, shows that it has leading term $cz$, $c\in\mathbb C$.  Moreover, comparing the second terms from Eq. \ref{contact}, provides $a_0'\phi(z)^{s'} + O(\vert \phi(z)\vert ^{s'+1}))=a_0z^s+O(\vert z\vert ^{s+1}))$ which implies that the leading term in $\phi^{s'}$ must match the leading term $a_0z^s$.  This implies $s=s'$.

\end{proof}
 
\begin{definition}\label{contord}
The contact order of $C$ and $V$ at $p\in V$ for $C\not\subset V$ is defined to be $s$.
\end{definition}

This definition is very intuitive and fits nicely the standard picture of contact order.  However, as seen in the previous Section, to define an invariant, we need to consider an evaluation mapping on sections of the normal bundle to a fixed embedded curve.  In particular, we need to define contact order for sections of the normal bundle $N$.  For that reason, we will adopt the picture presented in \cite{LR}.  We describe the corresponding construction briefly and then show how to obtain an "evaluation " mapping from this picture.

Consider the normal bundle $N$ of $C$.  Denote the boundary of the bundle by $\partial N$.  With a view towards the embedding of the disk bundle $U$ constructed by Taubes, we can associate to the boundary a $S^1$ action, such that $\partial N/S^1=V$.  Let $x(t)$ denote an orbit of the $S^1$ action and call $x(kt)$ a $k$-periodic orbit for any integer $k$.  Remove $V$ from $X$, the resulting "punctured" manifold can be viewed locally in the neighborhood of the removed hypersurface $V$ as $\mathbb R\times \partial N$.  Any $J$-holomorphic curve $u:\Sigma\rightarrow X$ which contacts $V$ can be viewed locally in this picture, and in local coordinates can be written as $u:\Sigma_p\rightarrow \mathbb R \times \partial N$ with $u=(a,u_V)$ from a punctured Riemann surface $\Sigma_p$.  Li-Ruan showed, that as we approach the contact point with $V$,  $u_V\rightarrow x(kt)$ for some $k$-periodic orbit, whereby $k$ is the contact order of $u$ as described previously.  Let $S_k$ denote the space of $k$-periodic orbits, note that we can identify $S_k$ with $V$.

Let $N_V$ be the normal bundle of the hypersurface $V$ and consider a single intersection point $z$ of the curve $C$ and $V$.  In a neighborhood of the point $z$ trivialize the bundle $N_V$.  On the trivialization $U\times F^V_z$ introduce the coordinates $v$ and $x$ as before Eq. \ref{contact}.  Consider the fiber $F^C_z$ of the bundle $N$ over the point $z\in C$.  A section $\mathfrak s\in \Gamma(N)$ intersects $F_z^C$ at a point which we can also identify as belonging to $F_z^V$ over a point $z'\in U$ in the trivialization.  Thus, in local coordinates $(v,x)$ at the intersection point, we can assign a contact order to the section $\mathfrak s$ as given by Def \ref{contord}.  Moreover, by removing the intersection point $\mathfrak s\cap F_z^C$, we can apply the orbit construction and assign to the section a $k$-periodic orbit with the same contact order as the curve $C$.  Thus we define a map 
\begin{equation}\label{contactmap}
G_k:\Gamma(N)\rightarrow S_k
\end{equation}
 for fixed $k$ determined by $C$.  This will be used to define an evaluation map needed to define the relative invariant in Section \ref{relinv}.

\subsection{Initial Data}

A submanifold $C\subset X$ will be constrained by two types of data:  First, we fix a set of geometric objects, i.e. points, curves, etc., which the submanifold must contact.  Secondly, at each of the contact points with the fixed hypersurface $V$, we prescribe as well the contact order $s\in\mathbb N$ of the two submanifolds.  This contact order will be defined more precisely in \ref{contactorder}.  We collect them in the initial data $\mathcal I_A$:

\begin{definition}  The initial data ${\mathcal I}$ is defined as follows:  Fix integers $d,l\in \mathbb Z$.  For $d , l\ge 0$, choose decompositions $d_1+d_2=d$ and $l_1+l_2+l_3=l$ with $d_*,l_*\ge 0$.  Then $\mathcal I$ consists of the following sets:

\begin{enumerate}
\item  $\Omega_{d_1}\subset X\backslash V$ consisting of $d_1$ distinct points,
\item $\Omega_{l_1}$ a collection of $l_1$ pairs $(x,s)\in V\times {\mathbb N}$ with all $x$ distinct,
\item  $\Gamma_{d_2}$ a collection of $d_2$ disjoint 1-dimensional submanifolds of $X\backslash V$, 
\item $\Gamma_ {l_2}$ a collection of $l_2$ pairs $(\gamma,s)$ with $\gamma$ a  1-dimensional submanifold of $V$ and $s\in {\mathbb N}$, all $\gamma$ pairwise disjoint,
\item $\Upsilon=\Upsilon_V^{l_3}$ a collection of $l_3$ pairs $(V,s)$ of copies of the hypersurface $V$ and $s\in\mathbb N$.  \end{enumerate}

If $d_*=0$ or $l_*=0$, then the respective sets are the empty set.  If $d< 0$, then $\mathcal I=\emptyset$.  If $l< 0$, then $\Omega_{l_1}=\Gamma_{l_2}=\Upsilon=\emptyset$.

For a class $A\in H_2(X)$, we denote by $\mathcal I_A$ any set of initial data $\mathcal I$ with $d=d_A$ and $l=l_A$.

\end{definition}

The collection $\Upsilon$ represents  insertions which offer no constraint on the intersection point of the submanifold and $V$ other than a certain contact order at the intersection point.

\begin{definition}
Initial data $\mathcal I_A$ will be called proper initial data for the class $A$ if the following hold:
\begin{itemize}
\item If $d_A\ge 0$, then
\begin{enumerate}
\item $0\le d_1\le d_A$, $0\le d_2\le 2d_A$ and $2d_1+d_2\le 2d_A$,
 \item $2d_1+d_2-l_2-2l_3=2(d_A-l_A) $ and
\item $A\cdot \mathfrak V=\sum s_i$
\end{enumerate}
\item If $d_A < 0$, then $\mathcal I_A=\emptyset$. 
\end{itemize}
\end{definition}

This definition is motivated by viewing these sets as the analogue to cohomological insertions in classical Gromov-Witten theory.  A dimension count shows that we cannot expect to have any submanifolds in class $A$ satisfying the initial data $\mathcal I_A$ unless $2d_1+d_2-l_2-2l_3=2(d_A-l_A)$.  Note further, that the condition imposed on the orders $s_i$ constrains the value of $l_1$:  $l_1+l_2+l_3\le A\cdot \mathfrak V=l_A$.  Moreover, this condition also ensures that if $A\cdot \mathfrak V>0$, then some $l_*$ will be nonzero.  The following observation will simplify the multiply toroidal case: 

\begin{lemma}\label{values}Let  $d_A=0$. The initial data $\mathcal I_A$ is proper if and only if
\begin{enumerate}
\item $d_1=d_2=l_1=l_2=0$
\item $l_3=A\cdot \mathfrak V$ and all values of $s_i=1$.
\end{enumerate}
\end{lemma}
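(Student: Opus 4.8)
The plan is to translate the three properness conditions (for the case $d_A\ge 0$) into a small system of linear relations and inequalities among the nonnegative integers $d_1,d_2,l_1,l_2,l_3$ and the prescribed contact orders $s_i$, specialize everything to $d_A=0$, and solve. I would keep in mind throughout that by the very definition of the initial data one already has $d_1+d_2=d=d_A$ and $l_1+l_2+l_3=l=l_A=A\cdot\mathfrak V$, with all these quantities nonnegative. The first, immediate step is to dispose of $d_1$ and $d_2$: since $d_A=0$, properness condition $(1)$ reads $0\le d_1\le 0$ and $0\le d_2\le 0$, so $d_1=d_2=0$ (equivalently, this is forced by $d_1+d_2=0$ with both summands nonnegative).

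Next I would pin down the contact orders. There are exactly $l_1+l_2+l_3$ prescribed contact conditions, one for each pair listed in $\Omega_{l_1}$, $\Gamma_{l_2}$ and $\Upsilon_V^{l_3}$, and each such pair carries an order $s_i\in\mathbb N$, hence $s_i\ge 1$. Consequently $\sum s_i\ge l_1+l_2+l_3=l_A$. But properness condition $(3)$ asserts $\sum s_i=A\cdot\mathfrak V=l_A$, so equality holds in $\sum s_i\ge l_1+l_2+l_3$ while every $s_i\ge 1$; this forces every $s_i=1$.

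Finally I would solve for the $l_*$. With $d_1=d_2=0$ and $d_A=0$, properness condition $(2)$ collapses to $l_2+2l_3=2l_A$. Subtracting the defining relation $l_1+l_2+l_3=l_A$ yields $l_3-l_1=l_A$, i.e. $l_3=l_1+l_A\ge l_A$; since also $l_3\le l_1+l_2+l_3=l_A$, we conclude $l_3=l_A=A\cdot\mathfrak V$ and $l_1=0$, whence $l_2=0$ from the sum. This gives the forward direction. For the converse I would simply substitute $d_1=d_2=l_1=l_2=0$, $l_3=A\cdot\mathfrak V=l_A$ and all $s_i=1$ back into the three conditions and verify them directly: condition $(1)$ becomes $0\le 0\le 0$ in each inequality, condition $(2)$ becomes $0-0-2l_A=2(0-l_A)$, and condition $(3)$ becomes $\sum_{i=1}^{l_A}1=l_A=A\cdot\mathfrak V$; all hold.

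Since each step is an elementary manipulation of linear relations among nonnegative integers, there is no analytic obstacle here. The only point demanding care is the bookkeeping in condition $(3)$: one must recognize that $\sum s_i$ runs over all $l_1+l_2+l_3$ contact pairs, so that $s_i\ge 1$ produces the lower bound $\sum s_i\ge l_1+l_2+l_3$ that simultaneously forces $s_i=1$ and, in tandem with condition $(2)$, removes all remaining freedom in $(l_1,l_2,l_3)$.
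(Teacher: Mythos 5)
Your strategy---turning the properness conditions into linear relations among $d_*$, $l_*$, $s_i$ and solving---is the same as the paper's, and both your treatment of $d_1=d_2=0$ and your verification of the converse are fine. The gap is your repeated use of the identity $l_1+l_2+l_3=l_A$. Although the definition of $\mathcal I_A$ literally reads ``$l=l_A$'', that equality cannot be the operative constraint: combined with properness condition (3) ($\sum s_i=A\cdot\mathfrak V=l_A$, one order $s_i\ge 1$ per pair) it would force every $s_i=1$ for \emph{every} proper initial data set, for every class with $d_A\ge 0$, trivializing the notion of prescribed contact order on which the whole paper rests (e.g.\ a single tangency of order $3$ for a class with $A\cdot\mathfrak V=3$ would be inadmissible initial data). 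What the paper actually has, and uses, is only the inequality $l_1+l_2+l_3\le l_A$, itself a \emph{consequence} of condition (3) together with $s_i\ge 1$: this is precisely the remark following the definition of proper data, it reappears in the universal model (where the contact vector $s$ has length $l\le l_A$), and it is the inequality invoked in the paper's own proof. With only this inequality available, both of your key steps fail as written: the squeeze $\sum s_i\ge l_1+l_2+l_3=l_A$ no longer pins down $s_i=1$, and there is no equality to subtract from $l_2+2l_3=2l_A$.

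The repair is to reverse the order of the deductions, which is exactly what the paper does: condition (2) with $d_1=d_2=d_A=0$ gives $l_2+2l_3=2l_A$, and combining this with $2(l_1+l_2+l_3)\le 2l_A=l_2+2l_3$ yields $2l_1+l_2\le 0$, hence $l_1=l_2=0$ and then $l_3=l_A$ from $l_2+2l_3=2l_A$. Only at this point does condition (3), now reading $\sum_{i=1}^{l_3}s_i=l_A=l_3$ with each $s_i\ge 1$, force all $s_i=1$. So your conclusion and your converse stand, but the forward direction must \emph{derive} the count of contact points rather than assume it.
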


\begin{proof}If $d_1=d_2=l_1=l_2=0$ and $l_3=A\cdot \mathfrak V=l_A$, then the first two conditions for proper hold.  Moreover, $s_i=1$ and $l_3=l_A$ imply $A\cdot \mathfrak V=\sum s_i$.

The definition of proper shows that $d_A=0$ is equivalent to $d_1=d_2=0$.  Furthermore, we obtain that 
\[
-l_2-2l_3=-2l_A\le -2l_1-2l_2-2l_3\;\Rightarrow \;-l_2\ge 2l_1\;\Rightarrow\;l_1=l_2=0
\]
and thus $l_3=l_A$.  Then $s_i=1$ follows from $A\cdot \mathfrak V=\sum_{i=1}^{l_3} s_i$.

\end{proof}

This result can be reformulated as follows: If $d_A=0$, then for proper initial data we have $s_i=1$ for all contacts, the only contacts being allowed are such, that no constraints are imposed on a curve in class $A$.  However, a curve with $A\cdot \mathfrak V\ne 0$ (topological intersection number!) will generically intersect the hypersurface $V$ in exactly $l_A$ points each with order 1.  Hence this result simply states that a curve which has no restriction $d_A$ will not allow any restrictions in its contact with $V$.

Our constructions will involve operators on a universal space parametrized by almost complex structures and initial data.  We will only be interested in initial data which varies in a specified manner.  This is made precise in the following definition:

\begin{definition}  Fix a set of initial data $\mathcal I_A$.  Define the class $[\mathcal I_A]$ as the set of initial data with the following properties:
\begin{enumerate}
\item $\mathcal I_A\in[\mathcal I_A]$
\item All initial data have the same values of $d_*,l_*$ and $s_*$.
\item Corresponding elements in the sets $\Omega_*$, $\Gamma_*$ and $\Upsilon$ have the same values of $s_i$ and lie in the same homology class.
\item The ordering of the elements in the $\Gamma$-sets of any initial data is the same as in the given set $\mathcal I_A$.  
\end{enumerate}

If $\mathcal I_A$ is proper, then the class $[\mathcal I_A]$ is called proper.
\end{definition}

Note that it is not necessary to give an initial set $\mathcal I_A$.  Simply fixing the values of $d_*,l_*$ and $s_*$ as well as the relevant homology classes and an ordering will define a class $[\mathcal I_A]$.  We will consider data from this viewpoint.

\subsection{Non-Degenerate Submanifolds}

Consider an embedded submanifold $C\subset X$ together with an integer $m$ and fix $J\in \mathcal J_V$.  We can view the normal bundle $N$ of $C$ in $X$ as a complex bundle with complex structure induced by the almost complex structure on $X$.  On the other hand, we can construct a disk bundle $U$ over $C$ with complex structure $J_0$ induced from the restriction of the almost complex structure on $X$ to $C$ by setting $J_0\vert_{\mbox{fiber F}}=J_{\pi(F)}$ with $\pi:U\rightarrow C$.  Taubes has constructed an embedding of this disk bundle $U$ into the normal bundle $N$ which is uniquely associated to the submanifold $C$ and which allows for a comparison of the two complex structures.  This leads to two complex valued sections $\nu \in T^{0,1}C$ and $\mu \in T^{0,1}C\otimes N^{\otimes 2}$ associated to $C$.  These define an operator
\begin{equation}
\label{D}Ds=\overline\partial s + \nu s+\mu \overline s
\end{equation}
which is a compact perturbation of the standard $\bar\partial$ operator (as defined by the complex structures on the domain and target) and hence is elliptic.   Moreover, this operator is canonically associated to the submanifold $C$.  Its kernel can be viewed as the tangent space to the pseudo-holomorphic embeddings of $C$ into $X$ in the space of all smooth embeddings of $C$ in $X$.  This leads to  the definition of non-degenerate:

\begin{definition}\label{nondeg}Fix an almost complex structure $J\in \mathcal J_V$ and a class $A\in H_2(X)$ with $d_A\ge 0$.  Choose initial data $\mathcal I_A$.  Let  $C$ be a connected, $J$-holomorphic submanifold in $X$ such that $\mathcal I_A\subset C$.  $C$ is non-degenerate if the following hold: 
\begin{enumerate}
\item If $d_A=0$, then cokernel$(D)=\{0\}$.
\item If $d_A>0$, then $D\oplus ev_{\mathcal I_A}$ has trivial cokernel.  Here $ev_{\mathcal I_A}$ is the evaluation map which takes a smooth section of $N$ to its value over the data in $\mathcal I_A$. 
\end{enumerate}

\end{definition}

\subsubsection{Non-Degeneracy for classes with $d_A<0$} 

The definition of non - degeneracy as given in Def \ref{nondeg} allows only for classes with $d_A\ge 0$.  In the relative setting, we must allow for a further case:  Assume the hypersurface $V$ is non-stable, i.e. the class $\mathfrak V$ has $d_\mathfrak V<0$.  Then we could still choose $A=\mathfrak V$ and consider curves in this class relative to $V$.  Definition \ref{nondeg} can be extended to include this case as follows:non-stable, i.e. the class $\mathfrak V$ has $d_\mathfrak V<0$.  Then we could still choose $A=\mathfrak V$ and consider curves in this class relative to $V$.  Definition \ref{nondeg} can be extended to include this case as follows:

\begin{definition}\label{nondegng}Fix an almost complex structure $J\in\mathcal J_V$.  If $d_A<0$, then a submanifold $C$ representing the class $A\in H_2(X)$ is called non-degenerate if it is rigid and there exist no other $J$-holomorphic curves $C'$ in the class $A$.

\end{definition}

We will show, that this only applies in the setting mentioned above.  If $A\ne \mathfrak V$, then we will not be able to find a curve $C$ in the class $A$ for generic almost complex structures $J$.  Note also, that in the relative setting, we allow only almost complex structures in $\mathcal J_V$ for the definition of nondegeneracy.

\subsection{\label{spaces}The Space of Relative Submanifolds}

We now introduce the space of relative submanifolds ${\mathcal K}_V(A,J,{\mathcal I_A})$ for non multiply toroidal classes $A$, i.e. classes which are not of the form $A=mT$ for some $T$ with $T^2=0$ and $K_\omega\cdot T=0$.  This definition will be rather technical, however the general idea is simple:  We want to consider all connected submanifolds $C$, which contact $V$ in a very controlled manner.  This is determined by the initial data $\mathcal I_A$ and we ensure that we contact $V$ only once for every given geometric object with the required contact order.  Moreover, the curve $C$ shall meet each geometric object in the initial data $\mathcal I_A$.  We make this precise in the following definition:

\begin{definition}\label{defK}Fix $A\in H_2(X)$ and a set of proper initial data $\mathcal I_A$.  Assume that $A$ is not multiply toroidal.  Choose an almost complex structure $J\in\mathcal J_V$.  Denote the set $\mathcal K_V= {\mathcal K}_V(A,J,\mathcal I_A)$ of connected $J$-holomorphic submanifolds $C\subset X$ which satisfy

\begin{itemize}
 
\item If $d_{A}>0$, then $C$  
\begin{enumerate}
\item  contains $\Omega_{d_1}$ and
\item  intersects each member of $\Gamma^X$ exactly once.
\end{enumerate}
\item If $l_{A}=0$, then $C\cap V=\emptyset$.
\item If $l_{A}>0$, then $C$
\begin{enumerate}
\item intersects $V$ locally positively and transversely,
\item intersects $V$ at precisely the $l_1$ points of $\Omega_{l}$ and
\item  intersects each member of $\Gamma^V$ exactly once.  
\item The remaining $l_3$ intersections with $V$ are unconstrained.
\item Each intersection is of order $s_i$ given in the initial data $\mathcal I_A$ for this component. 
\end{enumerate}
\end{itemize}
\end{definition}

\subsection{\label{mapinto}Convergence of Relative Submanifolds}  

We are interested primarily in curves which genuinely intersect $V$, transversely and locally positively.  Moreover, we will eventually prove a sum formula similar to results in \cite{LR} or \cite{IP5}.  This will be simplified considerably by the exclusion of components in $V$.

{\bf Convergence Behavior.} In order to define a relative invariant, we will need to understand the compactness properties of the relative spaces $\mathcal K$.  This will be done in Chapter \ref{structure}.  In order to simplify calculations in these sections, we consider here the behavior of curves descending into $V$ under convergence and determine their index.  

Any sequence of symplectic submanifolds will converge to a limit curve by Gromov compactness.  However, this limit curve may have components mapping into $V$.  We now describe how to handle such curves, this is described in detail in \cite{McD1} and \cite{HLR} (see also \cite{LR} and \cite{Li1}, \cite{McD1} contains numerous examples of this construction).  

The idea is to extend the manifold $X$ in such a manner, that the components descending into $V$ get stretched out and become discernible.  This extension is achieved by gluing $X$ along $V$ to the projective completion of the normal bundle $N_V$.  This completion is denoted by $Q=\mathbb P(N_V\oplus \mathbb C)$ and it comes with a natural fiberwise $\mathbb C^*$ action.  The ruled surface $Q$ contains two sections, the zero section $V_0$, which has opposite orientation to $V$, and the infinity section $V_\infty$, which is a copy of $V$ with the same orientation both of which are preserved by the $\mathbb C^*$ action.  The manifold $X\#_{V=V_0}Q$ is symplectomorphic to $X$ and can be viewed as a stretching of the neighborhood of $V$.  This stretching can be done any finite number of times.  Therefore consider the singular manifold $X_m=X\sqcup_{V=V_0}Q_1\sqcup_{V_\infty=V_0}...\sqcup_{V_\infty=V_0}Q_m$ which as been stretched $m$ times.  This will provide the target for the preglued submanifolds which we now describe.  

Any curve in $X$ with components lying in $V$ can be viewed as a submanifold in $X_m$ consisting of a number components:  Each such curve has levels $C_i$ which lie in $Q_i$ (denoting $X=Q_0$) and which must satisfy a number of contact conditions.  $C_i$ and $C_{i+1}$ contact along $V$ in their respective components $(Q_i,V_\infty)$ and $(Q_{i+1},V_0)$ such that contact orders and contact points match up.  The imposed contact conditions on $V$ from the initial data $\mathcal I_A$ are imposed on the level $C_m$ where it contacts $V_\infty$ of $Q_m$ whereas the absolute data is imposed on $C_0$.  

The submanifolds $C_i$, viewed as maps into $X_m$, must satisfy certain stability conditions.  In $X$, these are the well known standard conditions on the finiteness of the automorphism group.  For those mapping into $Q_i$, $i>0$, we identify any two submanifolds which can be mapped onto each other by the $\mathbb C^*$ action of $Q$.

After constructing $\{C_i\}$ in $X_m$ we obtain a genuine curve in $X$ meeting $V$ as prescribed by $\mathcal I_A$ by gluing along $V$ in each level.  If each level was embedded, then so will the glued curve be.  To show that each level remains embedded, i.e. we obtain no nodes away from the sections in $Q$, will be part of the task of the later sections. 

The homology class of the preglued curve is defined as the sum of the homology class of $C_0$ and the projections of the class of $C_i$ into $H_2(V)$.

For each such curve $C$, it is possible to determine the index of the associated differential operator.  This is of course of central importance when determining the dimension of the spaces $\mathcal R$.  The following Lemma sums up the result.

\begin{lemma}\label{index}(Lemma 7.6, \cite{IP4}; \cite{LR})  Let $C$ be a preglued submanifold with $m+1$ levels representing the class $A\in H_2(X)$ which meets the data in $\mathcal I_A$ in the prolongation $X_m$ as described above.  Then the index of $C$ is
\begin{equation}
ind(C)= d_A-m.
\end{equation}
\end{lemma}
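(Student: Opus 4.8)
The plan is to compute the index of the preglued curve level by level, using the index formula for each piece and the matching conditions along the gluing hypersurfaces $V$, then sum. First I would recall that for a single curve in an absolute setting, the (real) index of the associated $\dbar$-type operator $D$ is given by the Riemann-Roch computation; for an embedded curve in class $A$ in a symplectic 4-manifold this reproduces $2d_A$ (over $\mathbb{R}$), reflecting the identification of $\ker D$ with the tangent space to the space of pseudoholomorphic embeddings. The central bookkeeping device will be to track how the contact conditions at each juncture $V_\infty$ of $Q_i$ glued to $V_0$ of $Q_{i+1}$ consume degrees of freedom.

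The key steps, in order: (1) Write the total index as a sum of the indices of the individual levels $C_0, C_1, \ldots, C_m$, where $C_0 \subset X = Q_0$ carries the absolute homology data and $C_m$ carries the relative contact data from $\mathcal{I}_A$ against $V_\infty$ of $Q_m$. (2) For each level, apply the relative Riemann-Roch formula (this is precisely the content of Lemma 7.6 of \cite{IP4}), which expresses the index of a curve with prescribed contact orders $s_i$ along $V_0$ and $V_\infty$ in terms of the first Chern number, the genus, and a correction term $-\sum(s_i - 1)$ or similar from the tangency conditions. (3) Invoke the matching condition: at each interface the contact orders and contact points of $C_i$ along $V_\infty$ must agree with those of $C_{i+1}$ along $V_0$, so that each gluing identifies a pair of boundary evaluations and cuts the naive sum by the dimension of $V$ (i.e. $2$ real dimensions, equivalently one complex dimension) per gluing node. (4) Because the homology class is preserved (the class of the preglued curve is the sum of the class of $C_0$ and the $H_2(V)$-projections of the $C_i$), the total first Chern number and self-intersection assemble to those of $A$, so that the leading term reconstitutes $d_A$. (5) Count the $m$ gluing interfaces: each stretching introduces exactly one extra level and one extra matching constraint, and the net effect of passing from $m$ levels to $m+1$ levels is to decrease the index by exactly $1$, yielding the stated $\operatorname{ind}(C) = d_A - m$.

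The main obstacle, and the step deserving the most care, is step (3)–(5): correctly accounting for the arithmetic of the gluing. One must verify that the $\mathbb{C}^*$-quotient imposed on the levels $C_i$ for $i > 0$ (identifying curves related by the fiberwise $\mathbb{C}^*$ action of $Q$) reduces each intermediate level's contribution by the right amount, and that the contact-order matching along $V$ is counted once, not twice, at each seam. The cleanest route is to set up a Mayer–Vietoris / excision comparison of the linearized operators: the index is additive under the decomposition into levels up to the dimension of the kernel/cokernel of the boundary evaluation operators along each $V$, and the $\mathbb{C}^*$-equivalence precisely balances the extra real dimension $2$ of each copy of $V$ against the $2$-dimensional space $H^0$ of the normal direction in $Q_i$. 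I would therefore set the whole computation up so that the telescoping sum collapses, leaving the single correction $-m$; one then checks the base case $m=0$, where there are no interior gluings and the formula recovers the absolute index $\operatorname{ind}(C) = d_A$, which anchors the induction. Since this is exactly the statement of Lemma 7.6 of \cite{IP4} (and implicitly \cite{LR}), I would ultimately defer the detailed charge-counting to those references and present the argument as an application of their index formula to the present geometric setup, verifying only that our homology and contact conventions match theirs.
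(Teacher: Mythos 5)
The paper offers no proof of this lemma at all: it is stated purely as a citation of Lemma 7.6 of \cite{IP4} and of \cite{LR}, so there is no internal argument to compare against. Your proposal, which ends by deferring the detailed charge-counting to exactly those references after checking that the conventions match, is therefore entirely consistent with the paper's treatment, and the sketch you give of what lies behind the citation has the right architecture: per-level relative Riemann--Roch, matching conditions at the interface nodes, $\mathbb{C}^*$ quotients on the intermediate levels, and a telescoping sum anchored by the base case $m=0$ giving $\mathrm{ind}(C)=d_A$.

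One clarification worth making if you ever write out the charge-counting in full: the net loss of $1$ per extra level does not come from the matching constraints themselves. Each matched node contributes $+1$ (complex) to the relative index of the level above it (a free contact point along its $V_0$) and is then cut by $-1$ by the diagonal condition in $V\times V$, so the node contributions cancel identically, and the leading terms reassemble into $d_A$ minus the contact corrections already encoded in $\mathcal I_A$. The surviving $-1$ per level is accounted for entirely by the fiberwise $\mathbb{C}^*$ action by which the paper identifies curves in each $Q_i$, $i\ge 1$. Your step (3)--(5) discussion, with its talk of the $\mathbb{C}^*$-equivalence ``balancing'' the dimension of $V$ against an $H^0$ of the normal direction, conflates these two cancellations; the bookkeeping is cleaner, and the telescoping is automatic, once the node terms are seen to cancel on their own.
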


This result shows, that for a generic choice of almost complex structure and initial data, we do not expect to have curves admitting higher levels. 

\begin{remark}
In the calculation of the invariants defined in the later sections of this paper, as well as in Gromov-Witten theory, it is not always convenient to use a generic choice of almost complex structure or initial data.  For this reason, it would be of use to be able to exclude higher level curves for all pairs $(J,\mathcal I_A)$.  A simple method for ensuring no component maps non-trivially into $V$ is to consider classes such that the genus $g(A)$ given by the adjunction equality satisfies 
\[
g(V)>g(A).
\]

Further, one could try to use the following result in place of the condition on $A$:

\begin{lemma}\label{wsup}
Let $(X,\omega)$ be a symplectic 4-manifold, $V$ a symplectic hypersurface.  Let $A\in H_2(X)$ and assume that for some pair $(J,\mathcal I_A)$ the set $\mathcal R_V(A,J,\mathcal I_A)\ne\emptyset$.  Assume further that $A\cdot\mathfrak V> A^2\ge 0$.  Then there exists a symplectic form $\tilde \omega\in \mathcal S_X^V$ such that $[\tilde\omega]\cdot \mathfrak V> [\tilde\omega]\cdot A$.  Moreover, $\tilde\omega$ is a smooth deformation of $\omega$ through symplectic forms.
\end{lemma}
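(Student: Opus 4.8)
The plan is to produce $\tilde\omega$ by symplectic inflation along a $J$-holomorphic representative of the class $A$, exploiting the hypothesis $A\cdot\mathfrak V>A^2$ to force the $\mathfrak V$-area to overtake the $A$-area along the inflation. First I would fix a pair $(J,\mathcal I_A)$ as in the hypothesis with $J\in\mathcal J_V$, so that $V$ and an embedded $J$-holomorphic configuration $Z$ representing $A$ are simultaneously pseudoholomorphic while $\omega\in\mathcal S_X^V$ tames $J$. Since $A^2\ge 0$, the symplectic inflation lemma (McDuff, Li--Liu) supplies a closed $2$-form $\rho$, Poincar\'e dual to $A$, supported in a small neighborhood of $Z$ and nonnegative on every $J$-complex line, such that $\omega_t:=\omega+t\rho$ is symplectic and tames $J$ for all $t\ge 0$, with $[\omega_t]=[\omega]+t\,\mathrm{PD}(A)$.

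Next I would run the cohomological computation. Pairing with $\mathfrak V$ and $A$, and using $\mathrm{PD}(A)\cdot\mathfrak V=A\cdot\mathfrak V$ and $\mathrm{PD}(A)\cdot A=A^2$, gives
\[
[\omega_t]\cdot\mathfrak V-[\omega_t]\cdot A
=\big([\omega]\cdot\mathfrak V-[\omega]\cdot A\big)+t\,\big(A\cdot\mathfrak V-A^2\big).
\]
By hypothesis $A\cdot\mathfrak V-A^2>0$, so this quantity strictly increases in $t$ and becomes positive once $t>t_0:=\max\{0,\,([\omega]\cdot A-[\omega]\cdot\mathfrak V)/(A\cdot\mathfrak V-A^2)\}$. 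Choosing any $t_1>t_0$ and setting $\tilde\omega:=\omega_{t_1}$ then delivers $[\tilde\omega]\cdot\mathfrak V>[\tilde\omega]\cdot A$.

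It then remains to verify $\omega_t\in\mathcal S_X^V$ and the deformation claim. Because $V$ is $J$-holomorphic, each tangent plane $T_pV$ is a $J$-complex line, so $\rho|_V\ge 0$ with respect to the complex orientation of $V$; hence $\omega_t|_V=\omega|_V+t\,\rho|_V$ is a positive area form on $V$ for every $t\ge 0$, and $V$ stays symplectic with its given orientation. Since each $\omega_t$ is symplectic on $X$ and is connected to $\omega$ through symplectic forms, it induces the correct orientation on $X$, so $\omega_t\in\mathcal S_X^V$ for all $t$, and $\{\omega_t\}_{0\le t\le t_1}$ is the required smooth deformation of $\omega$ within $\mathcal S_X^V$.

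The main obstacle is the inflation step when the representative $Z$ is reducible. By Lemma \ref{neg} any component of negative square is an embedded $(-1)$-sphere $E$, for which $A\cdot E=-m_E<0$; this is incompatible with the existence of a $J$-nonnegative closed form in class $\mathrm{PD}(A)$ (such a form would restrict nonnegatively to the $J$-holomorphic $E$, yet must integrate to $A\cdot E<0$ over $E$), so the inflation cannot be pushed to large $t$ near $E$. For the classes of interest here---$A$ not multiply toroidal---the compactness analysis of Lemma \ref{conv} produces a connected embedded representative, for which this difficulty does not arise and the argument above applies verbatim; treating genuinely reducible configurations carrying $(-1)$-sphere components requires a separate argument and is where the real work would lie.
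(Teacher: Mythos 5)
Your argument is essentially the paper's: the paper likewise inflates along a representative of $A$ furnished by $\mathcal R_V(A,J,\mathcal I_A)\ne\emptyset$, citing Lemma 2.1 of \cite{Bi} to obtain symplectic forms in the classes $t[\omega]+s\,\mathrm{PD}(A)$ ($t>0$, $s\ge 0$) that keep $V$ symplectic (there via the transverse, locally positive intersection of the curve with $V$, in your version via tameness for $J\in\mathcal J_V$), and it concludes with the same linear estimate from $A\cdot\mathfrak V>A^2\ge 0$ by taking $t$ small and $s$ large. The reducible configurations with $(-1)$-sphere components that you flag at the end are not addressed in the paper either --- its proof simply invokes ``a curve $C$ in class $A$'' --- so your write-up is, if anything, more careful on that point.
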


\begin{proof}
This is an application of Lemma 2.1 in \cite{Bi}.  Our assumptions imply the existence of a curve $C$ in class $A$ which intersects $V$ transversally and locally positively in a finite number of points.  Then \cite{Bi} has shown, that there exists a symplectic form in the class $[\omega(t,s)]=t[\omega]+sA$, $(t>0, s\ge 0)$, which makes $V$ symplectic.  In particular, for $t$ small and $s$ large enough we obtain 
\[
[\omega(t,s)]\cdot \mathfrak V > [\omega(t,s)]\cdot A.
\]

\end{proof}

This Lemma implies, that if $A\cdot\mathfrak V> A^2\ge 0$ holds, then we can find a relative symplectic form on $(X,V)$ such that the symplectic area of $V$ is larger than the area of any curve in class $A$.  This then precludes any components of a curve in class $A$ lying in $V$.

\end{remark}

\subsection{Main Result}

The next sections are devoted to the proof of the following Proposition:

\begin{prop} \label{mainprop}Fix a class $A\in H_2(X)$ and a proper class $[\mathcal I_A]$.  Assume $V$ is a symplectic hypersurface and $A$ is not multiply toroidal.  Then there exists a Baire subset of $\mathcal J_V\times [\mathcal I_A]$ such that

\begin{enumerate}
\item The set ${\mathcal K}_V(A,J,{\mathcal I_A})$ is a finite set.
\item If $A\ne \mathfrak V$, then ${\mathcal K}_V(A,J,{\mathcal I_A})$ is empty when $d_A<0$. 
\item If $V$ is an exceptional sphere, then ${\mathcal K}_V(\mathfrak V,J,\emptyset)=\emptyset$
\item Every point $h\in {\mathcal K}_V$ has the property, that each $C$ with is non-degenerate unless possibly if $C$ is a torus with trivial normal bundle.  In this case it is $m$-non-degenerate for all $m>0$.
\item If $(J^1,\mathcal I_A^1)$ are sufficiently close to $(J,\mathcal I_A)$, then the sets ${\mathcal K}_V$ and ${\mathcal K}_V^1$ have the same number of elements.
\end{enumerate}
\end{prop}

{\bf Remark:}   Prop. \ref{mainprop} does not follow immediately from Taubes' results, as the relationship between the sets of generic almost complex structures in ${\mathcal J}_\omega$ and ${\mathcal J}_V$ given in Prop. 7.1, \cite{T1} and Prop. \ref{mainprop} is unclear.

\section{Generic Almost Complex Structures}

In this section we will show that the set $\mathcal J_V$ is rich enough to allow for deformations of embedded symplectic submanifolds.  To do so, we will define a suitable universal space $\mathcal U$ and the set of connected submanifolds $\mathcal K$.  We show that the set $\mathcal K$ can be described as the zero set of a suitable section $\mathcal F$ of a bundle $\mathcal B$ over $\mathcal U$ and that $\mathcal F$ behaves as expected at its zeros.

\subsection{The Universal Model}

Fix $A\in H_2(X)$ and a symplectic form $\omega\in \mathcal S_X^V$.  Let $\Sigma$ be a compact, connected, oriented 2-dimensional surface of genus $g=g(A)$ as defined by the adjunction formula.    Let $J\in \mathcal J_V$ and consider $d_A$ and $l_A$ as defined in \ref{d} and \ref{l} resp.  Let $s\in {\mathbb N}^l$ be a vector of length $l\le l_A$ such that $\sum s_i=A\cdot \mathfrak V$. (Clearly, if $l>l_A$, then we cannot expect under generic conditions to have any submanifolds in class $A$.)  When $d_A$ and $l_A$ are nonnegative, choose initial data ${\mathcal I_A}=(\Omega,\Gamma,\Upsilon)$ using the values in $s$.    Introduce $\mathcal K_V(J,{\mathcal I_A})=\mathcal K$ as the set of $J$-holomorphic submanifolds in $X$ which 
\begin{itemize}
\item are abstractly diffeomorphic to $\Sigma$, 
\item meet the data in $\mathcal I_A$ as described in Def \ref{defR} and 
\item have fundamental class $A$.

\end{itemize}

The space $\mathcal K$ is essentially $\mathcal K_V$, just that we have changed the viewpoint from abstract submanifolds to those diffeomorphic to a fixed $\Sigma$.  For these reasons, a good understanding of the properties of $\mathcal K$ is necessary to prove Prop. \ref{mainprop}.

Let $\mathcal J_V\times [\mathcal I_A]$ be the parameter space for the universal model to be defined below and corresponding to the class of initial data in $\mathcal K_V(J,{\mathcal I_A})$.

A universal space $\mathcal U$ for $\mathcal K_V(J,\mathcal I_A)$ consists of Diff$(\Sigma)$ orbits of a 4-tuple $(i,u,J,\mathcal I_A)$ with 
\begin{enumerate}
\item $u:\Sigma\rightarrow X$ an embedding off a finite set $\mathcal E$ of points from a Riemann surface $\Sigma$ such that $u_*[\Sigma]=A$ and $u\in W^{k,p}(\Sigma,X)$ with $kp>2$,
\item $i$ a complex structure on $\Sigma$ and $J\in \mathcal J_V$,
\item $\mathcal I_A\in[\mathcal I_A]$ and $\mathcal I_A\subset u(\Sigma)$.
\end{enumerate}
Note that every map $u$ is locally injective.

The last condition needs some explaining:  The initial data $\mathcal I_A$ consists of two types of sets:  Sets contained in $X\backslash V$ and pairs consisting of points in $V$ and an integer $s$.  The first set should be contained in $u(\Sigma)$, meaning the image goes through the constraints on $X\backslash V$, meeting each curve in $\Gamma_{d_2}$ only once.  The second set should also be contained in the image $u(\Sigma)$, however, each point should have prescribed contact order $s$.  Furthermore, the image should meet each element in $\Gamma_{l_2}$ exactly once.  These are exactly the conditions imposed in the definition of the space $\mathcal K$.

\subsection{\label{generic}Generic Complex Structures in $\mathcal J_V$}

We wish to show that $\mathcal J_V$ has a rich enough structure to allow for genericity statements for $J$-holomorphic curves.   A portion of these results has appeared in an Appendix in \cite{DL}. 

If $A\ne\mathfrak V$, the genericity results are proven by the standard method:  We will define a map $\mathcal F$ from the universal model $\mathcal U$ to a bundle $\mathcal B$ with fiber $W^{k-1,p}(\Lambda^{0,1}T^*\Sigma\otimes u^*TX)$ over $(i,u,J,\mathcal I_A)$ and show that it is submersive at its zeros.  Then we can apply the Sard-Smale theorem to obtain that $\mathcal J_V^A$ is of second category.  This will involve the following technical difficulty:  The spaces $\mathcal J_V$ and any subsets thereof which we will consider are not Banach manifolds in the $C^\infty$-topology.  However, the results we wish to obtain are for smooth almost complex structures.  In order to prove our results, we need to apply Taubes trick (see \cite{T} or \cite{McS}):  This breaks up the set of smooth almost complex structures into a countable intersection of sets, each of which considers only curves satisfying certain constraints.  These subsets are then shown to be open and dense by arguments restricted to $C^l$ smooth structures, where the Sard-Smale theorem is applicable.  We will not go through this technical step but implicitly assume this throughout the section, details can be found in Ch. 3 of \cite{McS}.

\begin{lemma}\label{genericA}
Let $A\in H_2(X,\mathbb Z)$, $A\ne \mathfrak V$, and let $\mathcal I_A$ be a set of initial data. Denote the set of pairs $(J,\mathcal I_A)$ by $\mathfrak I$.  Let $\mathcal J_V^A$ be the subset of pairs $(J,\mathcal I_A)$ which are non-degenerate for the class $A$ in the sense of Def. \ref{nondeg}. Then $\mathcal J_V^A$ is a set of second category in $\mathfrak I$.
\end{lemma}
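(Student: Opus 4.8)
The plan is to apply the standard Sard--Smale transversality machinery to the section $\mathcal F$ of the bundle $\mathcal B$ over the universal space $\mathcal U$, restricting throughout to the constrained family $\mathcal J_V$ of almost complex structures that keep $V$ pseudoholomorphic. First I would define $\mathcal F(i,u,J,\mathcal I_A) = \dbar_{J,i} u$ (the $(0,1)$-part of $du$) together with the evaluation constraints encoded in $\mathcal I_A$, so that the zero set of $\mathcal F$ is precisely the universal moduli space projecting onto $\mathcal K$. The key analytic step is to show that the vertical differential $D\mathcal F$ is surjective at every zero; this is the linearization that combines the usual Cauchy--Riemann operator $D_u$ acting on sections of $u^*TX$ with the derivative in the $J$-direction, the latter being an operator of the form $\eta \mapsto \tfrac12 Y \circ du \circ i$ for variations $Y$ of $J$. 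Once surjectivity of $D\mathcal F$ is established, the universal space $\mathcal U^A = \mathcal F^{-1}(0)$ is a Banach manifold, the projection $\pi:\mathcal U^A \to \mathfrak I$ is Fredholm of index $d_A$ (or $d_A$ adjusted by the evaluation constraints), and the Sard--Smale theorem identifies the regular values of $\pi$ with a set of second category; at a regular value $(J,\mathcal I_A)$ the fiber is cut out transversally, which is exactly the nondegeneracy condition of Definition \ref{nondeg}. The Taubes trick, invoked in the preceding discussion, upgrades the $C^l$ statement to the smooth category.

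The heart of the argument, and the place where the constraint $J\in\mathcal J_V$ matters, is verifying that $D\mathcal F$ remains surjective even though the admissible variations $Y$ of $J$ are no longer arbitrary: they must preserve the condition that $V$ stays pseudoholomorphic, i.e. $Y$ must restrict to zero along $TV$ in the appropriate sense (equivalently, $Y$ may vary $J$ only in directions transverse to the tangency of $V$). The standard proof proceeds by contradiction: if $D\mathcal F$ is not surjective at a zero, there is a nonzero element $\zeta$ in the cokernel, annihilating the image of both $D_u$ and the $J$-variation term. Unique continuation for the formal adjoint of $D_u$ forces $\zeta$ to be supported on a set where it cannot be killed by any allowed $Y$, and one then constructs an explicit infinitesimal variation $Y$ supported near a point of $u(\Sigma)\setminus V$ (away from the injective-failure set $\mathcal E$ and away from $V$) that pairs nontrivially with $\zeta$, giving the contradiction. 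Here the hypothesis $A\neq\mathfrak V$ is essential: it guarantees that $u(\Sigma)$ is not contained in $V$, so by Lemma~\ref{index} and the local contact description in Equation \ref{contact} there is an open subset of $u(\Sigma)$ lying in $X\setminus V$ where $u$ is an embedding, and on that subset the constrained variations $Y$ are just as free as unconstrained ones. This is why the constraint on $J$ does not obstruct transversality in this case.

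The main obstacle I anticipate is precisely this surjectivity-with-constraints step, and in particular ensuring that the somewhere-injectivity needed for the unique continuation argument is available. Because $u$ is only an embedding off the finite set $\mathcal E$ and is merely locally injective everywhere, I would invoke the structure theory of simple $J$-holomorphic curves (as in \cite{McS}, Cor.~2.5.3, already cited in the contact-order lemma) to locate a point $z_0$ where $u$ is injective and immersed, $u(z_0)\notin V$, and $z_0$ is not a constraint point of $\mathcal I_A$; such a point exists since the failure locus and $u^{-1}(V)$ are finite. Concentrating the variation $Y$ in a small ball around $u(z_0)$ then decouples the $J$-perturbation from the pseudoholomorphicity of $V$ entirely. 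A secondary technical point is the compatibility of $Y$ with $\omega$: the variations must lie in $T_J\mathcal J_\omega$, the space of $\omega$-antilinear, $\omega$-compatible infinitesimal deformations, but this is the same linear-algebraic constraint handled in the absolute theory and imposes no new difficulty once the support of $Y$ is chosen away from $V$. Finally, I would treat the evaluation part of $D\mathcal F$, coming from the point, curve, and hypersurface constraints in $\mathcal I_A$, by the usual observation that prescribing finitely many incidence conditions is a surjective linear map once $D_u$ is surjective, so that $D\mathcal F = D_u \oplus \text{(variation in }J) \oplus ev_{\mathcal I_A}$ is surjective as a whole.
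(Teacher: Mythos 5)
Your proposal is correct and follows essentially the same route as the paper: the paper likewise linearizes $\mathcal F(i,u,J,\mathcal I_A)=\dbar_{i,J}u$ on the universal space, uses $A\ne\mathfrak V$ to locate a point $x_0$ with $u(x_0)\in X\backslash V$ and $du(x_0)\ne 0$ near which the variations $Y$ are unconstrained by $V$, kills a putative cokernel element $\eta$ with a cutoff variation supported near $u(x_0)$ followed by Aronszajn's unique continuation applied to $D_u^*\eta=0$, and concludes with Sard--Smale applied to the projection onto the $(J,\mathcal I_A)$ factors, with the Taubes trick handling the passage from $C^l$ to smooth structures. The one small inaccuracy is your citation of Lemma \ref{index} for the finiteness of $u^{-1}(V)$; the relevant fact follows from the local contact expansion in Eq. \ref{contact} (which you also cite), but this does not affect the argument.
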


Note that the universal model excludes multiple covers of the hypersurface $V$ in the case that $A=a\mathfrak V$ for $a\ge 2$, and we can thus assume that any map $u:\Sigma\rightarrow M$ satisfies $u(\Sigma)\not\subset V$ for this proof.  In particular, $V$ could be a square 0-torus and $A=a\mathfrak V$ for $a\ge 2$.

\begin{proof}

Define the map $\mathcal F:\mathcal U\rightarrow \mathcal B$ as $(i,u,J,\mathcal I_A)\mapsto \overline\partial_{i,J}u$.  Then the linearization at a zero $(i,u,J,\mathcal I_A)$ is given as
\begin{equation}
{\mathcal F}_*(\alpha,\xi,Y)=D_u\xi + \frac{1}{2}(Y\circ du\circ i + J\circ du\circ \alpha)
\end{equation}
where $D_u$ is Fredholm, $Y$ and $\alpha$ are variations of the respective almost complex structures.  This is a map on $H_i^{0,1}(T_\mathbb C\Sigma)\times W^{k-1,p}(u^*TX)\times T\mathcal J_V$.  The components of $(\alpha,\xi,Y)$ correspond to perturbations in the complex structure $i$, the image $u(\sigma)$ and $J$.

Consider $u\in \mathcal U$ such that there exists a point $x_0\in \Sigma$ with $u(x_0)\in X\backslash V$ and $du(x_0)\ne 0$ (The second condition is satisfied almost everywhere, as $u$ is a $J$-holomorphic map.).  Then there exists a neighborhood $N$ of $x_0$ in $\Sigma$ such that 
\begin{enumerate}
\item $du(x)\ne 0$, 
\item $u(x)\not\in V$ for all $x\in N$.
\end{enumerate}
In particular, we know that the map $u$ is locally injective on $N$.  Furthermore, we can find a neighborhood in $N$, such that there are no constraints on the almost complex structure $J\in \mathcal J_V$, i.e. this neighborhood does not intersect $V$.  More precisely, $Y$ can be chosen as from the set of $\omega$-tame almost complex structures with no restrictions given by $V$.  Denote this open set by $N$ as well.

Let $\eta\in $coker$ \mathcal F_*$.  Consider any $x\in N$ with $\eta(x)\ne 0$.  Then Lemma 3.2.2, \cite{McS}, provides a matrix $Y_0$ with the properties
\begin{itemize}
\item $Y_0=Y_0^T=J_0Y_0J_0$ with $J_0$ the standard almost complex structure in a local chart and 
\item $Y_0[ du(x)\circ i(x)]=\eta(x)$.
\end{itemize}
On $N$ choose any variation $Y$ of $J$ such that $Y(u(x))=Y_0$.  Then define the map $f:N\rightarrow \mathbb R$ by $\langle Y\circ du\circ i,\eta\rangle$.  Note that $f(x)>0$ by definition of $Y$.  Therefore, we can find an open set $N_1$ in $N$ such that $f>0$ on that open set. 
Using the local injectivity of the map $u$ and arguing as in \cite{McS}, we can find a neighborhood $N_2\subset N_1$ and a neighborhood $U\subset M$ of $u(x_0)$ such that $u^{-1}(U)\subset N_2$.  Choose a cutoff function $\beta$ supported in $U$ such that $\beta(u(x))=1$.  Hence in particular
\begin{equation}
\int_\Sigma \langle \mathcal F_*(0,0,\beta Y),\eta\rangle>0
\end{equation}  
and therefore $\eta(x)=0$.  This result holds for any $x\in N$, therefore $\eta$ vanishes on an open set.

As we have assumed $\eta\in $coker$ \mathcal F_*$, it follows that
\[
0\;=\;\int_\Sigma \langle \mathcal F_*(0,\xi,0),\eta\rangle\;=\;\int_\Sigma\langle D_u\xi,\eta\rangle
\]
for any $\xi$.  Then it follows that $D_u^*\eta =0$ and $0=\triangle \eta + l.o.t.$.  Aronszajn's theorem allows us to conclude that $\eta=0$ and hence $\mathcal F_*$ is surjective.

Thus we have the needed surjectivity for all maps admitting $x_0$ as described above: $u(x_0)\not\in V$ and $du(x_0)\ne 0$.  As stated before, this last condition is fulfilled off a finite set of points on $\Sigma$.  The first holds for any map $u$ in class $A$ as we have assumed that $A\ne \mathfrak V$.

Now apply the Sard-Smale theorem to the projection onto the last two factors of $(i,u,J,\mathcal I_A)$ (If $\mathcal I_A=\emptyset$ then only onto the $J$-factor.).
\end{proof}

As we have seen in the above proof, for the class $A=\mathfrak V$ which may have representatives which do not lie outside of $V$, we must be careful.  In particular, it is conceivable, that the particular hypersurface $V$ chosen may not be generic in the sense of Taubes, i.e. the set $\mathcal J_V$ may contain almost complex structures for which the linearization of $\overline \partial_J$ at the embedding of $V$ is not surjective.  The rest of this section addresses this issue.  We begin by showing that the cokernel of the linearization of the operator $\overline\partial_J$ at a $J$-holomorphic embedding of $V$ has the expected dimension:

Let $j$ be an almost complex structure on $V$.  Define $\mathcal J_V^j=\{J\in\mathcal J_V\;\vert \;J\vert_V=j\}$ and call any $J$-holomorphic embedding of $V$ for $J\in  \mathcal J_V^j$ a $j$-holomorphic embedding.

\begin{lemma}\label{genericV}Fix a $j$-holomorphic embedding $u:(\Sigma,i)\rightarrow (X,J)$ for some $J\in\mathcal J_V^j$.    If $d_{\mathfrak V}\ge 0$, then there exists a set $\mathcal J_V^{g,j}$ of second category  in $\mathcal J_V^j$ such that for any $J\in \mathcal J_V^{g,j}$ the linearization of $\overline \partial_{i,J}$ at the embedding $u$ is surjective.  If $d_{\mathfrak V}<0$, then then there exists a set $\mathcal J_V^{g,j}$ of second category  in $\mathcal J_V^j$ such that the hypersurface $V$ is rigid in $X$. 
\end{lemma}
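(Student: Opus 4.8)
The plan is to reduce the statement to a genericity result for the \emph{normal} Cauchy--Riemann operator $D$ of Eq.~\ref{D} (the operator used in Definition~\ref{nondeg}), run a Sard--Smale argument whose perturbation parameter is the normal $1$-jet of $J$ along $V$, and read off the two cases from the index. The operator at issue is $D:\Gamma(N)\to\Omega^{0,1}(N)$, a real-linear CR operator on the complex line bundle $N=N_V$ over the genus-$g$ surface $\Sigma\cong V$; the tangential and complex-structure directions are standard and, as in the proof of Lemma~\ref{genericA}, are absorbed by the variation $\alpha$ of $i$. Since $\mu\,\overline{s}$ is a zeroth-order real-linear perturbation, the real Fredholm index of $D$ equals that of $\dbar_N$, namely $2(\deg N+1-g)=2(\mathfrak V^2+1-g)=2d_{\mathfrak V}$, where the last equality uses the adjunction formula $K_\omega\cdot\mathfrak V+\mathfrak V^2=2g-2$. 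Because a real-linear CR operator on a line bundle obeys the similarity principle, every nonzero element of $\ker D$ and of $\ker D^{*}$ has only isolated zeros of positive order; in particular $\dim_{\mathbb R}\mathrm{coker}\,D=\dim_{\mathbb R}\ker D-2d_{\mathfrak V}$, so it suffices to show that for generic $J\in\mathcal J_V^{j}$ the kernel attains its minimal real dimension $\max(0,2d_{\mathfrak V})$. This yields surjectivity when $d_{\mathfrak V}\ge 0$ and $\ker D=0$, i.e. rigidity, when $d_{\mathfrak V}<0$.

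The essential difficulty, which I expect to be the main obstacle, is that perturbing $J$ inside $\mathcal J_V^{j}$ keeps $V$ pseudoholomorphic, so $V$ is a zero of $\dbar_{i,J}$ for \emph{every} such $J$ and the ``move the map'' transversality used in Lemma~\ref{genericA} is unavailable; one must instead move the \emph{operator} $D$. A perturbation $Y\in T_J\mathcal J_V^{j}$ satisfies $Y|_V=0$ pointwise, so it contributes nothing through the term $Y\circ du\circ i$ of the linearization in Lemma~\ref{genericA}. However the coefficients $\nu,\mu$ of $D$ depend on the $1$-jet of $J$ transverse to $V$, and $Y$ is free to prescribe the normal derivative $\partial_\nu Y|_V$. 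The key step is the purely local claim that the induced map $Y\mapsto\dot\mu$ from $\{Y:Y|_V=0\}$ to $\Gamma(T^{0,1}V\otimes N^{\otimes 2})$ is surjective. This I would check fibrewise over $V$: write $J$ in a Taubes normal chart, compute $\mu$ in terms of $\partial_\nu J$, and verify that the resulting linear-algebra map from admissible normal derivatives (matrices anticommuting with $j$, modulo those fixing $V$) onto the fibre $T^{0,1}_zV\otimes N_z^{\otimes 2}$ is onto; surjectivity on sections then follows.

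Granting this, I would form the projectivized universal kernel
\[
\widetilde{\mathcal M}=\{(J,[\xi]):J\in\mathcal J_V^{j},\ \xi\in\ker D_J\setminus\{0\}\}\subset \mathcal J_V^{j}\times\mathbb P\big(W^{k,p}(N)\big),
\]
and verify transversality of its defining section. At $(J,\xi)$ with $\xi\neq 0$ the universal linearization is $(Y,\zeta)\mapsto \dot\mu[Y]\,\overline\xi+\dot\nu[Y]\,\xi+D_J\zeta$; if $\eta$ annihilates its image then $D_J^{*}\eta=0$, so $\eta$ has isolated zeros, while $\int\langle \dot\mu\,\overline\xi,\eta\rangle=0$ for all $\dot\mu$ forces $\overline\xi\,\eta\equiv 0$, impossible since $\xi$ and $\eta$ each vanish only at isolated points. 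Hence $\widetilde{\mathcal M}$ is a Banach manifold and the projection $\pi:\widetilde{\mathcal M}\to\mathcal J_V^{j}$ is Fredholm of index $2d_{\mathfrak V}-1$. Applying Sard--Smale (with Taubes' trick to pass from $C^{\ell}$ to $C^{\infty}$, exactly as invoked before Lemma~\ref{genericA}): when $d_{\mathfrak V}\le 0$ the index is negative, the image of $\pi$ is meager, and a generic $J$ has $\ker D_J=0$; when $d_{\mathfrak V}>0$ the regular fibre has real dimension $2d_{\mathfrak V}-1$, so $\dim_{\mathbb R}\ker D_J=2d_{\mathfrak V}$ and $\mathrm{coker}\,D_J=0$. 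In either case the good set $\mathcal J_V^{g,j}$ is residual in $\mathcal J_V^{j}$, and for $d_{\mathfrak V}<0$ the vanishing $\ker D_J=0$ says $V$ admits no nontrivial normal deformation, i.e. $V$ is rigid.
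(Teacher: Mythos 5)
Your proposal is correct, and its engine is the same as the paper's: because every variation $Y\in T_J\mathcal J_V^j$ vanishes along $V$, the usual zeroth-order term $Y\circ du\circ i$ gives nothing, and transversality must be extracted from the transverse $1$-jet of $Y$ --- your $\dot\mu[Y]$ is exactly the paper's term $\tfrac12\nabla_\xi Y\circ du\circ i$ --- applied on a universal space of \emph{nonzero} kernel elements, followed by Sard--Smale over $\mathcal J_V^j$ and a dimension dichotomy in $d_{\mathfrak V}$. The packaging, however, is genuinely different. The paper (following Section 4 of \cite{U}) stays on the universal model: it introduces the operator $\mathcal G(\xi,\alpha,J)=\mathcal F_*(\alpha,\xi,0)$ acting on $u^*TX$-valued sections, proves surjectivity of its linearization at zeros with $\xi\ne 0$ by choosing $\nabla_\xi Y$ pointwise to kill the $N_V$-projection of a putative cokernel element (quoting \cite{U} for the vanishing of the $TV$-projection), and reads off both cases from the expected dimension of the set $\{\mathcal G=0,\ \xi\ne 0\}$. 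You instead reduce at the outset to Taubes' normal operator $D$ on $\Gamma(N)$, projectivize its kernel, and apply Sard--Smale to the Fredholm projection $\pi:\widetilde{\mathcal M}\to\mathcal J_V^j$ of index $2d_{\mathfrak V}-1$. Your route buys a cleaner numerical conclusion (the explicit index $2d_{\mathfrak V}$ via adjunction, compact projectivized fibres) and replaces Usher's tangential argument and Aronszajn-type unique continuation by the similarity principle; what it owes in exchange are two steps you should write out in full, since they carry real weight: (i) the reduction itself, i.e.\ that surjectivity of the full linearization including the $\alpha$-variations of $i$ is equivalent to surjectivity of $D$ (standard for embedded curves, because $\alpha$ exactly kills the $T\Sigma$-part of the cokernel, but it is precisely the step the paper outsources to \cite{U}); and (ii) the fibrewise surjectivity of $Y\mapsto\dot\mu$ with $\dot\nu[Y]$ simultaneously controllable --- this is the same pointwise linear algebra, constrained only by $J$-antilinearity and $\omega$-compatibility of the normal derivative of $Y$, as the paper's choice of the matrix $B$ in $\nabla_\xi Y$, so it does go through.
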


Let us describe the proof before giving the exact proof.  We follow ideas of Section 4, \cite{U}.  We need to show that for a fixed embedding $u:\Sigma\rightarrow X$ of $V$ the linearization $\mathcal F_*$ of $\overline \partial_{i,J}$ at $u$ has a cokernel of the correct dimension for generic $J\in \mathcal J_V^j$.  To do so, we will consider the operator $\mathcal G(\xi,\alpha,J):= \mathcal F_*(\alpha,\xi,0)$ at $(i,u,J,\mathcal I_A)$.  We will show that the kernel of the linearization $\mathcal F_*$ for non-zero $\xi$ has the expected dimension for generic $J$ and hence the linearization of $\overline \partial_{i,J}$ at $u$ also has the expected dimension.  Note also, that for any $J\in\mathcal J_V^j$, the map $u$ is $J$-holomorphic.

What is really going on in this construction?  The operator $\mathcal F$ is a section of a bundle $\mathcal B$ over $\mathcal U$, as described above.  The linearization $\mathcal F_*$ is a map defined on $H_i^{0,1}(T_\mathbb C\Sigma)\times W^{k-1,p}(u^*TX)\times T\mathcal J_V$.  In our setup, we fix the complex structure along $V$ and do not allow perturbations of this structure on $V$.  Hence we remove the infinite dimensional component of the domain of $\mathcal F_*$ and are left with a finite dimensional setup.

Further, we consider a map $\mathcal U\rightarrow \mathcal J_V^j$.  In this map, we fix a "constant section" $u$ over $\mathcal J^j_V$, i.e. we consider the structure of the tangent spaces along a fixed $j$-holomorphic map $u$  while not allowing the almost complex structure $j$ along $V$ to vary.  Note that it is  $j$ which makes $u$ pseudoholomorphic.  Hence fixing $u$ is akin to considering a constant section in the bundle $\mathcal U\rightarrow \mathcal J_V^j$.  

We are only interested in the component of the tangent space along this section, this corresponds to the tangent space along the moduli space $\mathcal M=\mathcal F^{-1}(0)$ at the point $(u,J)$.  However, this is precisely the component of the kernel of $\mathcal F_*$ with $Y=0$, i.e. the set of pairs $(\xi,\alpha)$ such that $\mathcal F_*(\xi,\alpha,0)=0$, which corresponds to exactly the zeros of $\mathcal G$. 

When considering the zeros of the map $\mathcal G$ viewed over $\mathcal J_V^j$, we know from the considerations above that this is a collection of finite dimensional vector spaces.  We may remove any part of these spaces, so long as we leave an open set, which suffices to determine the dimension of the underlying space.  Hence, removing $\xi=0$, a component along which we cannot use our methods to determine the dimension of the kernel, still leaves a large enough set to be able to determine the dimension of the moduli space $\mathcal M$.  

For this reason, we want to show that the kernel of the linearization $\mathcal F_*$ for non-zero $\xi$,or equivalently the zero set of $\mathcal G$ for non-zero $\xi$, has the expected dimension $\max\{d_{\mathfrak V}, 0\}$ for generic $J\in\mathcal J_V^j$.

\begin{proof}The operator $\mathcal G$ is defined as 
\[
W^{k-1,p}(u^*TX)\times H^{0,1}_i(T_{\mathbb C}\Sigma)\times \mathcal J_V^j\rightarrow L^p(u^*TX\otimes T^{0,1}\Sigma)
\]
\[
(\xi,\alpha,J)\mapsto D_u^J\xi + \frac{1}{2}J\circ du\circ \alpha
\]  
where the term $D_u^J=\frac{1}{2}(\nabla \xi+ J\nabla\xi\circ i)$ for some $J$-hermitian connection $\nabla$ on $X$, say for example the Levi-Civita connection associated to $J$.  


Let $(\xi,\alpha,J)$ be a zero of $\mathcal G$.  Linearize $\mathcal G$ at $(\xi,\alpha,J)$:
\[
\mathcal G_*(\gamma,\mu,Y)=D_u^J\gamma +\frac{1}{2}\nabla_\xi Y\circ du\circ i+\frac{1}{2}J\circ du\circ \mu.
\]
As stated above, we assume nonvanishing $\xi$, hence we can assume that $\xi\ne 0$ on any open subset.  Let $\eta\in $coker $ \mathcal G_*$.  Let $x_0\in \Sigma$ be a point with $\eta(x_0)\ne 0\ne \xi(x_0)$.  In a neighborhood of $u(x_0)\in V$ the tangent bundle $TX$ splits as $TX=N_V\oplus TV$ with $N_V$ the normal bundle to $V$ in $X$.  With respect to this splitting, the map $Y$ has the form
\[
Y=\left(\begin{array}{cc} a&b\\0&0\end{array}\right)
\]
with all entries $J$-antilinear and $b\vert_V=0$, thus ensuring that $V$ is pseudoholomorphic and accounting for the fact that we have fixed the almost complex structure along $V$.  Thus $\nabla_\xi Y$ can have a similar form, but with no restrictions on the vanishing of components along $V$.  

Assume $\eta$ projected to $N_V$ is non-vanishing  Then we can choose 
\[
\nabla_\xi Y=\left(\begin{array}{cc} 0&B\\0&0\end{array}\right)
\]
at $x_0$ such that $B(x_0)[du(x_0)\circ i(x_0)](v)=\eta^{N_V}(x_0)(v)$ and $B(x_0)[du(x_0)\circ i(x_0)](\overline v)=\eta^{N_V}(x_0)(\overline v)$ for a generator $v\in T^{1,0}_{x_0}\Sigma$ and where $\eta^{N_V}$ is the projection of $\eta$ to  $N_V$.  Then, using the same universal model as in the previous Lemma, we can choose neighborhoods of $x_0$ and a cutoff function $\beta$ such that 
\[
\int_\Sigma \langle \mathcal G_*(0,0,\beta Y),\eta\rangle >0
\]
and thus any element of the cokernel of $\mathcal G_*$ must have $\eta^{N_V}=0$.  An argument in \cite{U} shows that the projection of $\eta$ to $TV$ must also vanish.  Therefore the map $\mathcal G_*$ is surjective at the embedding $u:\Sigma\rightarrow V$.  

Thus the set $\{(\xi,\alpha,J)\vert \mathcal G(\xi,\alpha,J)=0,\;J\in\mathcal J_V^j,\;\xi\ne 0\}$ is a smooth manifold and we may project onto the last factor.  Then applying Sard-Smale, we obtain a set $\mathcal J_V^{g,j}$ of second category in $\mathcal J_V^j$, such that for any $J\in \mathcal J_V^{g,j}$, the kernel of the linearization of $\overline\partial $ at non-zero perturbations $\xi$ of the map $u$ is a smooth manifold of the expected dimension.  In the case $d_{\mathfrak V}\ge 0$, this however implies that $\mathcal F_*$ at $(i,u,J,\Omega)$ is surjective.  Therefore, we have found a set $\mathcal J_V^{g,j}$ of second category in $\mathcal J_V^j$ such that the linearization of $\overline \partial_{i,J}$ at u is surjective at all elements of $\mathcal J_V^{g,j}$.  

If however $d_{\mathfrak V}<0$, then this kernel is generically empty.  This implies the rigidity of the embedding $u$ of $V$.

\end{proof}

In the statement of our result in Lemma \ref{genericV}, we fix an embedding $u$ of the hypersurface $V$.  This is not quite precise, as we are actually fixing the equivalence class of $u$ in $\mathcal U$ under orbits of the action of Diff$(\Sigma)$.  However, given any two embeddings $u:(\Sigma,i)\rightarrow (X,J)$ and $v:(\Sigma,i)\rightarrow (X,\tilde J)$  of $V$ for $J,\tilde J\in\mathcal J_V^j$, there exists a $\phi\in$ Diff$(\Sigma)$ such that $u=v\circ \phi$.  Thus, a change of embedding $u$ will not affect the outcome of Lemma \ref{genericV}.

For every almost complex structure $j$ on $V$ the previous results provide the following:
\begin{enumerate}
\item A set $\mathcal J_V^{g,j}$ of second category in $\mathcal J_V^j$ with the property that the linearization of the operator $\overline\partial$ at a fixed $j$-holomorphic embedding of $V$  is surjective ($d_{\mathfrak V}\ge 0$) or is injective ($d_{\mathfrak V}<0$).
\item Up to  a map $\phi\in$ Diff$(\Sigma)$,  there is a unique $j$-holomorphic embedding of $V$ for all $J\in \mathcal J_V^j$.
\end{enumerate}

Therefore, consider the following set:
\[
\mathcal J_V^g=\bigcup_{j} \mathcal J_V^{g,j}\subset \bigcup_{j} \mathcal J_V^j=\mathcal J_V.
\]

Note that $\mathcal J_V^g$ is actually a disjoint union of sets.  The following properties hold:  
\begin{enumerate}
\item The set $\mathcal J_V^g$ is dense in $\mathcal J_V$.
\item The linearization of the operator $\overline\partial$ at a fixed $j$-holomorphic embedding of $V$  is surjective ($d_{\mathfrak V}\ge 0$) or is injective ($d_{\mathfrak V}<0$) for any $J\in \mathcal J_V^g$.
\item Up to  a map $\phi\in$ Diff$(\Sigma)$,  there is a unique $j$-holomorphic embedding of $V$.
\end{enumerate}
We can now state the final result concerning genericity that we will need:

\begin{lemma}\label{jV}  
\begin{enumerate}
\item $d_{\mathfrak V}\ge 0$:  Let $\mathcal J_{\mathfrak V}$ be the subset of pairs $(J,\mathcal I_{\mathfrak V})$ which are non-degenerate for the class $\mathfrak V$ in the sense of Def. \ref{nondeg}.  Then $\mathcal J_{\mathfrak V}$ is dense in $\mathfrak I$.
\item $d_{\mathfrak V}<0$: There exists a dense set $\mathcal J_{\mathfrak V}\subset \mathcal J_V$ such that $V$ is rigid, i.e. there exist no pseudoholomorphic deformations of $V$ and there are no other pseudoholomorphic maps in class $\mathfrak V$. 
\end{enumerate}
\end{lemma}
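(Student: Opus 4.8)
The plan is to treat Lemma \ref{jV} as an \emph{assembly} of the fiberwise results of Lemma \ref{genericV} together with the three structural facts collected just above it: I will translate the surjectivity (resp. injectivity) of the linearization of $\dbar_{i,J}$ at the embedding $u$ of $V$ into the non-degeneracy conditions of Def. \ref{nondeg}, promote the fiberwise second-category statements to a density statement on $\mathcal J_V$, and—in the negative case—add a transversality argument excluding stray curves in class $\mathfrak V$.

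First I address why the conclusion is \emph{density} rather than second category. Recall $\mathcal J_V=\bigsqcup_j \mathcal J_V^j$ is partitioned by the restriction $J\mapsto J|_V=:j$, and within each fiber Lemma \ref{genericV} supplies a second-category (hence dense) subset $\mathcal J_V^{g,j}$. Given $J_0\in\mathcal J_V$ with $j_0=J_0|_V$ and any neighborhood $U$ of $J_0$, the set $U\cap\mathcal J_V^{j_0}$ is a nonempty open subset of the fiber $\mathcal J_V^{j_0}$ and therefore meets $\mathcal J_V^{g,j_0}\subset\mathcal J_V^g$; thus $\mathcal J_V^g$ is dense in $\mathcal J_V$, while the disjoint union over the uncountable parameter $j$ blocks any second-category conclusion.

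For the case $d_{\mathfrak V}\ge 0$ I use that on $\mathcal J_V^g$ the linearization $\mathcal F_*$ at the (essentially unique, by property 3) embedding of $V$ is surjective. The normal operator $D$ of Eq. \ref{D} is the restriction of this linearization to normal deformations and is a compact perturbation of $\dbar$, so surjectivity of $\mathcal F_*$ is equivalent to $\mathrm{coker}(D)=\{0\}$. When $d_{\mathfrak V}=0$ this is exactly Def. \ref{nondeg}(1), and $\mathcal J_V^g\subseteq\mathcal J_{\mathfrak V}$ gives density at once. When $d_{\mathfrak V}>0$ I would add the evaluation: with $D$ already surjective, $\ker D$ has the expected dimension $d_{\mathfrak V}$, and it remains to make $D\oplus ev_{\mathcal I_{\mathfrak V}}$ surjective. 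Running Sard--Smale on the universal space over $\mathcal J_V\times[\mathcal I_{\mathfrak V}]$ while letting the constraint data move within $[\mathcal I_{\mathfrak V}]$ makes the evaluation on $\ker D$ surjective on a dense set of pairs $(J,\mathcal I_{\mathfrak V})$; intersecting with the preimage of $\mathcal J_V^g$ yields density of $\mathcal J_{\mathfrak V}$ in $\mathfrak I$.

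For the case $d_{\mathfrak V}<0$ the injectivity of $\mathcal F_*$ on $\mathcal J_V^g$ (property 2) says $V$ carries no infinitesimal normal deformations, hence is rigid. To exclude a second $J$-holomorphic curve $C'$ in class $\mathfrak V$ with $C'\ne V$, note first that $\mathfrak V$ cannot be multiply toroidal (that would force $d_{\mathfrak V}=0$), and that any irreducible $C'\ne V$ is not contained in $V$, so it carries a point mapping into $X\backslash V$ where $J$ may be perturbed freely inside $\mathcal J_V$. The argument of Lemma \ref{genericA} then applies to such curves, and since their virtual dimension is $d_{\mathfrak V}<0$ they generically do not exist; reducible and multiply covered configurations are ruled out using positivity of intersections against $V$. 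This produces a second-category subset of $\mathcal J_V$ with no stray curves, whose intersection with the dense set $\mathcal J_V^g$ is the desired dense set $\mathcal J_{\mathfrak V}$. The main obstacle I anticipate is precisely this last exclusion of stray curves in the fixed, non-generic class $\mathfrak V$, together with the careful identification of $D$ with the restriction of $\mathcal F_*$ so that Def. \ref{nondeg} follows from Lemma \ref{genericV}; the fiberwise-to-total density passage, though the reason for the weaker conclusion, is elementary once the partition by $J|_V$ is in place.
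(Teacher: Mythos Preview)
Your overall architecture matches the paper's: work fiberwise over $j=J|_V$, use Lemma \ref{genericV} for the embedding of $V$ itself, and take the union over $j$ to get density rather than second category. The $d_{\mathfrak V}<0$ case is handled essentially as in the paper.

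There is, however, a genuine gap in your $d_{\mathfrak V}\ge 0$ argument. The set $\mathcal J_{\mathfrak V}$ consists of pairs for which \emph{every} $J$-holomorphic curve in class $\mathfrak V$ meeting the data is non-degenerate, not just the hypersurface $V$. You only verify non-degeneracy of $V$ itself (via property 2 of $\mathcal J_V^g$) and then assert $\mathcal J_V^g\subseteq\mathcal J_{\mathfrak V}$ when $d_{\mathfrak V}=0$; this inclusion is false in general, since for $J\in\mathcal J_V^g$ there may well be other embedded curves $C$ in class $\mathfrak V$ with $C\not\subset V$, and nothing you have said forces their normal operators to be surjective. The paper closes this by observing that any such $C$ has a point in $X\setminus V$, so the argument of Lemma \ref{genericA} applies to it; one then runs Sard--Smale over the universal model (restricted to $J\in\mathcal J_V^{g,j}$) to extract, within each fiber, a further second-category subset $\mathcal J_{\mathfrak V}^{g,j}\subset\mathcal J_V^{g,j}\times\{\text{data}\}$ on which \emph{all} curves in class $\mathfrak V$ are non-degenerate, and finally takes the union over $j$. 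You already deploy exactly this mechanism in your $d_{\mathfrak V}<0$ paragraph to exclude stray curves; you need the same step here, for both $d_{\mathfrak V}=0$ and $d_{\mathfrak V}>0$, before the evaluation-map refinement.
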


\begin{proof}To begin, we will replace the set $\mathcal J_V$ by $\mathcal J^g_V$ which is a dense subset, as seen from the previous remarks.  Further, for any $(J,\mathcal I_{\mathfrak V})$, $J\in\mathcal J_V^g$, we have surjectivity or injectivity of the linearization at the embedding of $V$.

Consider the case $d_{\mathfrak V}\ge 0$.  Fix a $j$ on $V$.  Then consider the set $\mathcal J_V^{g,j}$ provided by Lemma \ref{genericV}.  The linearization at the embedding of $V$ is surjective for any $J\in \mathcal J_V^{g,j}$.  For any element $(i,u,J,\mathcal I_{\mathfrak V})$ of $\mathcal U$ with $u(\Sigma)\not\subset V$ representing the class $\mathfrak V$ and $J\in \mathcal J_V^{g,j}$, arguments as in the proof of Lemma \ref{genericA} provide the necessary surjectivity.  Therefore, there exists a further set $\mathcal J_{\mathfrak V}^{g,j}$ of second category in $\mathcal J_V^{g,j}\times\{$ initial data$\}$ such that any pair $(J,\mathcal I_{\mathfrak V})\in\mathcal J_{\mathfrak V}^{g,j}$ is nondegenerate, i.e. any $J$-holomorphic curve $u(\Sigma)$ representing $\mathfrak V$ is non-degenerate in the sense of Def. \ref{nondeg}.  

Define $\mathcal J_{\mathfrak V}=\bigcup_{j}\mathcal J_{\mathfrak V}^{g,j}$.  This is a dense subset of $\mathcal J_V^{g,j}\times\{$ initial data$\}$ such that any pair $(J,\mathcal I_{\mathfrak V})\in\mathcal J_{\mathfrak V}$ is nondegenerate. 

If $d_{\mathfrak V}<0$, then restrict to $ \mathcal J^g_V$ as well.  Thereby we have already ensured that $V$ is rigid.  Now apply the proof of Lemma \ref{genericA} to the universal model $\mathcal U$, which we modify to allow only maps $u:(\Sigma,i)\rightarrow (X,J)$ such that $u(\Sigma)\not\subset V$.  Then we can find a set $\mathcal J_{\mathfrak V}$ of second category in  $ \mathcal J^g_V$ such that there exist no maps in class $\mathfrak V$ other than the embedding of $V$.
\end{proof}

Based on this result, we make the following definition:

\begin{definition}
A symplectic hypersurface $V$ is called stable if $d_{\mathfrak V}\ge 0$.
\end{definition}

The following positivity result holds for stable hypersurfaces:

\begin{lemma}\label{positivity}
Let $V$ and $W$ be two symplectic submanifolds of $X$ such that $\mathfrak V,\;\mathfrak W\in H_2(X)$ satisfy $d_\mathfrak V\ge 0$ and $d_\mathfrak W\ge 0$.  Then $\mathfrak V\cdot \mathfrak W\ge 0$.

\end{lemma}

\begin{proof}
By assumption, there exist symplectic submanifolds representing the classes $\mathfrak V$ and $\mathfrak W$.  Taubes' results state that for a Baire set of almost complex structures, the space of submanifolds in each class is non-empty.  A Baire set is a countable intersection of open and dense sets.  Thus, intersecting the two Baire sets for $\mathfrak V$ and $\mathfrak W$, we obtain again a countable intersection of open and dense sets, in particular we obtain a dense set.  Choosing a $J$ from this set, we have $J$-holomorphic embedded representatives of $\mathfrak V$ and $\mathfrak W$ which must intersect non-negatively.  Hence $\mathfrak V\cdot \mathfrak W\ge 0$.

\end{proof}

If one of the two manifolds is non-stable, this result no longer holds.  Consider a $-4$-sphere obtained by blowing up a point in $X$ to obtain an exceptional sphere $e_1$ and then blowing up three distinct points on this sphere to obtain a $-4$-sphere $V$.  This procedure leads to $[e_1]\cdot \mathfrak V=-1$.

\section{\label{structure}The Structure of $\mathcal K_V(J,\mathcal I_A)$}

The goal of this Section is to show that for symplectic hypersurfaces $V$ the spaces $\mathcal K_V(J,\mathcal I_A)$ are smooth, finite, compact spaces which behave well under perturbations of $J$ and the initial data.  These results will provide the foundation for the proof of Prop. \ref{mainprop}.  We begin with the smoothness of $\mathcal K_V(J,\mathcal I_A)$, this will follow almost directly from the results of the previous Section.

To show compactness, we will analyse the behavior of limit curves.  We will show that for generic $(J,\mathcal I_A)$ the limit curve is always a smooth non-multiply covered embedded symplectic submanifold, with the possible exception of the multiply toroidal case.  That case will be addressed separately at the end of this Section.

\subsection{Smoothness}The results of the previous section allow us to prove the following :

\begin{lemma}Fix $A\in H_2(X)$ not multiply toroidal and a class $[\mathcal I_A]$.  There is a set of second category $U\subset \mathcal J_V\times [\mathcal I_A]$  such that for any pair $(J,\mathcal I_A)\in U$ the preimage under the projection 
\[
(i,u,J,\mathcal I_A)\rightarrow (J,\mathcal I_A)
\]
 contains only simple embeddings ($\mathcal E=\emptyset$).
\end{lemma}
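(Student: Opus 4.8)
The plan is to reduce the claim to two independent facts about each $u$ in a fibre of the projection: that $u$ is somewhere injective (simple), and that a simple $u$ of the prescribed genus is automatically an embedding, so that $\mathcal E=\emptyset$. First, for simplicity I would note that a map which is an embedding off a finite set $\mathcal E$ is injective away from finitely many points, hence somewhere injective; thus by construction the universal model $\mathcal U$ contains no multiple covers, and in particular no cover of $V$. Were one to work with a larger a priori model, a multiple cover $u=v\circ\phi$ with $v$ simple of class $B$ and $A=kB$, $k\ge 2$, would be excluded as follows. Its image is the simple class-$B$ curve $v(\Sigma')$, which for generic $(J,\mathcal I_A)$ moves in a family of dimension $2d_B$ by Lemma \ref{genericA}; if instead $B=\mathfrak V$, then Lemma \ref{jV} forces $v(\Sigma')=V$, contradicting $u(\Sigma)\not\subset V$. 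A dimension count then shows that this family is too small to meet a proper $\mathcal I_A$, whose constraints are calibrated to cut the $2d_A$-dimensional class-$A$ moduli space down to a finite set, unless $B^2=0=K_\omega\cdot B$, i.e. unless $B$ is a square-zero torus and $A=kB$ is multiply toroidal, precisely the case excluded by hypothesis.

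Second, for embeddedness I would invoke McDuff's adjunction formula for a simple $J$-holomorphic curve in a $4$-manifold, namely $2g(\Sigma)-2=A^2+K_\omega\cdot A-2\delta(u)$, where $\delta(u)\ge 0$ counts the nodes and critical points of $u$ and vanishes precisely when $u$ is a genuine embedding. Since $\Sigma$ was fixed of genus $g=g(A)$ with $2g(A)-2=A^2+K_\omega\cdot A$, the defect $\delta(u)$ is forced to vanish, so every simple $u$ in the fibre is an embedding and $\mathcal E=\emptyset$.

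To produce the second-category set $U$, I would take the pairs $(J,\mathcal I_A)$ for which the transversality established in the previous section holds: Lemma \ref{genericA} when $A\ne\mathfrak V$, and Lemma \ref{jV} when $A=\mathfrak V$ (using that the stable hypothesis $d_{\mathfrak V}\ge 0$ leaves the embedding of $V$ itself unobstructed), intersected with the smoothness locus already at hand. Over $U$ the universal moduli space is cut out transversally and the fibre of the projection is then a finite collection of simple embeddings, as claimed.

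The step I expect to be essentially formal is the adjunction argument; the genuine subtlety is that the transversality we rely on must be the $\mathcal J_V$-transversality of the previous section rather than the classical genericity in $\mathcal J_\omega$, since $J$ is constrained to keep $V$ pseudoholomorphic and the relation between generic sets in the two spaces is opaque (cf. the remark following Prop. \ref{mainprop}). The case $A=\mathfrak V$, in which the fixed curve $V$ is itself a member of the class under study and may be non-generic, must be handled separately through Lemma \ref{jV}, and ensuring that the multiple-cover dimension count interacts correctly with the relative contact data recorded in $\mathcal I_A$ is the bookkeeping that the proof must get right.
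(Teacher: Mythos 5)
Your proposal is correct, but its central step is genuinely different from the paper's. The paper disposes of this lemma in one line: the strata of the universal model with $\mathcal E\neq\emptyset$ correspond to operators of deficient index (each non-embedded point drops the index), so a dimension count shows these strata are avoided for a second-category set of pairs $(J,\mathcal I_A)$ -- a Sard--Smale-style argument uniform with the rest of the Taubes framework. You instead pin down embeddedness via McDuff's adjunction formula: since the domain $\Sigma$ in the universal model is fixed of genus $g(A)$ with $2g(A)-2=A^2+K_\omega\cdot A$, any simple $J$-holomorphic $u$ has defect $\delta(u)=g(A)-g(\Sigma)=0$, hence is an embedding. This is a legitimate and in fact stronger conclusion -- it holds for \emph{every} $J\in\mathcal J_V$, not just generic ones, so genericity in your argument is only needed to import the transversality of Lemmas \ref{genericA} and \ref{jV}; the trade-off is that the adjunction route is special to dimension four and to the fixed-genus model, whereas the paper's index count is the argument that survives in settings where the domain genus is not pinned to $g(A)$. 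Two small remarks: your multiple-cover discussion is, as you yourself note, moot, since an embedding off a finite set is somewhere injective and the model contains no covers by construction, so that entire paragraph could be deleted; and within that hypothetical aside, the claim that Lemma \ref{jV} forces $v(\Sigma')=V$ when $B=\mathfrak V$ is only right in the non-stable case $d_{\mathfrak V}<0$ -- for stable $V$ the class-$\mathfrak V$ curves move in a $2d_{\mathfrak V}$-dimensional family and the exclusion must come from the dimension count itself, not from rigidity.
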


\begin{proof}
This follows from a dimension count on the index of the associated operator.
\end{proof}

Moreover, a similar dimension count as well as the Sard-Smale Theorem applied to the projection $(i,u,J,\mathcal I_A)\rightarrow (J,\mathcal I_A)$ proves

\begin{lemma}\label{smoothK}Fix $A\in H_2(X)$ and a class $[\mathcal I_A]$.  There is a set of second category $U\subset \mathcal J_V\times [\mathcal I_A]$ with the following properties:  When a pair $(J,\mathcal I_A)$ is chosen from $U$, then

\begin{enumerate}
\item  $\mathcal K_V(J,\mathcal I_A)$ is empty if $d_A<0$ and $A\ne \mathfrak V$. 
\item If $\mathcal I_A$ is proper, then $\mathcal K_V(J,\mathcal I_A)$ is a smooth 0-dimensional manifold and each point is non-degenerate. 
\item  Assume $A$ is not multiply toroidal.  There is an open neighborhood of pairs in $\mathcal J_V\times [\mathcal I_A]$ such that every pair therein obeys the previous assertions and the number of points in $\mathcal K$ is invariant in this neighborhood.
\end{enumerate}
\end{lemma}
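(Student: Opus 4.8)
The plan is to realize $\mathcal K_V(J,\mathcal I_A)$ as a fiber of the projection $\pi\colon \mathcal M\to \mathcal J_V\times[\mathcal I_A]$, where $\mathcal M=\mathcal F^{-1}(0)$ is the universal moduli space of the section $\mathcal F$ from Section \ref{generic}, and then to run a Sard--Smale argument. First I would assemble the transversality already in hand: for $A\ne\mathfrak V$, Lemma \ref{genericA} gives a second category set on which $\mathcal F_*$ is surjective at every zero, while for $A=\mathfrak V$ the same surjectivity holds on a dense set by Lemma \ref{jV} (using $d_{\mathfrak V}\ge 0$, and in the $d_{\mathfrak V}<0$ case the rigidity statement instead). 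Intersecting these with the second category set of the preceding lemma, on which every curve is a simple embedding ($\mathcal E=\emptyset$), shows that over a second category subset the space $\mathcal M$ is cut out transversally and is therefore a smooth Banach manifold.

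Next I would apply the Sard--Smale theorem to $\pi$. Its set of regular values is of second category; call it $U$ and retain only the part lying over the set just constructed. For $(J,\mathcal I_A)\in U$ the fiber $\pi^{-1}(J,\mathcal I_A)=\mathcal K_V(J,\mathcal I_A)$ is a smooth manifold whose dimension is the Fredholm index of $\pi$, equal to the index of the linearized operator $D\oplus ev_{\mathcal I_A}$ on sections of the normal bundle. The core computation is then the dimension count. Unconstrained, the deformation space of an embedded curve in class $A$ has real dimension $2d_A$ (the real index of the operator $D$ of \eqref{D}); each point of $\Omega_{d_1}$ imposes real codimension $2$, each member of $\Gamma_{d_2}$ codimension $1$, and the contact constraints along $V$ prescribed by $\Omega_{l_1}$, $\Gamma_{l_2}$ and $\Upsilon$, evaluated through the maps $G_k$ of Section \ref{contactorder}, contribute the remaining codimension, the contact-order bookkeeping being packaged by Lemma \ref{index}. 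I would verify that the proper conditions $2d_1+d_2-l_2-2l_3=2(d_A-l_A)$ and $A\cdot\mathfrak V=\sum s_i$ are exactly what forces the total codimension to equal $2d_A$, so that the expected dimension is $0$; this gives assertion (2), since membership of $(J,\mathcal I_A)$ in the regular value set is precisely the condition that every $C$ be non-degenerate in the sense of Def.~\ref{nondeg}. Assertion (1) follows because for $A\ne\mathfrak V$ with $d_A<0$ the index is negative, so a regular fiber is empty, consistent with the $d_A<0$ half of Lemma \ref{genericA}.

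Finally, for assertion (3) I would argue local invariance of the count. Since $A$ is not multiply toroidal, each point of the $0$-dimensional fiber is a transversally cut out, non-degenerate embedded curve, so the implicit function theorem makes each such point persist and vary continuously under small perturbations of $(J,\mathcal I_A)$, with no collisions. What remains is to rule out points appearing or escaping, which is a local Gromov compactness statement: in a sufficiently small neighborhood any Gromov limit of curves in $\mathcal K_V$ is again a simple embedded curve in $\mathcal K_V$, the non-multiply-toroidal hypothesis excluding the multiple covers of square-zero tori that are the only source of trouble. Hence the regular value set is open and the fiber count is locally constant. I expect this last step to be the main obstacle, since the honest justification that no degeneration occurs is exactly the compactness analysis carried out in the remainder of Section \ref{structure}; here it is needed only in the weak, local form, but it is the one place where more than a formal dimension count and Sard--Smale is required.
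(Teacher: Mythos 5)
Your proposal is correct and takes essentially the same route as the paper, whose entire proof is the remark that a ``similar dimension count as well as the Sard--Smale Theorem applied to the projection $(i,u,J,\mathcal I_A)\rightarrow (J,\mathcal I_A)$'' yields the lemma; your transversality inputs (Lemma \ref{genericA} for $A\ne\mathfrak V$, Lemma \ref{jV} for $A=\mathfrak V$), the codimension bookkeeping showing that properness forces expected dimension $0$, and the negative-index argument for assertion (1) are exactly the details being compressed there. Your flagging of the local compactness needed for assertion (3) as the real remaining work also matches the paper, which defers precisely that point to the compactness analysis in the rest of Section \ref{structure}.
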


{\bf Remark:} In particular, this result also holds for the class $\mathfrak V$.  Similar results have been proven by Jabuka, see \cite{J}.

Due to this result, we will from now on {\bf assume that $d_A\ge 0$.}

\subsection{Compactness}
We would now like to show, that every sequence of submanifolds
\[
\{C_m, J_m, ({\mathcal I_A})_m\}
\]
 with $({\mathcal I_A})_m$ a proper set of initial data in a fixed class $[\mathcal I_A]$ for all $m$, $J_m\in\mathcal J_V$ and   $C_m$ a point in the corresponding $\mathcal K$ for every $m$ has a subsequence that converges to a $J$-holomorphic submanifold $C$ in class $A$ provided that the limit point of $(J_m,(\mathcal I_{A})_m)$, denoted by $ (J,{\mathcal I_A})$,  is chosen from a suitable Baire set in ${\mathcal J_V}\times [\mathcal I_A]$.

We consider first some generic results  concerning the behavior of the limit curve obtained by Gromov convergence.   Consider a pseudoholomorphic curve $C=\cup C_i$ composed of embedded submanifolds subject to the restrictions imposed by a fixed set of initial data $\mathcal I_A$.  Assume for the moment, that we have a $J$-holomorphic map $f:\Sigma\rightarrow X$ representing the class $A$ such that its image is $C$.  To any multiply covered component we assign its multiplicity $m_i$ and replace the map $f:\Sigma_i\rightarrow X$ by a simple map $\phi_i:\Sigma_i\rightarrow X$ with the same image.  The same is done to any two components with the same image.  The result is a collection of tuples $\{(\phi_i,\Sigma_i,m_i)\}$ with the same image as $f$.  Moreover, we can replace the pair $(\phi_i,\Sigma_i)$ by its image $C_i$.  Furthermore, denoting $A_i=[C_i]$, we obtain $A=\sum_i m_i A_i$.  We must allow for the possibility, that one of  the $A_i=\mathfrak V$.  We therefore consider the decomposition $A=\sum_i m_i A_i+m\mathfrak V$, now assuming that $A_i\ne \mathfrak V$ for all $i$.

%
%
%
%
If we are in the case $\mathfrak V^2<0$ and $d_\mathfrak V\ge 0$, then $\mathfrak V^2=-1$, $d_\mathfrak V=0$ and $g(\mathfrak V)=0$.  This follows from standard arguments:

\begin{lemma}\label{neg}Let $B\in H_2(X)$ with $B^2<0$ and $d_B\ge 0$.  Then for generic $J\in\mathcal J_V$ there exist no embedded irreducible curves in class $B$ unless $B^2=-1$ and $g=0$.
\end{lemma}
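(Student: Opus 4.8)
The plan is to derive the statement directly from the adjunction equality for embedded $J$-holomorphic curves, so that the conclusion becomes a purely numerical consequence of the hypotheses $B^2<0$ and $d_B\ge 0$. Suppose $C$ is an embedded irreducible $J$-holomorphic curve representing $B$. Since $C$ is embedded it is somewhere injective, so the adjunction equality applies (cf. \cite{McS}) and the geometric genus $g$ of $C$ satisfies
\begin{equation}
2g-2 = B^2 + K_\omega\cdot B.
\end{equation}

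Next I would eliminate $K_\omega\cdot B$ using the definition $d_B=\frac{B^2-K_\omega\cdot B}{2}$, which gives $K_\omega\cdot B = B^2-2d_B$. Substituting into the adjunction equality yields $2g-2 = 2B^2-2d_B$, that is
\begin{equation}
g = 1 + B^2 - d_B.
\end{equation}
The hypotheses force $B^2\le -1$ (an integer that is strictly negative) and $d_B\ge 0$, hence $g = 1+B^2-d_B \le 1 + (-1) - 0 = 0$. Since the geometric genus of an honest curve is nonnegative, this forces $g=0$; reading the identity backwards, $0=1+B^2-d_B$ together with $d_B\ge 0$ and $B^2\le -1$ then forces $B^2=-1$ and $d_B=0$. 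Thus no embedded irreducible curve can exist in class $B$ unless $B^2=-1$ and $g=0$, which is exactly the claimed exception of an exceptional sphere, and also recovers the special case $B=\mathfrak V$ recorded just above the statement.

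The argument is essentially topological, so the only point requiring care is the justification of the adjunction \emph{equality} rather than merely the adjunction inequality: this is where embeddedness is used, guaranteeing that $C$ is simple and that the virtual and geometric genera coincide. Consequently the role of genericity of $J\in\mathcal J_V$ is auxiliary — it is invoked only to ensure, via the arguments of Lemma \ref{genericA}, that the curves under consideration are genuine somewhere-injective $J$-holomorphic submanifolds (not, for instance, multiple covers of a lower class) to which adjunction applies, rather than to drive the numerical conclusion itself. I expect the only mild subtlety to be confirming the sign conventions for $K_\omega$ and $d_B$ so that the passage from the adjunction equality to $g=1+B^2-d_B$ is exact.
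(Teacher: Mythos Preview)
Your argument is correct and follows essentially the same route as the paper: both proofs combine the adjunction equality $2g-2=B^2+K_\omega\cdot B$ with the hypothesis $d_B\ge 0$ (equivalently $K_\omega\cdot B\le B^2$) to force $g=0$ and then $B^2=K_\omega\cdot B=-1$. Your substitution $g=1+B^2-d_B$ packages the algebra a bit more cleanly, and your observation that genericity of $J$ plays no role in the numerical conclusion is accurate; the paper's closing appeal to Lemma~\ref{smoothK} is only a pointer to the ambient framework, not an additional ingredient.
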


\begin{proof}

Assume $d_B\ge0$.  This implies $ K_\omega\cdot B\le B^2<0$.  Using the adjunction formula to determine $K_\omega\cdot B$ provides the estimate $B^2\ge K_\omega\cdot B\ge 2g-2-B^2$ which leads to $g=0$.  By assumption $0>B^2$, hence $B^2=K_\omega\cdot B=-1$.  The result now follows from Lemma \ref{smoothK}.
\end{proof}

As discussed in Section \ref{mapinto}, the limit curve $C$ may have components lying in $V$.  However, by Lemma \ref{index}, we can generically avoid such curves.  This, together with Lemma \ref{jV}, restricts the appearance of curves in class $m\mathfrak V$, in particular in the non-stable case.  This issue will be considered in detail in the following. 

Let us consider briefly the case $A=\mathfrak V$ and $d_\mathfrak V<0$:  Then the results in Section \ref{generic} show, that for generic almost complex structures, the only curve in class $A$ is $V$.  Therefore, for generic $J\in\mathcal J_V$, $\mathcal K_V(J,\mathcal I_A=\emptyset)=\emptyset$ by Lemma \ref{index}.

Due to the remark preceding Lemma \ref{neg} we consider the following two cases: 
\begin{itemize}
\item $d_\mathfrak V\ge 0$ and $\mathfrak V^2\ge 0$ and
\item  $d_\mathfrak V< 0$ or $\mathfrak V^2=-1, d_\mathfrak V=0$.
\end{itemize}

{ $\bf d_\mathfrak V\ge 0$ \bf and $\bf \mathfrak V^2\ge 0$.} Consider a sequence $\{C_m,J_m,({\mathcal I_A})_m\}$.  Then Gromov compactness gives us a finite set of data $\{(\varphi_i,\Sigma_i,m_i,\mathcal I_i)\}$ with $\Sigma_i$ a connected compact Riemann surface, $\varphi_i$ a $J$-holomorphic map from $\Sigma_i$ to $X$ which is an embedding off of a finite set of points which may map into $V$ or contact $V$ in accordance with the data given by ${\mathcal I_i}$ and $m_i\in \mathbb N$.  We choose $(J,{\mathcal I_A})$ from a Baire set as discussed in the previous section, such that $\varphi_i(\Sigma_i)\cap \varphi_j(\Sigma_j)$ is finite for $i\ne j$ in accordance with the intersection product on homology for the classes $A_j,A_i$.  Furthermore, denoting the push forward of the fundamental class of $\Sigma_i$  by $A_i$, we obtain $A=\sum m_iA_i$ and the image $\cup \varphi_i(\Sigma_i)$ is connected and contacts all the data in ${\mathcal I_A}$.  Moreover, $\cup \mathcal I_i=\mathcal I_A$.  This implies that $\sum_id_k^i=d_k$ ($k\in\{1,2\}$) and $\sum_il_k^i=l_k$ ($k\in \{1,2,3\}$).  Further, we may assume that $2d_1^i+d_2^i-l_2^i-2l_3^i\le 2(d_{A_i}-l_{A_i})$ for each set $\mathcal I_i$.  This condition simply states that the dimension of the moduli space is larger than the degrees of the insertions, thus guaranteeing that curves exist.  An inequality in the opposite direction would provide too many constraints on curves in class $A_i$, thus effectively ruling out the existence of such a curve for generic $J$.

Consider any two pairs of points on $V$ with a prescribed contact order given in the initial data.  These must stay separate in the limit, as all of the initial data $\{\mathcal I_A\}_m$ lies in the same proper class for all $m$ as does the limit set $\mathcal I_A$.  Moreover, the components they lie in cannot limit to a multiple cover of the same curve, otherwise this component would need to meet more points than given by $l_{A_i}$.  In particular, this implies that $\sum_il_{A_i}\ge l_A$.

The properness of the initial data $\mathcal I_A$ as well as our estimate on the data in $\mathcal I_i$ allows the first estimate, the fact that $\cup \mathcal I_i=\mathcal I_A$ and $\sum_i l_{A_i}\ge l_A$ the final equality:
\begin{equation}\label{dai_ge_da}
2\sum  d_{A_i} \ge  \sum 2d_1^i+d_2^i-l_2^i-2l_3^i+2l_{A_i}= 2d_A.
\end{equation}
We have thus shown, that there is a Baire set of pairs $(J,\mathcal I_A)$ such that 
\begin{equation}
  \sum_i d_{A_i}\ge d_A.  
\end{equation}
On the other hand, the following Lemma (primarily proven by Taubes \cite{T}, see also \cite{DL}) holds, we provide a proof suited to the situation at hand:

\begin{lemma}\label{conv}For generic pairs $(J,{\mathcal I_A})$, either $\sum d_{A_i} < d_A$ or one of the following hold:

\begin{enumerate}
\item $\{C_m\}$ has a subsequence which converges to a $J$-holomorphic submanifold $C$ with fundamental class $A$.  Moreover, the limit curve $C$ intersects $V$ locally positively and transversely.  
\item $A$ is multiply toroidal.  Furthermore, the data given by Gromov convergence consists of one triple $(\varphi_1,\Sigma_1,m_1)$ where $\Sigma_1$ is a torus, $\varphi_i$ embeds $\Sigma_1$ and $A=m_1A_1$.  This includes the case $A=m\mathfrak V$ if $V$ is square 0 torus.
\item Except possibly if $A=m\mathfrak V$ ($m\ge 1$) and $V$ is a square 0 torus,  $C\not\subset V$.
\end{enumerate}
\end{lemma}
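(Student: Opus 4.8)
The plan is to prove the trichotomy by assuming its first alternative fails, i.e. that $\sum_i d_{A_i}\ge d_A$ (this is exactly the generic regime produced by inequality \eqref{dai_ge_da} just above), and then to pin down the Gromov limit by squeezing this against an algebraic expansion of $d_A$. First I would record the standing generic facts: by Lemma \ref{smoothK} every surviving component $A_i\ne\mathfrak V$ has $d_{A_i}\ge 0$; by Lemma \ref{neg} any component of negative self-intersection is an exceptional sphere ($A_i^2=-1$, $g=0$, $d_{A_i}=0$); by stability $d_{\mathfrak V}\ge 0$ and in this subsection $\mathfrak V^2\ge 0$; and by positivity of intersections for distinct $J$-holomorphic curves (see \cite{McS}) one has $A_i\cdot A_j\ge 0$ for $i\ne j$ and $\mathfrak V\cdot A_i\ge 0$. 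I would dispose of conclusion (3) separately and immediately using Lemma \ref{dim}: a limit with a component lying in $V$ is a curve with at least two levels, and the corresponding stratum has dimension $-m<0$ when $d_{\mathfrak V}\ge 0$, hence is generically empty; the only phenomenon of genuine codimension zero that survives is $A=m\mathfrak V$ with $V$ a square-zero torus, which is exactly the stated exception.

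The algebraic heart is the identity obtained by writing $A=\sum_i m_iA_i+m\mathfrak V$ and expanding the intersection form,
\[
d_A=\sum_i m_id_{A_i}+m\,d_{\mathfrak V}+\binom{m}{2}\mathfrak V^2+\sum_i\binom{m_i}{2}A_i^2+\sum_{i<j}m_im_jA_i\cdot A_j+m\sum_i m_i\,\mathfrak V\cdot A_i .
\]
Granting for the moment that every multiply covered component has $A_i^2\ge 0$ (so that all terms after $\sum_i m_id_{A_i}$ are nonnegative), the identity gives $d_A\ge\sum_i m_id_{A_i}\ge\sum_i d_{A_i}$; combined with the assumed $\sum_i d_{A_i}\ge d_A$, every inequality collapses to an equality. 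Hence $m\,d_{\mathfrak V}=0$, the whole collection of quadratic terms vanishes, and $(m_i-1)d_{A_i}=0$ for each $i$. By positivity, the vanishing of $\sum_{i<j}m_im_jA_i\cdot A_j$ and of $m\sum_i m_i\,\mathfrak V\cdot A_i$ forces distinct components to be pairwise geometrically disjoint.

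Feeding this rigidity into the connectedness of the Gromov limit finishes the structural dichotomy: a limit of connected curves is connected, yet distinct components are now disjoint, so there is a single geometric component. If $m=0$ the limit is $A=m_1A_1$; for $m_1=1$ this is the embedded submanifold of conclusion (1), while $m_1>1$ forces $d_{A_1}=0$ and, from $\binom{m_1}{2}A_1^2=0$, also $A_1^2=0$, so $K_\omega\cdot A_1=0$ and adjunction gives $g(A_1)=1$, i.e. the multiply toroidal case (2). If $m\ge 1$, then connectedness together with $\mathfrak V\cdot A_i=0$ admits no further component, so $A=m\mathfrak V$ with $d_{\mathfrak V}=0$ and $\mathfrak V^2=0$, the square-zero torus appearing in (2) and as the exception of (3). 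To complete conclusion (1) I would read off embeddedness from the adjunction equality (the domain genus is fixed at $g(A)$ in the universal model, forcing the singularity count to vanish) and deduce transversal, locally positive intersection with $V$ from the contact-order local model of Section \ref{contactorder} for generic $J$.

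The main obstacle is precisely the parenthetical assumption in the squeeze: a priori an exceptional sphere could occur with multiplicity $m_i>1$, contributing $-\binom{m_i}{2}<0$ and wrecking the sign bound. Controlling this is where the delicate analysis of Taubes \cite{T} is indispensable; the cleanest route is to prove first that in a Gromov limit of embedded symplectic surfaces no negative-square class is multiply covered, so that every multiply covered component is forced to be a square-zero torus and the nonnegativity invoked above is legitimate. A secondary technical point is checking that the persistence of the fixed contact data through the limit — guaranteed because the initial data stays in a single proper class $[\mathcal I_A]$ — is consistent with the homological orthogonality just derived, so that the single surviving component genuinely carries all of $\mathcal I_A$.
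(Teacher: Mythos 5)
Your overall strategy --- expand $d_A$ through the intersection form, squeeze against the generic inequality $\sum_i d_{A_i}\ge d_A$, and convert the resulting equalities into pairwise disjointness plus connectedness of the limit --- is exactly the paper's strategy, and your algebraic identity for $d_A$ is correct. But the proof as written has a genuine gap, and it is precisely the one you flag yourself: you \emph{grant} that no negative-square class occurs with multiplicity $m_i>1$, so that $\sum_i\binom{m_i}{2}A_i^2\ge 0$, and you defer the justification to an unproven structural claim (``in a Gromov limit of embedded symplectic surfaces no negative-square class is multiply covered''), to be extracted somehow from Taubes. Nothing in your argument, nor in the lemmas you invoke, delivers that claim: Lemma \ref{neg} only constrains which classes can appear ($B_i^2=-1$, $g=0$), not the multiplicities with which Gromov compactness produces them. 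So your squeeze is established only conditionally, and the conditional hypothesis is left open.

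The paper closes exactly this hole by pure algebra, with no a priori control of the multiplicities. Write $A=m\mathfrak V+\sum_i m_iB_i+\sum_i r_iA_i$ with $B_i^2=-1$ the negative-square components. First, if $A\cdot B_i<0$ for some $i$, positivity of intersections applied to the connected embedded curves $C_m$ (which represent $A$) forces $A=m_iB_i$, and Lemma \ref{neg} then gives $m_i=1$ generically, so the limit is already an embedded exceptional sphere and the lemma holds. Otherwise $A\cdot B_i\ge 0$ for every $i$, and then all terms in the expansion of $2d_A$ involving the $B_i$ can be regrouped as
\[
\sum_i\left(2m_iA\cdot B_i-2m_i^2B_i^2+(m_i^2-m_i)B_i^2\right)\;=\;\sum_i\left(2m_iA\cdot B_i+m_i^2+m_i\right)\;\ge\;0,
\]
using $B_i^2=-1$: the dangerous contribution $(m_i^2-m_i)B_i^2$ (negative when $m_i>1$) is absorbed by $-2m_i^2B_i^2=+2m_i^2$, which is borrowed from the nonnegative quantity $2m_iA\cdot B_i$. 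This yields $2d_A\ge 2md_{\mathfrak V}+(m^2-m)\mathfrak V^2+2\sum_i r_id_{A_i}+\dots$ unconditionally, so multiply covered exceptional spheres never obstruct the squeeze; the rest of your analysis (equality forces disjointness, connectedness forces a single geometric component, the toroidal and $A=m\mathfrak V$ exceptions, transversality to $V$ via the local model) then goes through essentially as you wrote it. Replacing your ``granting'' paragraph by this case split and regrouping would make the proof complete and coincide with the paper's.
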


\begin{proof}We decompose the class $A$ as before, however distinguishing two types of classes as follows:  Let $B_i$ denote components with negative square, $A_i$ components with nonnegative square.  Then write $A=m\mathfrak V +\sum m_iB_i+r_iA_i$.  In the following we will allow the case $A=m\mathfrak V$.  

Lemma \ref{neg} ensures, that we can find a generic set of almost complex structures such that $B_i^2=-1$ and $g(B_i)=0$ for all $i$.  This in particular ensures, that $B_i\ne B_j$ for $i\ne j$ due to positivity of intersections.  Moreover, if $A\cdot B_i<0$ for some $i$, then we may assume that this holds for $C_m$ with sufficiently large $m$ in the sequence $\{C_m, J_m, ({\mathcal I_A})_m\}$.  However, each $C_m$ is a connected $J_m$-holomorphic submanifold representing $A$, hence  again by positivity of intersections, $A\cdot B_i<0$ can only occur if $A=m_i B_i$ for some $i$.  In particular, $A\ne\mathfrak V$.  Again, by Lemma \ref{neg}, it follows that $m_i=1$ generically.  Thus $A$ would be represented by an embedded submanifold and the proof is done.

Assume in the following, that $A\cdot B_i\ge 0$ for all $i$.  Lemma \ref{smoothK} shows, that we can find a generic set of pairs $(J,\mathcal I_A)$ such that $d_{A_i}\ge 0$ for  each curve in class $A_i$, the same holds for $\mathfrak V$ by assumption.  

Further, if $d_{A_i}\ge 0$ and $A_i^2\ge 0$, then $d_{r_iA_i}\ge 0$ for any positive integer $r_i$:
\[
0\le 2d_{A_i}\le 2r_id_{A_i}=-K_{\omega}\cdot(r_iA_i)+r_iA_i\cdot A_i\le
\]
\[
\le -K_{\omega}\cdot (r_iA_i)+r_i^2A_i\cdot A_i=2d_{r_iA_i}.
\]
Note that this holds in particular for $m{\mathfrak V}$.

For such a generic choice of $(J,\mathcal I_A)$, let $C$ be a connected curve representing $A$, which meets the initial data $\mathcal I_A$.  Then
\[
2d_A=-K_{\omega}\cdot(m{\mathfrak V})+\sum_i-K_\omega\cdot(m_iB_i)+\sum_i-K_{\omega}\cdot(r_iA_i)+
\]
\[
+m^2{\mathfrak V}^2+\sum_im_i^2B_i^2+\sum_ir_i^2A_i^2+2\sum_im{\mathfrak V}\cdot m_iB_i+2\sum_im{\mathfrak V}\cdot r_iA_i+
\]
\[
+2\sum_{i> j}m_im_jB_i\cdot B_j+2\sum_{i> j}m_ir_jB_i\cdot A_j+2\sum_{i> j}r_ir_jA_i\cdot A_j
\]
\[
\ge 2md_{\mathfrak V}+ (m^2-m){\mathfrak V}^2+2\sum_i r_id_{A_i}+2\sum_im{\mathfrak V}\cdot r_iA_i+2\sum_{i> j}r_ir_jA_i\cdot A_j+
\]
\[
+\sum_i (m_i^2-m_i)B_i^2+2\sum_{i> j}m_im_jB_i\cdot B_j+2\sum_{i> j}m_ir_jB_i\cdot A_j+2\sum_im{\mathfrak V}\cdot m_iB_i.
\]
Consider the terms in the last line.  Recalling that $B_i^2=-1$, they can be rewritten as 
\[
\sum_i 2m_iA\cdot B_i-2m_i^2B_i^2+(m_i^2-m_i)B_i^2\;=\;\sum_i 2m_iA\cdot B_i+m_i^2+m_i\;\ge 0
\]
and thus we obtain the estimate
\[
2d_A\ge 2d_{\mathfrak V}+2\sum_i d_{A_i}.
\]
Hence either $d_A> d_{\mathfrak V}+\sum_i d_{A_i}$ or  the following hold:
\begin{itemize}
\item $m_i=0$ for all $i$, i.e. there are no components of negative square,
\item $A_i\cdot A_j=0=A_i\cdot {\mathfrak V}$ for $i\ne j$,
\item  $m=1$ or $d_{\mathfrak V}=0$ and ${\mathfrak V}^2=0$ and
\item $r_i=1$ or $d_{A_i}=0$ and $A_i^2=0$.
\end{itemize}

The limit curve is connected as we started with a connected curve.  The second result shows that $A=rA_1$ or $A=m\mathfrak V$.  The last two refine this to show that the curve $C$ representing $A$ is an embedded $J$-holomorphic submanifold with a single non-multiply covered connected component meeting the initial data $\mathcal I_A$ with $J\in \mathcal J_V$ or $d_A=0$.  

Thus we are done if $A\ne {\mathfrak V}$ and $d_A>0$.  Now consider the following cases:
\begin{enumerate}

\item $A\ne {\mathfrak V}$ and $d_A=0$:  The results above show that this implies either $A=m\mathfrak V$ with $m\ge 2$ and $\mathfrak V^2=0$, i.e. $A$ is multiply toroidal or that $A\ne m\mathfrak V$ is (multiply) toroidal or $A$ is represented by an embedded $J$-holomorphic submanifold as stated above.  In the latter two cases we have $C\not\subset V$.

\item $A={\mathfrak V}$:  The limit curve is clearly an embedded curve under all circumstances, only its placement relative to $V$ is an issue.  If $C\ne V$, then we are done.  If $C=V$, then either a generic choice of $\mathcal I_A$ will prevent this limit from occurring due to dimension reasons, see Lemma \ref{index}, or $V$ is a square 0 torus.

\end{enumerate}

If $A\cdot\mathfrak V>0$, we can perturb $C$ to be transverse to $V$, see \cite{M4}, \cite{M5}.  If $A\cdot\mathfrak V=0$, then either the curves do not meet or we are in the toroidal case again.

\end {proof}

\subsection{Non-stable Hypersurfaces and Exceptional Curves} Assume that $V$ is non-stable or that $V$ is an exceptional sphere.  We would like to argue as in the stable case.  However, now the case $A_i=\mathfrak V$ must be given separate consideration.  Lemma \ref{jV} resp. results on exceptional curves show,  that  we can find a generic set of almost complex structures, such that $V$ is rigid and there are no other curves in class $\mathfrak V$.  In the following, we choose only complex structures from this set.  

As before, we can consider a sequence $\{C_m,J_m,(\mathcal I_A)_m\}$ and obtain the same result as above from Gromov compactness with one exception:  The case $A_i=\mathfrak V$ with $m_i\ge 1$ must be considered closer:  Even though we are working in the case $d_\mathfrak V<0$, it is possible for a multiple class $m\mathfrak V$ to have $d_{m\mathfrak V}\ge 0$.  For this reason, we will distinguish the following two objects:
\begin{enumerate}
\item Classes $A_i=\mathfrak V$ with $m_i>1$ which correspond to components of the curve $C$ in class $m\mathfrak V$, but which are NOT multiple covers of a submanifold in class $\mathfrak V$.  If $\mathfrak V^2< 0$, then positivity of intersections shows that any curve $C$ can contain at most one component in class $m\mathfrak V$  for all $m$ and this component must coincide with the manifold $V$ (It could be a multiple cover of course.).  This situation was studied in greater generality in \cite{Bi}.  Furthermore, if  $\mathfrak V^2\ge 0$ and a class $A_i=m\mathfrak V$ occurs, then the results of Lemma \ref{genericA} apply.  We may therefore assume , that $A_i^2\ge 0$ in the following.
\item The specific "class" $mV$ which corresponds to components which have as their image the hypersurface $V$.  (Of course, the homology class associated to this "class" is $m\mathfrak V$.  We wish to emphasize the distinction between the geometric object associated to this "class" and the previous one.)  This can only occur if $d_{m\mathfrak V}<0$.
\end{enumerate}
Note further, that we can choose our almost complex structures such that the components corresponding to $mV$ are rigid, while those in $m_i\mathfrak V$ are not.  Such a decomposition is not necessary in the case $d_\mathfrak V\ge 0$, as $V$ acts no differently than any other curve in the class $\mathfrak V$.  In the current situation, the specific hypersurface $V$ is singled out in the class, while all others can be excluded.

Lemma \ref{index} allows us to exclude curves of the second type for generic initial data.  

Moreover, if $V$ is an exceptional sphere, $d_{m\mathfrak V}<0$ if $m\ge 2$.  Hence this rules out any classes of first type for exceptional curves.  Any appearance of the class $\mathfrak V$ in a decomposition of $A$ must therefore stem from a cover of $V$ as there is but one representative of $\mathfrak V$.  

Hence, in all cases the calculations from the previous section apply.  We again obtain an embedded curve or $A$ is multiply toroidal.  This proves compactness of the relative space $\mathcal R_V(J,\mathcal I_A)$ in this case.

The compactness results from this section are summarized in the following Lemma:
   
\begin{lemma}\label{finiteK}Fix a symplectic hypersurface $V$, a class $A\in H_2(X)$ and assume $A$ is not (multiply) toroidal.  Then there exists a set of second category
$U\subset \mathcal J_V\times [\mathcal I_A]$ with the following properties:  When a pair $(J,\mathcal I_A)$ is chosen from $U$, then

\begin{enumerate}
\item $\mathcal K_V(J,\mathcal I_A)$ is a finite collection of points.  
\item Any submanifold in $\mathcal K_V(J,\mathcal I_A)$ meets $V$ locally positively and transversely.  
\item $C\not\subset V$  
\item Assertion 1 is an open condition in $J_V\times [\mathcal I_A]$.
\end{enumerate}
\end{lemma}

\begin{proof}Assertions (2) and (3) are proven in the previous Section by direct calculation.  Moreover, these calculations show also that with the exception of multiply toroidal classes, the spaces $\mathcal K_V$ are compact.  Together with Lemma \ref{smoothK} it follows that $\mathcal K$ is finite.  Furthermore, a direct application of the implicit function theorem gives part 2 of the Lemma.

\end{proof}

\section{\label{relinv}The relative Ruan Invariant}

This section is devoted to the precise definition of the relative Ruan invariant.  We first describe how to define a number associated to the spaces $\mathcal K_V(A,J,\mathcal I_A)$.  In the following, we show that this is a deformation invariant of the symplectic structure $\omega$ on $X$.

Let $V$ be a stable symplectic hypersurface.  The constructions in this section again assume that no curves can limit into the fixed hypersurface $V$.  Moreover, throughout we assume that $A$ is not multiply toroidal.  

\subsection{The number $Ru^V(A,[\mathcal I_A])$}

For a fixed $A\in H_2(X)$ which is not multiply toroidal, consider the set $\mathcal K_V(J,\mathcal I_A)$ for generic pairs $(J,\mathcal I_A)$ and a fixed class $[\mathcal I_A]$.  Then define the number
\begin{equation}
Ru^V(A,[\mathcal I_A])=\sum_{C\in \mathcal K_V(J,\mathcal I_A)}r(C,\mathcal I_A).
\end{equation}

\subsection{The Definition of $r(C,\mathcal I)$}

This number is determined through an analysis of the behavior of the operator $D$ (Eq. \ref{D}) under perturbation to a $\mathbb C$-linear operator.  The methods used in this Section can be found in Kato \cite{K} (Chaps. II and VII), McDuff-Salamon \cite{McS} (Appendix A) and the papers of Taubes in \cite{T}.  To fully understand this, consider the set $\mathcal F_{\mathbb R}$ of real linear Fredholm operators from a Banach space $X$ to a Banach space $Y$ of index n.  This space can be decomposed into components $\mathcal F_{\mathbb R}^k$ consisting of those operators with kernel of dimension k.  The minimal codimension of $\mathcal F_{\mathbb R}^{k+1}$ in $\mathcal F_{\mathbb R}^k$ is given by 
\begin{equation}\label{codim}
n+2k+1.
\end{equation} 

Consider now a real analytic perturbation of $D$.  More precisely, consider a path of operators $A_t:[0,1]\rightarrow \mathcal F_{\mathbb R}$ with the following properties:
\begin{enumerate}
\item $A_t$ depends real-analytically on the parameter $t$,
\item $A_0=D$,
\item $A_t-A_0$ is a bounded $0$th order deformation of $D$ and 
\item $A_1$ is $\mathbb C$-linear.
\end{enumerate}
The third condition ensures that we stay in $\mathcal F_{\mathbb R}$, more precisely we even stay within the set of elliptic operators, as $D$ is elliptic.  Moreover, each of the operators has compact resolvent.  The operator $D$ is not $\mathbb C-$ but $\mathbb R-$linear, thus we view it formally as a map between the underlying real bundles.  However, each of these bundles carries a holomorphic structure, hence we can consider an analytical extension of this path.  This can be achieved by choosing a Sobolev completion of the domain and the range of the operator $D$ making $D$ a bounded operator.  Then we can extend this real analytical perturbation to an analytical perturbation over a domain $U\subset \mathbb C$ containing $[0,1]$ in its interior.  This can be done such that the analytical perturbation preserves the third condition above.  This is now a perturbation in $\mathcal F_{\mathbb C}$.  Applying the results in Kato, in particular Sections II.1 and VII.1, we conclude that for the real analytic path $A_t$ the following hold:
\begin{enumerate}
\item Either the kernel of $A_t$ is nonempty for all $t$ or it is nonempty for at most finitely many $t$.  This result is relevant in the case $n=0$ and $k=0$:  The path $A_t$ will intersect the component with kernel of dimension $\ge 1$ transversally.  Moreover, the dimension count \ref{codim} shows that we can choose generic perturbations which intersect only the component with $k=1$ but not any components with $k\ge 2$.  Note that $A_1$ is $\mathbb C$-linear, hence the kernel at $t=1$ cannot have dimension 1.
\item The dimension of the kernel of $A_t$ can only change at a finite number of $t$.  If $n\ge 1$ or $k\ge 1$, then we can again choose a generic perturbation which will not intersect any of the higher codimension components.
\end{enumerate}

Hence, we can choose a generic real analytical path $A_t$ such that for $(n,k)=(0,0)$ we have only a finite number of $t$ at which the kernel has dimension 1 and for $n\ge 1$ or $k\ge 1$ we can ensure that the dimension of the kernel is preserved along the whole path over $[0,1]$.

We shall always assume that we have chosen an almost complex structure $J$ such that $D$ has trivial cokernel.  Hence we consider only $n=k$ in the following.  This implies that for a fixed value $n$, objects with a larger kernel will be of codimension $3k+1$ at least.  This allows the following constructions:  
\begin{itemize}
\item If $n>0$, any continuous path connecting $D=A_0$ to a $\mathbb C$-linear operator and a generic real analytical path $A_t$ as described above bound a disk such that every operator in the disk has the same size kernel and cokernel.   In particular, we can use the kernel of $A_1$, which as a $\mathbb C$-linear operator carries a natural orientation induced by $J$, to uniformly orient all of the kernels in the disk.   Moreover, as this orientation is defined by the almost complex structure $J$, we can choose any generic $\mathbb C$-linear operator and obtain the same orientation of $\ker(D)$. 

\item If $n=0$, then two paths can differ by the number of crossing points of the codimension 1 stratum $\mathcal F_{\mathbb R}^1$.  However, each curve connecting $D$ and $A_1$ must have the same number of crossings mod 2 for a generically chosen $A_1$.  Moreover, the almost complex structure orients the 0-dimensional kernel of any $\mathbb C$-linear operator, this orientation must be equivalent and is given by associating $\pm 1$ to each point in the kernel.  Hence any path connecting two $\mathbb C$-linear operators $A_1$ and $A_1'$ must cross the stratum $\mathcal F_{\mathbb R}^1$ an even number of times, i.e. any point which has its orientation reversed must have it reversed again.  Thus the number of crossings mod 2 is generically independent of the choice of $A_1$.  This defines the spectral flow mod 2 for a real analytical path:  Let $N$ be the number of crossing points, then the spectral flow mod 2 for the path $A_t$ is $(-1)^N$.  

Now consider any continuous path with $N$ number of crossings.  Then as in the case $n>0$, any real analytical path $A_t$ with $N$ crossings of the same stratum bound a disk.  Thus the spectral flow can be computed for a generic continuous path.

We have shown that this number is independent of the generic choice of continuous path and endpoint.

\end{itemize}

This is the general setup for our definition of $r(C,\mathcal I)$.  We assume in the following discussion, that $C\ne V$.

{$\bf \; d_A=0$:}  Choose the $\mathbb C$-linear operator $A_1$ such that it has trivial kernel and cokernel.  This can be achieved generically because $A_1$ is $\mathbb C$-linear and the dimension count \ref{codim}.  Now define
\begin{equation}
r(C,\mathcal I)=(-1)^N
\end{equation}
for a generic continuous path $A_t$ connecting $D$ and $A_1$.  Note in particular, that we can use the path to define an orientation on the kernel of $D$ determined by the almost complex structure.

{$\bf \;d_A>0$}
To each point in the initial data $\mathcal I_A$ we will assign a space as follows:
\begin{itemize}
\item $\Omega_{d_1}$:  To each point $z\in \Omega_{d_1}$ associate the fiber $N\vert_z$ of the normal bundle $N$ of the curve $C$.  This defines a direct sum $\mathfrak E_{d_1}=\oplus_{z\in \Omega_{d_1}}N\vert_z$.
\item $\Gamma_{d_2}$:  Consider the intersection point of the curve $C$ and an element $\gamma\in\Gamma_{d_2}$.  Through a slight perturbation of $\gamma$, we may assume that $\gamma$ intersects $C$ in such a manner, that the quotient $N\vert_z/ T\gamma$ is well defined.  In other words, by perturbing slightly, we ensure that $T\gamma\vert_z$ is a line in the normal bundle fiber over $z$.  Then associate to each $\gamma$ the space $\mathfrak g_{\gamma}=N\vert_z/ T\gamma$.  Each $\mathfrak g_\gamma$ is oriented, hence the space $\mathfrak E_{d_2}=\oplus_{\gamma\in\Gamma_{d_2}}\mathfrak g_{\gamma}$ is an oriented ordered direct sum of oriented lines.
\item $\Omega_{l_1}$:  This set consists of pairs, as will all of the following sets.  They consist of a geometric datum and an intersection order.  Recall the definition of the map $G_k$ given by Def \ref{contactmap}.  To each pair $(z,s)$ associate the space $N\vert_z\otimes S_s$.  This space is again oriented.  Hence $\mathfrak E_{l_1}=\oplus_{(z,s)\in \Omega_{l_1}}N\vert_z\otimes S_s$ is oriented.
\item $\Gamma_{l_2}$:  Assign the space $\mathfrak E_{l_2}=\oplus_{(\gamma,s)\in\Gamma_{l_2}}\mathfrak g_{\gamma}\otimes S_s$.
\item $\Upsilon_{l_3}$:  In this set, we make no restrictions on the contact location with $V$.  Note that $S_k$ not only encodes contact order, but also encodes a location in the hypersurface $V$, see the discussion in Section \ref{contactorder}.  Hence we define the space $ \mathfrak E_{l_3}=\oplus_{(V,s)\in\Upsilon_{l_3}} S_s$.  The corresponding evaluation map is similar to the one constructed in Section \ref{contactorder}, we do not specify a fiber $F_z^C$.  

\end{itemize}

Consider the linear map $H:\ker(D)\rightarrow \mathfrak E_{d_1}\oplus \mathfrak E_{d_2}\oplus \mathfrak E_{l_1}\oplus \mathfrak E_{l_2}\oplus \mathfrak E_{l_3}$, which is composed of evaluation maps and maps $G_k$ defined in Def. \ref{contactmap}.  Choose a generic continuous path $A_t$ and use this path to orient $\ker(D)$ as described above.  Then $H$ is a map between oriented vector spaces, moreover our calculations in the previous Sections show that this map is an isomorphism for suitably generic $(J,\mathcal I)$.  Define
\begin{equation}
r(C,\mathcal I)=\mbox{ sign(det(}H)).
\end{equation}

\begin{lemma}
For generic $(J,\mathcal I)\in \mathcal J_V\times [\mathcal I]$ the number $r(C,\mathcal I)$ is well-defined when $C\in \mathcal K_V(J,\mathcal I)$ .
\end{lemma}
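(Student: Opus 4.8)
The plan is to isolate the auxiliary choices entering the definition of $r(C,\mathcal I)$ and to show the value is insensitive to each. Two choices are made: the $\mathbb C$-linear endpoint $A_1$ and the path $A_t$ joining $D$ to $A_1$; when $d_A>0$ there is the additional implicit claim that $H$ is an isomorphism. Throughout I take $(J,\mathcal I)$ in the second-category set supplied by Lemma \ref{smoothK} and Lemma \ref{finiteK}, so that $C$ is a non-degenerate embedded submanifold with $C\not\subset V$ and $A$ is not multiply toroidal; then $\operatorname{coker}D=0$, so $\dim_{\mathbb R}\ker D=\operatorname{ind}_{\mathbb R}D=2d_A$ is even, which is exactly what allows a $\mathbb C$-linear endpoint with $\dim\ker A_1=\dim\ker D$. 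The essential structural observation is that the admissible deformations (those $A_t$ satisfying conditions (1)--(3): real-analytic, $A_0=D$, a bounded $0$th order deformation of $D$) form an affine space, so affine combinations of admissible paths are again admissible. This makes any two admissible paths with common endpoints homotopic rel endpoints inside the admissible class and lets me build explicit interpolating families.

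Consider first $d_A>0$. I would record that $H$ is an isomorphism: non-degeneracy (Definition \ref{nondeg}) makes $D\oplus ev_{\mathcal I_A}$ surjective, which unwinds to injectivity of $H$ on $\ker D$, while the index computation gives $\dim_{\mathbb R}\ker D=\dim_{\mathbb R}(\mathfrak E_{d_1}\oplus\cdots\oplus\mathfrak E_{l_3})$; equality of dimensions forces $H$ to be an isomorphism, so $\det H\neq0$ and, the target $\mathfrak E$ being canonically oriented, $\operatorname{sign}(\det H)$ depends only on an orientation of $\ker D$. To see that orientation is canonical, take two admissible paths $A_t,A_t'$ ending at $\mathbb C$-linear operators $A_1,A_1'$ and form the interpolating disk $\Phi(s,t)=(1-s)A_t+sA_t'$. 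Its boundary collapses to $D$ along $t=0$, is $A_t$ and $A_t'$ along $s=0,1$, and is the $\mathbb C$-linear arc $(1-s)A_1+sA_1'$ along $t=1$. Here $k=n=2d_A\ge2$, so by \ref{codim} the locus where $\dim\ker$ exceeds $n$ has codimension $3k+1\ge7$ and is avoided after a generic perturbation of $\Phi$; over the resulting disk the kernels form a rank-$n$ vector bundle, oriented along the $\mathbb C$-linear edge by $J$. As the disk is simply connected, this orientation propagates to a single orientation of $\ker D$, independent of whether one arrives along $A_t$ or $A_t'$. Thus $\operatorname{sign}(\det H)$ is well defined.

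For $d_A=0$ we have $n=k=0$, so $\ker D=\operatorname{coker}D=0$ and $(-1)^N$ compares the orientation of the (canonically trivial) determinant line of $D$ transported along $A_t$ with the complex orientation $J$ places on the determinant line of $A_1$; a transverse crossing of $\mathcal F_{\mathbb R}^1$ is where the transported orientation reverses. Fixing the endpoint $A_1$, any two admissible paths from $D$ to $A_1$ are joined by the affine homotopy $\Phi(s,t)$ above; after a generic perturbation $\Phi$ misses the strata $\mathcal F_{\mathbb R}^k$ with $k\ge2$ (codimension at least $3$ by \ref{codim}) and is transverse to $\mathcal F_{\mathbb R}^1$, so $\Phi^{-1}(\mathcal F_{\mathbb R}^1)$ is a compact $1$-manifold whose boundary consists of the crossings along $A_t$ and along $A_t'$; hence these crossing numbers agree mod $2$. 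To change the endpoint, the arc $(1-s)A_1+sA_1'$ stays $\mathbb C$-linear, so it never meets $\mathcal F_{\mathbb R}^1$ (a $\mathbb C$-linear kernel has even real dimension) and adds no crossings, while the $J$-orientation it carries is canonical. Therefore $(-1)^N$ is independent of both $A_t$ and $A_1$.

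I expect the main obstacle to be the transversality input underlying every crossing and avoidance statement above: one must verify that generically the family meets $\mathcal F_{\mathbb R}^1$ transversally and misses the higher strata, both for a path $A_t$ and for the interpolating disk $\Phi$, all while remaining inside the affine admissible class. Making this rigorous requires a parametrized Sard--Smale argument applied to the universal family of such operators, together with the analytic perturbation theory of Kato that furnishes the stratification of $\mathcal F_{\mathbb R}$ and the codimension bound \ref{codim}; the evenness $\dim\ker D=2d_A$ and the resulting compatibility with the complex orientation at the $\mathbb C$-linear endpoint are the structural facts that keep the comparison coherent.
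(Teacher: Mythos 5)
Your proposal is correct and takes essentially the same approach as the paper: the paper's own proof is a one-line appeal to the genericity results of the preceding sections (which give that $D$ has trivial cokernel and that $H$ is an isomorphism) together with the homotopy discussion immediately preceding the lemma --- the stratification bound \ref{codim}, the disk bounded by two admissible paths, the orientation induced by a $\mathbb C$-linear endpoint, and the mod~2 spectral flow when $d_A=0$ --- which is exactly what you spell out. Your affine interpolation $\Phi(s,t)=(1-s)A_t+sA_t'$ is simply an explicit construction of the disk whose existence the paper asserts.
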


\begin{proof} We need to show that the linear map has a  determinant with a well-defined sign for generic $(J,\mathcal I)\in \mathcal J_V\times [\mathcal I]$.  This follows from the genericity results obtained in the previous Sections and the homotopy properties discussed above.

\end{proof}

Note that this definition agrees with Taubes' definition if $l_A=0$ as well as in the case $\mathcal I_A=\emptyset$, albeit with a different underlying set of almost complex structures.

\subsection{Invariant Properties of $Ru^V(A,[{\mathcal I_A}])$}

Given the triple $(X,V,\omega)$ we denote its symplectic isotopy class by $[X,V,\omega]$.  This class contains all triples $(X,\tilde V,\tilde \omega)$ such that there exists a  smooth one parameter family $(X,V_t,\omega_t)$ with
\begin{enumerate}
\item $(X,V_0,\omega_0)=(X,V,\omega)$,
\item $(X,V_1,\omega_1)=(X,\tilde V,\tilde \omega)$,
\item $\omega_t\in \mathcal S_X^{V_t}$ and
\item $[\omega_t]=[\omega]\in H^2(X)$.
\end{enumerate}
The triple $(X,V,\omega)$ is deformation equivalent to $(\tilde X,\tilde V,\tilde \omega)$ if there exists a diffeomorphism $\phi:\tilde X\rightarrow X$ such that $[\tilde X,\phi^{-1}(V),\phi^*(\omega)]=[\tilde X,\tilde V,\tilde \omega]$.

\begin{theorem}\label{invN}
The number $Ru^V(A,[{\mathcal I_A}])$ depends only on the deformation class of $(X,V,\omega)$, the class $A\in H_2(X)$, the initial class $[\mathcal I_A]$ and the ordering of the data in the sets $\Gamma_*$.  In particular, it does not depend on a particular choice of $(J,\mathcal I_A)$.
\end{theorem}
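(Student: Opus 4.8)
The plan is to prove invariance by a parametrized cobordism argument: first I fix $(X,V,\omega)$ and show that $Ru^V(A,[\mathcal I_A])$ is independent of the generic pair $(J,\mathcal I_A)$, and then I upgrade to the full deformation class. Since $Ru^V(A,[\mathcal I_A])=\sum_{C}r(C,\mathcal I_A)$ is a signed count of the finite, transversally cut-out set $\mathcal K_V(A,J,\mathcal I_A)$ (Lemma \ref{smoothK}, Lemma \ref{finiteK}), it suffices to exhibit, for any two generic pairs, an oriented compact $1$-dimensional cobordism between the corresponding zero-dimensional sets whose two boundary pieces carry the signs defining the two invariants.

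Concretely, given generic $(J_0,\mathcal I_A^0)$ and $(J_1,\mathcal I_A^1)$ in $\mathcal J_V\times[\mathcal I_A]$, I would choose a smooth path $(J_t,\mathcal I_A^t)_{t\in[0,1]}$ joining them inside $\mathcal J_V\times[\mathcal I_A]$ and form the parametrized space $\mathcal K_V^{[0,1]}=\{(t,C):C\in\mathcal K_V(A,J_t,\mathcal I_A^t)\}$. A parametric version of the transversality arguments of Lemma \ref{genericA} and Lemma \ref{smoothK} -- applying Sard--Smale to the universal model augmented by the path parameter -- shows that for a generic path the section $\mathcal F$ is transverse along the family, so $\mathcal K_V^{[0,1]}$ is a smooth $1$-manifold with boundary $\mathcal K_V(A,J_0,\mathcal I_A^0)\sqcup\mathcal K_V(A,J_1,\mathcal I_A^1)$. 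Interior critical points of the projection to $[0,1]$ correspond to cancelling pairs of points being born or annihilated, which leave the signed count unchanged.

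The main obstacle is the compactness of $\mathcal K_V^{[0,1]}$: I must rule out that a family of embedded curves degenerates by bubbling, by becoming reducible or multiply covered, or -- most seriously -- by acquiring components that sink into $V$. This is exactly where stability of $V$ enters. The estimates of Lemma \ref{conv} together with the level count of Lemma \ref{dim} and Lemma \ref{index}, raised by one parameter, show that for $d_{\mathfrak V}\ge 0$ the strata of multiple-level limit curves acquire dimension $1-m$ and the reducible or negative-square strata acquire negative dimension, so a generic path meets none of the configurations that could contribute a spurious boundary; the hypothesis that $A$ is not multiply toroidal excludes the remaining dangerous stratum of square-$0$ tori and their covers, which is handled separately by the relative Taubes count of Section \ref{taubesinv}. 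Hence $\mathcal K_V^{[0,1]}$ is a compact cobordism. To match signs I would verify that the determinant line of the parametrized operator $D\oplus ev_{\mathcal I_A}$ is orientable over $\mathcal K_V^{[0,1]}$ and that its induced boundary orientation agrees with $r(C,\mathcal I_A)$; this follows from the homotopy-invariance of the spectral-flow and determinant-line construction used to define $r(C,\mathcal I_A)$ in the preceding subsection, where the $\mathbb C$-linear model $A_1$ orients every kernel coherently, so that the two ends of each arc of the cobordism carry opposite signs and the total count is preserved.

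Finally, for the full statement I would reduce a deformation $(X,V_t,\omega_t)$ with $[\omega_t]$ constant to the previous case. A smooth ambient isotopy $\psi_t$ with $\psi_0=\mathrm{id}$ and $\psi_t(V)=V_t$ exists since $V_t$ is a smooth family of submanifolds; then $\psi_t^*\omega_t$ is a path in $\mathcal S_X^V$ whose cohomology class $\psi_t^*[\omega]=[\omega]$ is constant because $\psi_t$ is isotopic to the identity, and $\psi_t^*J_t\in\mathcal J_V$ for a compatible $J_t\in\mathcal J_{V_t}$. Because the count is manifestly invariant under the diffeomorphisms $\psi_t$ (it counts geometric submanifolds) and $\psi_t$ fixes $A$ and the class $[\mathcal I_A]$, the problem becomes invariance along a path of symplectic forms in $\mathcal S_X^V$ with fixed cohomology class; running the parametrized cobordism argument over this path -- now also varying $\omega$, which the transversality and compactness statements accommodate verbatim -- yields the asserted dependence only on $[X,V,\omega]$, the class $A$, the class $[\mathcal I_A]$, and the ordering of the data in the sets $\Gamma_*$.
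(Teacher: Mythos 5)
Your overall strategy is the same as the paper's: the paper also proves Theorem \ref{invN} by a parametrized cobordism, building an augmented universal model $\mathcal Y$ of $5$-tuples $(i,u,t,J,\mathcal I_A)$ over a section $\mathfrak s=(t,J_t,\mathcal I_A^t)$ of its space $\Gamma(\{\omega_t\},[\mathcal I_A])$ (with fixed cobordisms $X_\gamma$ between corresponding constraint curves playing exactly the role of your path of initial data), establishing transversality of the projection $\pi$ against the evaluation $T$ (with the same case distinction $A\ne\mathfrak V$ versus $A=\mathfrak V$, where $\xi=0$ must be treated specially), applying Sard--Smale to obtain a smooth compact $1$-dimensional cobordism $\Xi_{\mathfrak s}$, and deferring compactness to the stability arguments of Section \ref{structure}. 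Your explicit treatment of the spectral-flow orientations, of birth--death points, and of the reduction of a moving pair $(V_t,\omega_t)$ to a fixed $V$ by ambient isotopy is more detailed than what the paper writes (the paper varies $\omega_t$ inside the parametrized model and leaves the isotopy of $V$ and the sign bookkeeping implicit), and none of it conflicts with the paper's argument.

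There is, however, one concrete gap, and it sits at the step you yourself call the most serious. You assert that in a one-parameter family the strata of limit curves with extra levels ``acquire dimension $1-m$ \dots so a generic path meets none of the configurations that could contribute a spurious boundary.'' For $m=1$, the first and generic way a family can sink a component into $V$, your own count gives dimension $0$, not a negative number; a $0$-dimensional stratum is precisely the kind of set a generic path \emph{does} meet, at isolated values of $t$, and at such a $t_0$ a sequence $(t_i,C_i)\in\mathcal K_V^{[0,1]}$ could converge to a two-level configuration, destroying compactness of the cobordism and permitting the signed count to jump across $t_0$. So the inference as written does not prove what you need. What actually saves the argument --- and what the appeal to Lemma \ref{index} (Lemma 7.6 of \cite{IP4}) is really providing --- is that each additional level costs \emph{two} real dimensions: in the paper's normalization $d_A$ is half of the real dimension $2d_A$ of the unconstrained space of curves, so the $(m+1)$-level stratum cut down by $\mathcal I_A$ has real dimension $-2m$, hence real dimension $1-2m\le -1$ after adding the path parameter, and only then is it avoided by generic paths. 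You should either track the units in Lemma \ref{index} and Lemma \ref{dim} so that the per-level codimension is $2$, or give a separate argument excluding two-level limits in one-parameter families; with that correction your proof closes up and coincides with the paper's.
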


The proof of this Theorem will occupy the rest of this Section.  To begin, we consider the deformation invariance.  Consider a family of symplectic forms $\{\omega_t\}$ parametrized by $[0,1]$.  Let $(J_0,\mathcal I^0_A),(J_1,\mathcal I^1_A)\in \mathcal J_V\times [\mathcal I_A]$ be two pairs associated to $\omega_0$ and $\omega_1$ such that $J_*$ is compatible with $\omega_*$, $(J_*,\mathcal I^*_A)$ is suitably generic in the sense of the previous Sections and $[\mathcal I_A^1]=[\mathcal I_A^0]$.  

\begin{definition}Define the set $\Gamma(\{\omega_t\},[\mathcal I_A])$ to be the space of smooth sections $\mathfrak s = \{(t,J_t,\mathcal I^t_A)\}$ over $[0,1]$ such that 
\begin{enumerate}
\item $J_t$ is $\omega_t$-compatible, 
\item $\mathcal I_A^t\in [\mathcal I_A]$ and
\item  at $t=0,1$ we have the triples $(0,J_0,\mathcal I_A^0)$ and $(1,J_1,\mathcal I_A^1)$ with the associated pairs  chosen above.
\end{enumerate}
\end{definition}

The purpose of the following is to show that the number $Ru^V(A,[{\mathcal I_A}])$ is an invariant of the deformation class of the symplectic structure $\omega$ on $X$.  To prove this, we will extend the universal space $\mathcal U$.  Define a universal space $\mathcal Y$ similar to $\mathcal U$ consisting of Diff$(\Sigma)$ orbits of a 5-tuple $(i,u,t,J,\mathcal I_A)$ with the following additional properties:
\begin{itemize}
\item For $t=0,1$, only the values for $(J,\mathcal I_A)$ chosen initially are allowed.
\item For each corresponding pair of curves $\gamma_0,\gamma_1$ in the respective sets $\Gamma_{d_2}$, fix a smooth cobordism $X_\gamma$ of $\gamma_0$ to $\gamma_1$.  Repeat the same for the relative curves: For each corresponding pair of data $(\gamma_0, s), (\gamma_1,s)$ in the respective sets $\Gamma_{l_2}$, fix a smooth cobordism $X_\gamma$ of $\gamma_0$ to $\gamma_1$.  Require the intersection points of the image of $u$ and the data in $\Gamma^t_{d_2}\cup\Gamma^t_{l_2}$ to lie on the submanifold $\prod_{\gamma\in\Gamma_{d_2}\cup\Gamma_{l_2}}X_\gamma\subset X^{d_2+l_2}$. 

\end{itemize}

For a fixed section $\mathfrak s\in\Gamma(\{\omega_t\},[\mathcal I_A])$, define the space $\Xi_\mathfrak s$ via the pull-back diagram
\[
\begin{diagram}
\node{\Xi_\mathfrak s}\arrow{e}\arrow{s}\node{\mathcal Y}\arrow{s,l}{\pi}\\
\node{\mathfrak s}\arrow{e,b}{T}\node{\mathcal J_V\times [\mathcal I_A]}
\end{diagram}
\]
In other words, $\Xi_\mathfrak s$ is the collection of pairs $(t,C)$ of $t\in[0,1]$ and  submanifolds $C$, such that for each $t$, the submanifold $C\in\mathcal K(\mathfrak s(t))$ and meets the submanifold $\prod_{\gamma\in\Gamma_{d_2}\cup\Gamma_{l_2}}X_\gamma$ as described above.

\begin{lemma}\label{definv}Fix a smooth family $\{\omega_t\}$.  Then there exists a Baire set $U\in \mathcal J_V\times [\mathcal I_A]$ such that any two points in $U$ can be joined by a section $\mathfrak s:[0,1]\rightarrow\Gamma(\{\omega_t\},[\mathcal I_A])$ such that $\Xi_{\mathfrak s}$ is a smooth 1-dimensional manifold.  If $A$ is not multiply toroidal, then $\Xi_{\mathfrak s}$ is compact.

\end{lemma}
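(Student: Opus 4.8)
The plan is to treat $\Xi_{\mathfrak s}$ as a parametrized moduli space and run the standard cobordism argument in two stages: first establish that $\Xi_{\mathfrak s}$ is smooth of dimension $1$ by a Sard--Smale argument on the extended universal space $\mathcal Y$, and then establish compactness (for non-multiply-toroidal $A$) by transporting the Gromov-convergence analysis of Section \ref{structure} into the one-parameter family. The boundary of the resulting cobordism will be $\mathcal K_V(\mathfrak s(0)) \sqcup \mathcal K_V(\mathfrak s(1))$, which is what ultimately yields the invariance in Theorem \ref{invN}.

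For smoothness I would linearize the parametrized $\dbar$-section over $\mathcal Y$. The relevant operator is $D_u$ of Eq.~\ref{D} together with the evaluation and contact maps encoding $\mathcal I_A$, now augmented by variations in the parameter $t$, in $J_t$, and in the cobordism directions $X_\gamma$. Surjectivity of this augmented linearization at its zeroes is proved exactly as in Lemmas \ref{genericA}, \ref{genericV} and \ref{jV}: away from $V$ one perturbs $J$ freely as in the proof of Lemma \ref{genericA} (the generic case $A\ne\mathfrak V$), while along $V$ one uses the fixed-$j$ construction of Lemma \ref{genericV} together with the cobordisms $X_\gamma$ to kill the cokernel, and the case $A=\mathfrak V$ is supplied by Lemma \ref{jV}. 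Since the fibre moduli $\mathcal K_V(\mathfrak s(t))$ is cut out with index $0$ by Lemma \ref{smoothK} and we have adjoined the single parameter $t\in[0,1]$, the Sard--Smale theorem (after the usual Taubes trick through $C^l$-structures) produces a Baire set $U$ and, for generic sections $\mathfrak s$, makes $\Xi_{\mathfrak s}$ a smooth $1$-manifold.

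For compactness I would take a sequence $(t_n,C_n)\in\Xi_{\mathfrak s}$, pass to $t_n\to t_\ast\in[0,1]$, and apply Gromov compactness to $C_n$ inside the prolongations $X_m$ of Section \ref{mapinto}, obtaining a limit with homological decomposition $A=m\mathfrak V+\sum m_iB_i+\sum r_iA_i$ as in the proof of Lemma \ref{conv}, possibly with levels descending into $V$ whose index is governed by Lemma \ref{index} and whose stratum dimension is governed by Lemma \ref{dim}. The goal is to show the limit is again a single embedded submanifold lying in some $\mathcal K_V(\mathfrak s(t_\ast))$. As in Eq.~\ref{dai_ge_da} the properness of $[\mathcal I_A]$ gives $\sum_i d_{A_i}\ge d_A$, while the adjunction/intersection estimate of Lemma \ref{conv} gives the reverse bound $d_A\ge d_{\mathfrak V}+\sum_i d_{A_i}$; since $V$ is stable, $d_{\mathfrak V}\ge 0$, and the two inequalities force equality, ruling out negative-square bubbles by Lemma \ref{neg}, ruling out multiple covers, and leaving a single non-multiply-covered embedded component that meets $V$ transversely and positively.

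The hard part will be this compactness step at the finitely many parameter values $t$ where $\mathfrak s(t)$ leaves the second-category sets of Lemmas \ref{smoothK} and \ref{finiteK}, since there the pointwise genericity behind Lemmas \ref{neg} and \ref{conv} is not directly available, and in particular I must exclude a level descending into $V$. The resolution is that stability makes the obstruction strict rather than a codimension-one wall: any descending configuration contributes a class supported in $V$, a positive multiple $m\mathfrak V$ with $m\ge 1$, so the estimate of Lemma \ref{conv} reads $d_A\ge m\,d_{\mathfrak V}+\sum_i d_{A_i}$. When $d_{\mathfrak V}>0$ this contradicts $\sum_i d_{A_i}\ge d_A$ outright, and when $d_{\mathfrak V}=0$ the only descent allowed has $\mathfrak V^2=0$ and hence produces a multiply toroidal piece, which is excluded by hypothesis; equivalently, the $+1$ from the parameter $t$ raises the descent stratum of Lemma \ref{dim} to dimension $-m+1\le 0$, and stability together with the non-toroidal assumption is exactly what upgrades the naive codimension-one bound on the leading $m=1$ stratum to a genuine emptiness. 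Consequently a generic section $\mathfrak s$ meets no interior degeneration, no spurious boundary appears, and $\Xi_{\mathfrak s}$ is a compact $1$-manifold.
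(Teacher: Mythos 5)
Your proposal is correct and follows essentially the same route as the paper: there, smoothness of $\Xi_{\mathfrak s}$ is likewise obtained by a Sard--Smale/parametrized transversality argument over the extended universal space $\mathcal Y$ (packaged as a pull-back diagram in which the projection $\pi$ is shown to be transverse to the evaluation map $T$, with the same dichotomy between $A\ne\mathfrak V$, handled by Section \ref{generic}, and $A=\mathfrak V$, where the $\xi=0$ issue must be treated), and the second claim is obtained by citing the compactness analysis of Section \ref{structure}. Your treatment of compactness at non-generic parameter values is in fact more explicit than the paper's one-line appeal to that section.
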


The proof of this Lemma will rely on the following result:  Consider the following pull-back diagram

\[
\begin{diagram}
\node{\mathcal X}\arrow{e}\arrow{s}\node{\mathcal Y}\arrow{s,l}{\pi}\\
\node{\Gamma(\{\omega_t\},[\mathcal I_A])}\arrow{e,b}{T}\node{\mathcal J_V\times [\mathcal I_A]}
\end{diagram}
\]
where $T$ is the evaluation map  and $\pi$ is the projection onto the last two components.

\begin{lemma}
The map $\pi$ is transverse to $T$.
\end{lemma}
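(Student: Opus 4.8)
The claim is that the projection $\pi:\mathcal{Y}\to\mathcal{J}_V\times[\mathcal{I}_A]$ is transverse to the evaluation map $T:\Gamma(\{\omega_t\},[\mathcal{I}_A])\to\mathcal{J}_V\times[\mathcal{I}_A]$. Transversality of $\pi$ to $T$ means: at every point in the fiber product where $\pi(y)=T(\mathfrak{s})$ lands at a common value $(J,\mathcal{I}_A)$, the images of $d\pi$ and $dT$ together span the tangent space $T_{(J,\mathcal{I}_A)}(\mathcal{J}_V\times[\mathcal{I}_A])$. Let me sketch the approach.

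Let me look at what each map's differential does. The section $\mathfrak{s}=\{(t,J_t,\mathcal{I}_A^t)\}$ is essentially a path, so $T$ is the inclusion of this path into the parameter space, and $dT$ spans the tangent to the path plus the variations allowed within $\Gamma$. Meanwhile $d\pi$ is the differential of the projection from $\mathcal{Y}$, whose surjectivity onto the $(J,\mathcal{I}_A)$-directions is exactly the content already established.

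**The key step.**

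The plan is to reduce this to the surjectivity results already proven. The tangent space to $\mathcal{Y}$ at a point $(i,u,t,J,\mathcal{I}_A)$ contains, as a subspace, the variations $(Y,\text{variations of }\mathcal{I}_A)$ in the almost complex structure and initial data directions — precisely the cokernel-killing directions used in Lemma~\ref{genericA} and the later genericity lemmas. There I showed that the linearization $\mathcal{F}_*$ is surjective onto the fiber $\mathcal{B}$ by varying $J$ (via the matrix $Y_0$) on an open set where $u$ meets $X\backslash V$; this surjectivity is exactly what guarantees that $d\pi$ surjects onto the relevant directions of the parameter space. The transversality of $\pi$ to $T$ then follows because the variations $Y$ used to achieve surjectivity of $\mathcal{F}_*$ can be chosen independently of the path-direction constraints imposed by $T$: on the open set $N_2\subset\Sigma$ where $u(x_0)\notin V$, one has complete freedom to perturb $J$, and this freedom is transverse to the one-dimensional path direction prescribed by $\mathfrak{s}$.

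**Carrying it out.**

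So first I would write out $d\pi$ and $dT$ explicitly, identifying the common target $T_{(J,\mathcal{I}_A)}(\mathcal{J}_V\times[\mathcal{I}_A])$ and noting that $dT$ already covers the $t$-direction together with the in-class variations of $\mathcal{I}_A$. Second, I would invoke the surjectivity of $\mathcal{F}_*$ onto the $J$-directions from Lemma~\ref{genericA} (respectively Lemma~\ref{genericV} on $V$), which shows that $d\pi$ hits all the $J$-variation directions $Y\in T\mathcal{J}_V$ not already spanned by $dT$. Third, I would check the initial-data directions: the evaluation conditions from $T$ prescribe that intersection points lie on the cobordisms $\prod X_\gamma$, and the remaining normal directions are filled by the evaluation-map surjectivity established when showing $\mathcal{K}_V$ is a smooth manifold (Lemma~\ref{smoothK}). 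Combining these, $\operatorname{im}(d\pi)+\operatorname{im}(dT)$ exhausts the target, giving transversality.

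**The main obstacle.**

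The hard part will be the endpoints $t=0,1$, where the section $\mathfrak{s}$ is constrained to pass through the preselected generic pairs $(J_0,\mathcal{I}_A^0)$ and $(J_1,\mathcal{I}_A^1)$, so $dT$ loses its freedom in the $J$- and $\mathcal{I}_A$-directions there. At these boundary points transversality must come entirely from $d\pi$, i.e.\ the genericity of the chosen endpoints must already supply the full surjectivity. Thus the delicate point is to verify that the endpoint pairs were chosen generically enough — which is guaranteed precisely because they were selected from the Baire sets of Lemmas~\ref{smoothK} and \ref{finiteK} — so that $\mathcal{F}_*$ is surjective there without needing to vary $J$ along the path. A secondary technical subtlety is the usual one of working in the $C^\infty$ category: strictly, the argument is run on $C^l$-completions and the genericity conclusion is transferred to smooth structures via the Taubes trick, exactly as invoked in Section~\ref{generic}.
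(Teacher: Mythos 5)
Your proposal has a genuine gap, and it is exactly the point the paper's proof is organized around. The central error is in your ``key step'': you assert that surjectivity of the universal linearization $\mathcal F_*$ (Lemma \ref{genericA}) ``is exactly what guarantees that $d\pi$ surjects onto the relevant directions of the parameter space.'' These are different statements. In the surjectivity of $\mathcal F_*$, the variations $Y$ of $J$ are part of the \emph{input} used to kill cokernel elements; this makes $\mathcal Y$ a Banach manifold, nothing more. Surjectivity of $d\pi$ at a point $(i,u,t,J,\mathcal I_A)\in\mathcal Y$ demands that for \emph{every} prescribed $(Y,\delta\mathcal I_A)$ there exist $(\alpha,\xi)$ with $\mathcal F_*(\alpha,\xi,Y)=0$ and the linearized incidence conditions satisfied; this is essentially the non-degeneracy condition of Definition \ref{nondeg} for the curve $u$, and it holds only over \emph{generic} parameter values, not at every point of $\mathcal Y$. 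Since the fiber product is taken over all values $\mathfrak s(t)$ of sections, and the interior values $(J_t,\mathcal I_A^t)$ of a section are arbitrary rather than generic, you may not quote Lemmas \ref{genericA} and \ref{smoothK} pointwise along the path, as your second and third steps do. (Your endpoint discussion is correct, but it is the easy part: at $t=0,1$ the pairs were chosen from the Baire sets, every curve in those fibers is non-degenerate, and $d\pi$ is surjective there.)

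What you miss entirely is the case $A=\mathfrak V$, the one the paper's proof singles out. At the point of $\mathcal Y$ given by the embedding of $V$ itself, any $Y\in T\mathcal J_V$ must satisfy $Y(TV)\subset TV$, so $Y\circ du\circ i$ has vanishing normal component (and vanishes identically for variations inside $\mathcal J_V^j$); no perturbation of $J$ within $\mathcal J_V$ can produce surjectivity there, and Lemma \ref{genericV} was proved only for $\xi\ne 0$. This is the meaning of the paper's remark that ``$\xi=0$ is not in the image of $d\pi$,'' and the failure occurs at interior times $t$, where $J_t$ cannot be assumed generic. The paper closes this gap by the observation that the missing directions lie in the image of $dT$: a section with fixed endpoints can be varied arbitrarily at interior times, so $dT$ surjects onto $T(\mathcal J_V\times[\mathcal I_A])$ there. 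Note that this observation also repairs your argument: $dT$ covers \emph{all} directions at interior times --- not merely the $t$-direction and the $\mathcal I_A$-variations, as you claim --- so once it is used, transversality at interior times is automatic for every class $A$, and the endpoints are covered by your genericity remark. As written, however, your proof neither establishes surjectivity of $d\pi$ at interior times nor confronts the class $\mathfrak V$, so it does not go through.
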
 

\begin{proof}
As in Section \ref{generic}, we will need to distinguish the cases $A\ne\mathfrak V$ and $A=\mathfrak V$.  In the former case, the results of Section \ref{generic} immediately show that the differential of $\pi$ is surjective.  In the latter case, we need to again be wary of $\xi=0$, which is not in the image of $d\pi$.  However, the differential of the evaluation map $T$ can attain this value, hence again transversality is attained.

\end{proof}

We now turn our attention to the proof of Lemma \ref{definv}:

\begin{proof}The previous Lemma ensures that the pull-back space $\mathcal X$ is a smooth manifold.  Consider the map $P:\mathcal X\rightarrow \Gamma(\{\omega_t\},[\mathcal I_A])$.  Applying the Sard-Smale Theorem to this map proves the smoothness of the space $\Xi_{\mathfrak s}$ as well as the claim on the dimension.  The compactness of this space follows from the arguments on compactness in Section \ref{structure}.

\end{proof}

This shows that the spaces of connected submanifolds $\mathcal K_V(J,\mathcal I)$ are invariant under deformation of the symplectic structure and do not depend upon the particular choice of $J$ or $\mathcal I$.  Moreover, it is clear that these spaces depend on the class $[\mathcal I]$ and the ordering of this class.  

Hence, $Ru^V(A,[\mathcal I_A])$ is an invariant of the deformation class of the symplectic structure and otherwise depends on $A$, the hypersurface $V$ and the ordered initial class $[\mathcal I_A]$.

{\bf Remark:} \begin{enumerate}
\item Thus far, the relative Ruan invariant is defined when $A$ is not multiply toroidal.  If we consider multiply toroidal $A$, then Taubes has shown that already in the absolute case it is not possible to define a meaningful invariant allowing only connected curves.  This first step towards an invariant for disconnected curves will be taken in Section \ref{taubesinv}.

\item Motivated by the work by Maulik and Pandharipande (\cite{MP2}), it is natural to ask, whether the relative Ruan invariant depends not on the deformation class of $(X,V,\omega)$, but actually only on the class $\mathfrak V$.  In particular, it would not depend on the precise choice of $V$.  This would have interesting ramifications, see for example the result for K3 surfaces in Thm. \ref{k3vanish}.  The result in \cite{MP2} relies on the induced mapping $H^*(V)\rightarrow H^*(X)$, which in our 4-manifold setting is only interesting on the level of $H^1$.  In particular, if we have no insertions in $H^1$ or if $X$ is simply connected, then this condition would be trivially fulfilled for any representative of $\mathfrak V$ and  hence this map would provide no way to distinguish between different representatives of the class $\mathfrak V$.  Furthermore, the rank of the skew-symmetric part of the restriction of the intersection pairing to the pull back of $H^1(X)$ to $H^1(V)$ was shown to depend only on the class $A$, see \cite{Li3}.  Thus, we are led to ask the following question: 

\begin{question}
On a symplectic 4-manifold, can we find two hypersurfaces $V_1$ and $V_2$ representing the class $\mathfrak V$,  such that the images of  $H^1(V_1)$ and $H^1(V_2)$ in $H^1(X)$ differ?
\end{question}  

\end{enumerate}

\subsection{\label{grtex}Some Calculations}
\subsubsection{Genus 0 Curves}

In this section we consider relative Ruan invariants in genus 0 relative to submanifolds $V$ with $d_\mathfrak V\ge 0$.  Unless otherwise stated, we assume that $X$ is not rational or ruled.  It was shown in \cite{LS} that all relative Gromov-Witten invariants of $(X,\omega)$ in genus 0 vanish if $X$ is minimal.  These invariants are concerned with connected curves representing the class $A$.  Moreover, the proof in \cite{LS} shows, that for curves of genus 0 not only do the GW-invariants vanish, but for generic $(J,\mathcal I_A)$ the spaces underlying the invariants are empty.  Hence all relative Ruan invariants are trivial as well for genus 0 classes if $X$ is minimal.  Moreover, the proof in \cite{LS} shows, that if $X$ is non-minimal, then generically the only possible genus 0 connected curves are embedded $-1$-spheres.  Hence, we consider only classes $A=E$ of an exceptional sphere. 
 
For each exceptional curve class $E$ we can state the following:  For each $J\in \mathcal J_V$ there is a unique symplectic submanifold representing the exceptional curve.  Hence it follows that 
\begin{equation}
\mathcal K_V(E,J,\mathcal I_{E})=\left\{\begin{array}{cc} (E,1,\mathcal I_{E})& \mathfrak V\cdot E>0\\
\emptyset &  \mbox{otherwise}
\end{array}\right.
\end{equation}
and
\begin{equation}
\mathcal I_{E}=\left\{\begin{array}{cc}\Upsilon^{E\cdot\mathfrak V}_V& \mathfrak V\cdot E>0\\
\emptyset & \mbox{otherwise}
\end{array}.\right.
\end{equation}
These results follow from the uniqueness of exceptional curves and Lemma \ref{values}.
  
If $\mathfrak V\cdot E\le 0$, then the relative Ruan invariant vanishes.  In the remaining case, the calculation of $r(E_i,1,\mathcal I_i)$ relies on a path of operators, as described in Section \ref{relinv}, which corresponds to a change in complex structure on the normal bundles.  This path can have only a finite number of points at which the kernel of the operator has dimension greater than 0.  This however would imply the existence of more than one exceptional curve for certain almost complex structures, which we can rule out topologically.  Hence $r(E_i,1,\mathcal I_i)=1$.  It follows that 
\begin{equation}
Ru^V(A,[\mathcal I_A])=1.
\end{equation}
Hence we have proven

\begin{theorem}\label{spheres}
Let $X$ be a non-rational, non-ruled symplectic 4-manifold and $V$ a symplectic hypersurface.  Let $A\cdot \mathfrak V>0$ and the genus of $A$ as determined by the adjunction formula be 0.  Denote the exceptional curves of $X$ by $E_i$.  Then 
\begin{equation}
Ru^V(A,[\mathcal I_A])=\left\{\begin{array}{cc}
1& \mbox{ if }A=E_i,\\
0&\mbox{ otherwise.}
\end{array}\right.
\end{equation}
\end{theorem}

It should be noted, that $\mathfrak V\cdot E<0$ can only occur if $V$ is non-stable, see Lemma \ref{positivity}.

\subsubsection{Algebraic K3-surfaces}

Let $X$ be a K3 surface, i.e. a surface with trivial canonical bundle and $b_1=0$.  In \cite{MP2} the authors consider relative invariants for a K3 surface relative to a quartic in an application of the sum formula for Gromov-Witten invariants.  It is striking, that the only invariants which are non-trivial, are those where the class of the divisor $V$ and the class of the relative curve $A$ are the same.  In the following, we provide some explanation for this behavior.

K3 surfaces have been extensively studied and much is known about their moduli.  We review briefly some facts and introduce notation, details can be found in \cite{BPV}.  

For a K3 surface, the Betti numbers take the values $b_1=0$ and $b_2=22$.  The group $H^2(X,\mathbb Z)$ is an even unimodular lattice with a quadratic form $q$ given by the intersection pairing.  This pairing has signature $(3,19)$ and hence is isomorphic to the pairing $(\cdot,\cdot )$ given by $L=3H\oplus 2(-E_8)$.    Fix a lattice isomorphism $\phi:(H^2(X,\mathbb Z),q)\rightarrow L$, a K3 surface with a fixed choice of $\phi$ is called a marked K3 surface.  The period of a marked K3 surface $X$ is given by  a choice of $[J]\in L\otimes \mathbb C\cong H^2(X,\mathbb C)$ such that $\phi_{\mathbb C}^{-1}([J])$ generates $H^{2,0}(X,\mathbb C)$, where $\phi_\mathbb C$ is the extension of $\phi$ to $L\otimes \mathbb C$.  This also determines a complex structure on $X$, as we have the Hodge decomposition $H^2(X,\mathbb C)=H^{2,0}(X)\oplus H^{1,1}(X)\oplus H^{0,2}(X)$.  In other words, after fixing $\phi$, the period is given by a point in $\mathbb P(L\otimes C)$.  In particular, we define the period domain $\Omega$ to be
\[
\Omega=\{[J]\in \mathbb P(L\otimes C)\;\vert\;([J],[J])=0,\;\;([J],\overline{ [J]})>0\}.
\]
The global Torelli theorem, originally proven by \cite{PS}, states that every point in $\Omega$ corresponds to a marked K3 surface.  We define a period map $\tau_1:M_1\rightarrow \Omega$ from an analytic, non-Hausdorff, smooth space $M_1$ parametrizing marked K3 surfaces.  This map can be refined slightly as follows:  Together with the period point $[J]$, we can choose a K\"ahler class $\kappa\in H^{1,1}(X,\mathbb R)$.  In particular, $\kappa$ is characterised by the existence of a positive definite, with respect to $q$, plane $E([J])\subset H^2(X,\mathbb C)$, such that $q(\kappa,E([J]))=0$ and $q(\kappa,\kappa)>0$.  Define the set 
\[
K\Omega=\{(\kappa,[J])\in (L\otimes \mathbb R)\times \Omega\;\vert\;(\kappa,E([J]))=0,\;(\kappa,\kappa)>0\}
\]
where we define $E([J])$ to be the span of $\{Re[J],\;Im[J]\}$.  Then the set 
\[(K\Omega)^0=\{(\kappa,[J])\in K\Omega\;\vert\;(\kappa,d)\ne 0\;\mbox{ for any } d \mbox{ with } (d,d)=-2,\;([J],d)=0\}
\]
is open in $K\Omega$ and the refined period map $\tau_2:M_2\rightarrow (K\Omega)^0$ from a smooth Hausdorff analytic space $M_2$ parametrizing marked pairs $(X,\kappa)$ is an isomorphism.  This map is defined by $\tau_2(X,\kappa)=(\phi_\mathbb C(\kappa),\tau_1(X))$.  In particular, this isomorphism provides the following result:

\begin{lemma} \label{period} $e\in{\mathcal P}_X$ is a K\"ahler class if and only if
there is a q-positive definite $2-$plane $U$ in $H^2(X,{\mathbb R})$
such that $e\perp U$, and $q(e,d)\ne 0$ for any integral class $d$
in $H^2(X,{\mathbb Z})$ with $q(d,d)=-2$ and $d\perp U$.
\end{lemma}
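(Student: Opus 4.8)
The plan is to deduce the statement directly from the global Torelli theorem together with the isomorphism $\tau_2\colon M_2\to (K\Omega)^0$ recorded above. That isomorphism already identifies the Kähler classes of the marked K3 surface $X$ with the points of $(K\Omega)^0$, so the lemma is essentially a coordinate-free restatement of the defining inequalities of $(K\Omega)^0$, with the distinguished period plane $E([J])$ replaced by an abstract $q$-positive definite plane $U$. I would therefore prove the two implications by translating between ``$e$ is Kähler'' and ``$(\phi_{\mathbb C}(e),[J])\in(K\Omega)^0$''.

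First I would treat the forward implication. Suppose $e$ is a Kähler class, write $[J]=\tau_1(X)$ for the period point, and set $U=E([J])=\mathrm{span}\{\mathrm{Re}[J],\mathrm{Im}[J]\}$. The defining condition $([J],\overline{[J]})>0$ for $\Omega$ shows that $U$ is $q$-positive definite, while $e\in H^{1,1}(X,\mathbb R)=U^{\perp}\cap H^2(X,\mathbb R)$ gives $e\perp U$. For the last condition, any integral $d$ with $q(d,d)=-2$ and $d\perp U$ is a $(1,1)$-class, so Riemann--Roch on the K3 surface (where $K=0$ and $\chi(\mathcal O)=2$ force $\chi(d)=2+\tfrac12 q(d,d)=1$) together with Serre duality yields that one of $\pm d$ is effective; since a Kähler class pairs strictly positively with every effective curve, $q(e,d)\neq 0$. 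This produces the plane required by the statement.

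Next I would treat the converse. Given a $q$-positive definite plane $U$ with $e\perp U$ and $q(e,d)\neq 0$ for all $(-2)$-classes $d\perp U$, the goal is to show $(\phi_{\mathbb C}(e),[J])\in(K\Omega)^0$ and then invoke surjectivity of $\tau_2$ onto $(K\Omega)^0$ and global Torelli to conclude that $e$ is realized as a genuine Kähler class of $X$. Here $e\in\mathcal P_X$ supplies $q(e,e)>0$, the orthogonality $e\perp U$ supplies the vanishing pairing against the period plane, and the $(-2)$-hypothesis supplies the open chamber condition cutting $(K\Omega)^0$ out of $K\Omega$.

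The main obstacle is precisely this backward direction: identifying the abstract plane $U$ with the distinguished period plane $E([J])$ of the fixed complex structure, and then selecting the correct connected component and Kähler chamber. The positive cone $\{q(e,e)>0\}$ has two components, and the hyperplanes $d^{\perp}$ attached to the $(-2)$-classes $d\perp U$ chop the relevant component into chambers; I must verify that the hypotheses place $e$ in the single chamber whose image under $\tau_2$ consists of Kähler classes -- equivalently that $q(e,d)>0$ for every \emph{effective} $(-2)$-class -- rather than merely guaranteeing that $e$ lies off the walls. This is where the orientation data carried by $U$ and the global Torelli theorem are indispensable, and it is the step that demands the most care.
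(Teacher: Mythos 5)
Your overall strategy is the same as the paper's: the paper gives this lemma no separate proof at all, but derives it ``in particular'' from the stated isomorphism $\tau_2\colon M_2\to(K\Omega)^0$. Your forward direction (taking $U=E([J])$, getting positive definiteness from $([J],[J])=0$ together with $([J],\overline{[J]})>0$, orthogonality from $e\in H^{1,1}$, and the wall condition from the fact that an integral $(-2)$-class orthogonal to $E([J])$ is of type $(1,1)$, hence $\pm d$ is effective by Riemann--Roch, hence pairs nontrivially with a K\"ahler class) is correct and complete, and is in fact more detail than the paper records.

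The gap is in the converse, which you explicitly leave unfinished -- and the obstacle you flag there is not the real issue; it stems from misreading both the quantifier in the lemma and the strength of the cited theorem. First, in the converse there is no ``fixed complex structure'' whose period plane $E([J])$ must be identified with $U$: the abstract plane $U$ itself \emph{determines} a period point, namely $[J_U]=\phi_{\mathbb C}(u_1+iu_2)$ for an orthogonal basis $u_1,u_2$ of $U$ with $q(u_1,u_1)=q(u_2,u_2)>0$, and ``K\"ahler class'' in the lemma means K\"ahler for \emph{some} complex structure on $X$ (this is exactly how Lemma \ref{pic} uses it: ``the complex structure determined by the plane $U$''). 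Second, the chamber verification you call indispensable -- checking $q(e,d)>0$ against \emph{effective} $(-2)$-classes -- is neither needed nor meaningful here: effectivity is undefined before a complex structure is chosen, and $(K\Omega)^0$ is cut out of $K\Omega$ by the open wall-avoidance condition $(\kappa,d)\ne 0$ alone, so it contains \emph{every} chamber over every period point. The content of the refined Torelli theorem as quoted is precisely that $\tau_2$ is surjective onto all of $(K\Omega)^0$; the Weyl-group/sign bookkeeping that moves an arbitrary chamber to the K\"ahler chamber at the cost of changing the marking is internal to the proof of that surjectivity and is not something the user must supply. So the converse finishes in three lines: the hypotheses on $U$ translate verbatim into $(\phi_{\mathbb C}(e),[J_U])\in(K\Omega)^0$ (noting $d\perp U\iff(\phi(d),[J_U])=0$), surjectivity of $\tau_2$ produces a marked pair whose K\"ahler class corresponds to $e$ under the markings, and one transports the complex structure back to $X$ along a diffeomorphism inducing the resulting lattice isometry. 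That last transport step (where the orientation of $U$ can be used to land in the coset of isometries realized by diffeomorphisms) is the only point genuinely suppressed by the paper, and it is standard in the Torelli package; it is not the chamber problem you describe.
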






The following Theorem is referred to as the Lefschetz (1,1) Theorem:

\begin{theorem}\label{lefschetz}(Thm IV.2.13, \cite{BPV})
\[
Pic(X)= H^{1,1}(X)\cap H^2(X,\mathbb Z)
\]
\end{theorem}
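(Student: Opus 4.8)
The plan is to derive the statement from the exponential sheaf sequence together with Hodge theory, exploiting the special cohomology of a K3 surface. First I would write down the exponential exact sequence of sheaves on $X$,
\[
0\to \mathbb Z\to \mathcal O_X \xrightarrow{\exp(2\pi i \cdot)} \mathcal O_X^*\to 0,
\]
and pass to the associated long exact sequence in sheaf cohomology:
\[
H^1(X,\mathcal O_X)\to H^1(X,\mathcal O_X^*)\xrightarrow{c_1} H^2(X,\mathbb Z)\xrightarrow{\iota} H^2(X,\mathcal O_X).
\]
Here $H^1(X,\mathcal O_X^*)=Pic(X)$ is the group of holomorphic line bundles, and the connecting homomorphism is the first Chern class. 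Thus $c_1$ maps $Pic(X)$ onto $\ker(\iota)$ with kernel equal to the image of $H^1(X,\mathcal O_X)$.

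The next step is to use that $X$ is a K3 surface. Since $b_1=0$, Hodge theory gives $h^{0,1}=b_1/2=0$, so $H^1(X,\mathcal O_X)\cong H^{0,1}(X)=0$. Therefore $c_1$ is injective and identifies $Pic(X)$ with $\ker(\iota)\subset H^2(X,\mathbb Z)$, and it remains only to identify this kernel with $H^{1,1}(X)\cap H^2(X,\mathbb Z)$.

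To do so I would use the Dolbeault isomorphism $H^2(X,\mathcal O_X)\cong H^{0,2}(X)$ together with the Hodge decomposition $H^2(X,\mathbb C)=H^{2,0}\oplus H^{1,1}\oplus H^{0,2}$, under which $\iota$, after extending scalars to $\mathbb C$, becomes the projection onto the $H^{0,2}$-summand. An integral class $\alpha$ is real, so its Hodge components satisfy $\alpha^{0,2}=\overline{\alpha^{2,0}}$; hence $\alpha^{0,2}=0$ if and only if $\alpha^{2,0}=0$, which happens exactly when $\alpha\in H^{1,1}(X)$. This shows $\ker(\iota)=H^{1,1}(X)\cap H^2(X,\mathbb Z)$, completing the identification.

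The main obstacle is not formal but analytic: one must invoke the Hodge decomposition for the compact K\"ahler surface $X$ and verify that the sheaf-theoretic map $\iota\colon H^2(X,\mathbb Z)\to H^2(X,\mathcal O_X)$ agrees, under the Dolbeault and de Rham isomorphisms, with the $(0,2)$-projection of the Hodge decomposition, as well as that the connecting homomorphism $c_1$ coincides with the topological first Chern class. Granting these standard facts, the argument reduces to a direct diagram chase, and indeed the result is exactly Thm.\ IV.2.13 of \cite{BPV}.
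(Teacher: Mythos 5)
Your proposal is correct, and it is essentially the proof that the paper's cited source (Thm.\ IV.2.13 of \cite{BPV}) gives: the paper itself offers no argument beyond the citation, and the standard proof there is exactly your exponential-sequence argument, using $h^{0,1}=0$ (from $b_1=0$) to make $c_1$ injective and Hodge theory to identify $\ker\bigl(H^2(X,\mathbb Z)\to H^2(X,\mathcal O_X)\bigr)$ with $H^{1,1}(X)\cap H^2(X,\mathbb Z)$. Nothing is missing; you correctly isolate the two standard compatibilities (Dolbeault/Hodge and the identification of the connecting map with $c_1$) that the argument rests on.
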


In particular, for any class $A\in Pic(X)$ with $A^2\ge -2$, either $A$ or $-A$ is effective.  Note that Pic(X) depends strongly on the complex structure, in fact, the rank of Pic(X), denoted $\rho(X,J)$,  can take all values in $[1,20]$.  Therefore, consider the following Lemma:

\begin{lemma}\label{pic}
Let $X$ be the $K3$-surface and $A\in H^2(X,\mathbb Z)$ with
$A^2\ge 0$.    Then there exist complex structures on $X$ such
that $A$ lies in the image of Pic(X) in $H^2(X,\mathbb C)$.
\end{lemma}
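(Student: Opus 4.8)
The plan is to translate the conclusion into a statement about periods and then invoke the surjectivity of the period map. By the Lefschetz $(1,1)$ Theorem (Theorem \ref{lefschetz}), since $A$ is already integral, it suffices to produce a complex structure on $X$ for which $A\in H^{1,1}(X)$. Recall that such a structure is encoded by its period $[J]\in\Omega$, with $H^{2,0}(X)$ spanned by $\phi_{\mathbb C}^{-1}([J])$ and $H^{1,1}(X)=\big(H^{2,0}(X)\oplus H^{0,2}(X)\big)^{\perp}$. Because $A$ is a real class, a Hodge-type computation (the bilinear intersection form kills $H^{2,0}\wedge H^{2,0}$ and $H^{2,0}\wedge H^{1,1}$, and $A^{0,2}=\overline{A^{2,0}}$) shows that $A\in H^{1,1}(X)$ holds precisely when $q(A,[J])=0$, i.e. exactly when $A$ is orthogonal to the real, $q$-positive definite $2$-plane $E([J])=\mathrm{span}\{\mathrm{Re}[J],\mathrm{Im}[J]\}$. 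Thus the lemma reduces to the purely lattice-theoretic assertion that there is a $q$-positive definite $2$-plane $U\subset H^2(X,\mathbb R)$ with $A\perp U$.

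I would first dispose of the trivial case $A=0$, where every complex structure works, and then assume $A\neq 0$ and exploit the signature $(3,19)$ of the intersection form. If $A^2>0$, then $A$ spans a positive line, so the restriction of $q$ to $A^{\perp}$ has signature $(2,19)$ and $A^{\perp}$ already contains the desired positive definite $2$-plane $U$. If $A^2=0$, the form on $A^{\perp}$ is degenerate with radical $\langle A\rangle$; the nondegenerate quotient $A^{\perp}/\langle A\rangle$ has signature $(2,18)$ and therefore contains a positive definite $2$-plane, which I would lift to a $2$-plane $U\subset A^{\perp}$. Since $q(u,A)=0$ for $u\in A^{\perp}$ and the radical is $\langle A\rangle$, the quadratic value $q(u,u)$ is unchanged under the projection, so the lifted plane $U$ is again $q$-positive definite.

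Having produced $U$, I would pick a $q$-orthogonal basis $e_1,e_2$ of $U$ with $q(e_1,e_1)=q(e_2,e_2)>0$ and set $[J]=e_1+i\,e_2$; then $([J],[J])=0$ and $([J],\overline{[J]})>0$, so $[J]\in\Omega$, and by construction $E([J])=U\perp A$. By the global Torelli theorem (\cite{PS}) the period map is surjective onto $\Omega$, so this period is realized by a complex structure on $X$; for that structure $q(A,[J])=0$ forces $A\in H^{1,1}(X)$, whence $A\in H^{1,1}(X)\cap H^2(X,\mathbb Z)=\mathrm{Pic}(X)$ by Theorem \ref{lefschetz}. The main obstacle is the isotropic case $A^2=0$, where one must check carefully that the positive definite $2$-plane survives the passage through the radical $\langle A\rangle$, together with the bookkeeping needed to realize the abstractly constructed period on the \emph{fixed} manifold $X$ rather than on some auxiliary marked K3 surface; this is precisely where the diffeomorphism invariance underlying the marking of $H^2(X,\mathbb Z)$ is used.
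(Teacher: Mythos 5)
Your proposal is correct in substance, but it takes a genuinely different route from the paper's. The paper argues by cases and outsources the realization step to two different pieces of machinery: for $A^2>0$ it invokes Lemma \ref{period} (the K\"ahler-class characterization coming from the refined period map $\tau_2:M_2\rightarrow (K\Omega)^0$), so that $A$ is actually a K\"ahler class for a suitable complex structure; for $A^2=0$ it sets up the hyperk\"ahler/twistor-family picture and cites Prop.~3.1 of \cite{BL} for the nonemptiness of the Grassmannian $Gr^+_{A,e}$ of positive $2$-planes orthogonal to $A$ and a K\"ahler class $e$. You instead argue uniformly: reduce, via Theorem \ref{lefschetz} and the Hodge-theoretic equivalence $A\in H^{1,1}\Leftrightarrow q(A,[J])=0\Leftrightarrow A\perp E([J])$, to the purely lattice-theoretic existence of a $q$-positive $2$-plane $U\perp A$; prove that existence by hand (signature $(2,19)$ of $A^{\perp}$ when $A^2>0$, and the radical/quotient argument when $A^2=0$, which is correct as written); then prescribe the period $[J]=e_1+ie_2$ and quote surjectivity of $\tau_1$, which is what the paper calls the global Torelli theorem. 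Your route is more elementary and self-contained on the linear-algebra side---you prove the nonemptiness that the paper cites from \cite{BL}, and you avoid the $(-2)$-class genericity condition built into Lemma \ref{period}, since membership in $\mathrm{Pic}(X)$ does not require $A$ to be K\"ahler. What the paper's approach buys in return is that for $A^2>0$ it gives the stronger conclusion that $A$ is a K\"ahler class (which the surrounding algebraicity discussion actually uses), and for $A^2=0$ the twistor family consists of complex structures on the fixed manifold $X$ itself, so the issue of transporting an abstractly constructed marked K3 surface back to $X$ never arises. That transport is the one soft spot in your argument: surjectivity of $\tau_1$ produces a marked pair $(Y,\psi)$ with period $[J]$, and obtaining a complex structure on the given $X$ whose period with respect to the given marking is $[J]$ additionally requires that a suitable lattice isometry be induced by a diffeomorphism (for instance via the Matumoto--Borcea realization of $O^+(L)$ by $\mathrm{Diff}(X)$, combined with the observation that $-\mathrm{id}$ fixes every period line). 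You flag this bookkeeping without executing it; since the paper suppresses exactly the same identification in its own appeal to Lemma \ref{period}, this is a difference of emphasis rather than a genuine gap.
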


\begin{proof}By Theorem \ref{lefschetz}, we need to show that
$A\in H^{1,1}(X)$ for some decomposition given by a complex
structure $J$.  

If $A^2>0$, then $A$ is  a K\"ahler class and the complex structure determined by the plane $U$ from Lemma \ref{period} ensures $A\in Pic(X)$. 

Consider now the case $A^2=0$.  We shall make use of a nice feature for K3 surfaces, namely the
existence of a hyperk\"ahler metric $g$. Let $X$ be a marked K3 surface  with complex structure $J$ determined by the marking and fix a K\"ahler class $\omega$.  Then there exists a unique hyperk\"ahler metric $g$ of class $\omega$.  Moreover, this metric induces a
family of complex structures parameterized by the unit sphere of the
imaginary quaternions and a corresponding family of K\"ahler
forms. Denote this sphere by $S^2(g)$, and for each $u\in S^2(g)$,
denote the corresponding K\"ahler form by $\omega_u$. The span $F$
of the $\omega_u$ is a $3-$dimensional positive-definite subspace of
$H^2(X,\mathbb R)$ (with a basis given by $\{\omega_I, \omega_J,
\omega_K\}$). Let $F^{\perp}$ be the orthogonal complement of $F$,
then $H^2(X,{\mathbb R})=F\oplus F^{\perp}$ as $b^+=3$. In fact, $H^{1,1}_{u}={\mathbb R}[\omega_u]\oplus
F^{\perp}$.  Moreover, $J\in S^2(g)$.

If $A^2=0$, then $A\cdot e>0$ for some K\"ahler class $e$.  Define the Grassmannian $Gr^+_{A,e}$ of positive definite
2-planes which are orthogonal to $A$ and $e$.
This Grassmannian is nonempty by Prop 3.1, \cite{BL}.  Any element therein defines a complex structure such that $A\in H^{1,1}(X,\mathbb R)$.
\end{proof}

In the following, assume that $V$ is a submanifold in a K3 surface $X$ such that there exists a complex structure $J\in\mathcal J_V$ making $(X,J)$ a complex surface and $V$ a divisor.  In particular, $\mathfrak V$ lies in the N\'eron-Severi lattice of $(X,J)$.   We will call such a curve $V$ an algebraic curve.  Lemma \ref{pic} shows that such submanifolds exist for any class with non negative square.  In particular this implies that the Picard number $\rho(X,J)\ge 1$; K3-surfaces with this property are called algebraic K3-surfaces.  

Assume $V$ is an algebraic curve.  Then there exists a point $J\in\mathcal J_V$ which corresponds to a marking of $X$.  More precisely, let $\mathcal J_V^c\subset\mathcal J_V$ denote the subset of integrable almost complex structures.  Define the set
\[
\Omega_{\mathfrak V}=\{[J]\in\Omega\;\vert\;[J]\cdot\mathfrak V=0\},
\] 
this describes those markings of $X$ which make a submanifold in the class $\mathfrak V$ algebraic.  Then our assumption implies $\mathcal J_V^c\cap \Omega_\mathfrak V\ne \emptyset$.   Moreover, the following theorem shows that there exist markings such that the $\mathbb Z$-module $\langle\mathfrak V\rangle\subset L$ is the N\'eron-Severi lattice of $X$ and that such points are dense in $\Omega_{\mathfrak V}$:

\begin{theorem}(Cor II.12.5.3, \cite{S}; see also \cite{Mo1}, \cite{M})  Given  a sublattice $H$ of $L$ of rank $r$ such that the bilinear form restricted to $H$ has signature $(1,r-1)$, $r\le 20$, there exists an irreducible variety of dimension $(20-r)$ parametrizing a family of K3-surfaces $\{X_t\}$ with markings such that $H$ is a subset of the N\'eron-Severi group of any $X_t$.  Moreover, for generic $t$, $H$ is the N\'eron-Severi group of $X_t$.

\end{theorem}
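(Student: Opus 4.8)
The plan is to realize the asserted family as a sub--period--domain and to invoke the surjectivity of the period map, so that the entire argument reduces to lattice bookkeeping. First I would use the two structural facts already recorded: for a K3 surface $b_1=0$ forces $NS(X)=Pic(X)$, and by the Lefschetz $(1,1)$ theorem (Theorem \ref{lefschetz}) one has $Pic(X)=H^{1,1}(X)\cap H^2(X,\mathbb Z)$. Transporting this through a marking $\phi$ and recalling that the period $[J]\in\Omega$ is characterized by $H^{2,0}(X)=\phi_{\mathbb C}^{-1}(\mathbb C\,[J])$, an integral class $d\in L$ is of type $(1,1)$, hence lies in $NS(X)$, precisely when $([J],d)=0$. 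Thus the Néron--Severi lattice is identified with the integral classes orthogonal to the period point,
\begin{equation}
NS(X)\cong [J]^{\perp}\cap L.
\end{equation}

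Next I would exploit this identification. Requiring $H\subset NS(X_t)$ is exactly the condition $[J]\in H^{\perp}\otimes\mathbb C$, where $H^{\perp}$ is the orthogonal complement of $H$ in $L$. Since $L$ has signature $(3,19)$ and the form restricted to $H$ is nondegenerate of signature $(1,r-1)$, the complement $H^{\perp}$ has signature $(2,20-r)$. I would then form the restricted period domain
\begin{equation}
\Omega_H=\{[J]\in\mathbb P(H^{\perp}\otimes\mathbb C)\;\vert\;([J],[J])=0,\;([J],\overline{[J]})>0\}.
\end{equation}
Because $H^{\perp}$ has exactly two positive directions, $\Omega_H$ is a type IV domain whose defining quadric in $\mathbb P(H^{\perp}\otimes\mathbb C)$ is an irreducible hypersurface of complex dimension $20-r$; this yields both the irreducibility and the dimension in the statement. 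By the surjectivity half of the global Torelli theorem (\cite{PS}), every $[J]\in\Omega_H$ is the period of a marked K3 surface $X_{[J]}$, and by construction $H\subset[J]^{\perp}\cap L=NS(X_{[J]})$. This produces the asserted $(20-r)$--dimensional family $\{X_t\}$.

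Finally I would establish the generic equality $NS(X_t)=H$. A period $[J]\in\Omega_H$ acquires extra $(1,1)$--classes precisely when it is orthogonal to some integral $d\in L$ whose component along $H^{\perp}$ is nonzero, i.e. some $d$ not lying in the saturation of $H$; for each such $d$ the condition $([J],d)=0$ cuts out a proper closed analytic hyperplane section of $\Omega_H$. As $L$ is countable, the jumping locus is a countable union of proper analytic subsets of the irreducible domain $\Omega_H$, hence meager, and on its complement $[J]^{\perp}\cap L$ equals the saturation of $H$, which is $H$ itself under the standing hypothesis that $H$ is primitively embedded. This Baire--category step is entirely parallel to the genericity arguments of Sections \ref{generic} and \ref{structure}.

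The hard part is the input I am treating as a black box, namely the surjectivity (and injectivity) of the period map: this is the deep content of the global Torelli theorem cited above, and everything else is the lattice calculus organized around it. A secondary point requiring care is the irreducibility claim, since $\Omega_H$ has two connected real components exchanged by $[J]\mapsto\overline{[J]}$; one must observe that these are Zariski dense in the single irreducible quadric and correspond to isomorphic marked surfaces, so that the parametrizing variety is genuinely irreducible.
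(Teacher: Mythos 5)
The paper does not actually prove this statement: it is quoted as Cor.~II.12.5.3 of \cite{S} (see also \cite{Mo1}), so there is no internal proof to compare yours against, and your proposal should be judged as a reconstruction of the standard argument behind those references. As such it is essentially correct. The identification $NS(X_t)\cong [J]^{\perp}\cap L$ via Theorem \ref{lefschetz}, the signature count giving $H^{\perp}$ signature $(2,20-r)$, the restricted period domain $\Omega_H$ of dimension $20-r$ inside the quadric, and the Baire-category argument exhibiting the Picard-jumping locus as a countable union of proper hyperplane sections are all sound; moreover, your observation that the final clause ``$NS(X_t)=H$ for generic $t$'' tacitly requires $H$ to be primitively embedded (otherwise the generic N\'eron--Severi group is the saturation of $H$) is a genuine precision that the quoted theorem glosses over. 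Two caveats are worth recording. First, the surjectivity of the period map, which you rightly isolate as the deep input, is strictly speaking not part of the Torelli theorem of \cite{PS} (an injectivity statement) but a separate, later theorem (Todorov); the paper's own exposition attributes surjectivity to \cite{PS} in the same way, so you are at least consistent with its conventions. Second, pointwise surjectivity hands you one marked K3 surface per period point, not a \emph{family} $\{X_t\}$ over an irreducible variety: to produce the family one must either work inside the fine, non-Hausdorff moduli space $M_1$ of marked K3 surfaces that the paper introduces, taking a suitable component of $\tau_1^{-1}(\Omega_H)$, or invoke the moduli theory of lattice-polarized K3 surfaces, and this gluing step is precisely where the cited references do work that your sketch omits. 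Neither caveat breaks the approach; they mark what a complete write-up would have to add.
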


This allows us to determine the relative Ruan invariant relative to an algebraic hypersurface $V$:

\begin{theorem}\label{k3vanish}Let $X$ be the K3 surface and assume there exists a marking of $X$ such that the class $\mathfrak V$ can be represented by an algebraic curve of genus 1 or higher.  Then for generic algebraic representatives $V$ of $\mathfrak V$, the invariant $Ru^V(A,[{\mathcal I_A}])$ vanishes for all $A\ne n\mathfrak V$.  If $\mathfrak V$ is not toroidal, then the invariant vanishes if $n>1$.

\end{theorem}

\begin{proof}

We have seen, that there exists a dense $U\subset \Omega_\mathfrak V$ such that for any $[J]\in U$, we have $\rho(X,J)=1$.  This means that the only holomorphic curves are in class $\mathfrak V$.  Hence for any embedded algebraic curve $V$ representing $\mathfrak V$, we can be sure there are no relative embedded $J$-holomorphic submanifolds in any class other than possibly $A=n\mathfrak V$.

Now apply the vanishing principle in \cite{LP} to show that all the relative invariants $Ru^V(A,[{\mathcal I_A}])$ vanish.

\end{proof}

\begin{lemma}
The same holds true for any symplectic hypersurface $V$ in class $\mathfrak V$ such that the deformation class of $(X,V,\omega)$ contains an algebraic representative of $\mathfrak V$.
\end{lemma}

{\bf Remark:}  The restriction on the genus in Theorem \ref{k3vanish} ensures that $d_\mathfrak V=\mathfrak V^2\ge 0$ is fulfilled, hence $V$ is stable.  The missing genus 0 case is the only non-stable case.

The definition of the relative Ruan invariant is not especially helpful in this setting towards an actual computation of the invariant.  Such calculations will be made considerably easier through the use of a rubber-type calculus,  which will be developed in subsequent papers.  The results obtained by Maulik and Pandharipande in \cite{MP2} make use of localization techniques.

\subsection{Refined relative Ruan Invariants\label{refined}}

The purpose of this Section is to describe how two curves $C$ and $C'$ in $\mathcal K_V(A,J,\mathcal I_A)$ can be distinguished even though both curves lie in the  same class $A$ and meet the same set of initial data $\mathcal I_A$.   This will be used in the next Section to define a  refinement of the invariant $Ru^V(A,[{\mathcal I_A}])$.  We will describe here the construction used in \cite{IP4}.

\subsubsection{\label{rimtori}Rim Tori}
The difference $C\#\overline{C'}$ of the curves $C$ and $C'$ lies not in $X$ but in the open manifold $X/ V$.  More precisely, the class of the difference lies in the kernel $\mathfrak R$ of the map $H_2(X\backslash V)\rightarrow H_2(X)$.  The key is to find an optimal space with which to describe this difference while keeping track of the data $A$ and $\mathcal I_A$.  To that end, let us fix notation: For a given class of initial data $[\mathcal I_A]$, let $V_{[\mathcal I_A]}$ denote the collection of all sets of pairs $((x_1,s_1),...,(x_l,s_l))$ of intersection points in $V$ and contact orders to be found in initial data in class $[\mathcal I_A]$.  Define 
\[
V_A=\bigsqcup_{[\mathcal I_A]} V_{[\mathcal I_A]}
\]
 with the topology of the disjoint union.  Note that this space has an induced ordering on each point $V_{[\mathcal I_A]}$ coming from the ordering on the class ${[\mathcal I_A]}$.  Let $D(\epsilon)$ be an $\epsilon$-disk bundle in the normal bundle to $V$.  Then $X/\overline{D(\epsilon)}$ is diffeomorphic to $X/V$.  Define the space 
\[
\hat X=[X/\overline{D(\epsilon)}]\cup S
\]
where $S=\partial D(\epsilon)$.  The manifold $\hat X$ is compact and, endowing $S$ with the topology given by viewing it as a disjoint union of its fiber circles, we can consider the long exact sequence of the pair $(\hat X,S)$:
\[
0\rightarrow H_2(\hat X)\rightarrow H_2(\hat X,S)\rightarrow H_1(S)\rightarrow.
\]
In the given topology for $S$, the set $H_1(S)$ can be viewed as the space of divisors on $V$, meaning the finite collection of points labeled with multiplicities and sign.  This however is precisely the data in $\mathcal I_A$ relating to the intersection of the curves $C$ and $C'$ with the hypersurface $V$ (the sign is always $+$).  Note however, that $H_1(S)$ does not come with an ordering, it makes no distinction between data for curves meeting $V$ in the same points with the same multiplicity but with a differing ordering on the contact data.

Combining this sequence with the map $\pi:H_2(\hat X,S)\rightarrow H_2(X)$ induced by the inclusion, which has as its kernel the set $\mathfrak R$, leads to the exact sequence
\begin{equation}\label{rimseq}
0\rightarrow \mathfrak R\rightarrow H_2(\hat X,S)\rightarrow H_1(S)\times H_2(X).
\end{equation}
There is a map from $V_A$ to the set of divisors $H_1(S)$ which maps onto the set of effective divisors.  This allows for the definition of the space $\mathcal H_V^X$ by the following pullback diagram:

\begin{equation}\label{rimspace}
\begin{diagram}
\node{\mathcal H_V^X}\arrow{e}\arrow{s}\node{V_A}\arrow{s}\\
\node{H_2(\hat X,S)}\arrow{e}\node{H_1(S)}
\end{diagram}
\end{equation}
Combining \ref{rimseq} and \ref{rimspace} we obtain the fibration
\begin{equation}\label{rimlift}
\begin{diagram}
\node{\mathfrak R}\arrow{e}\node{\mathcal H_V^X}\arrow{s,r}{\mathfrak r}\\
\node[2]{H_2(X)\times V_A}
\end{diagram}
\end{equation}
which allows us to lift a class $A$ and its initial intersection data to a point in $\mathcal H_V^X$ which encodes the information on the intersection data as well as the class of the curve $C$ in the kernel $\mathfrak R$.  The procedure for a given curve $C$ is as follows:  Restrict the curve $C$ to $X\backslash V$, lift to the space $\hat X$ and use the construction in \cite{LR} (see also the brief discussion in Section \ref{contactorder}) to close the restricted curve $C$ to a curve $\hat C\subset \hat X$.  The class $[\hat C]\in H_2(\hat X,S)$ together with the intersection data from $C$ defines a point in $\mathcal H_V^X$.

In order for this construction to be useful, we need a characterisation of the kernel $\mathfrak R$.  This has been given in \cite{IP4}:  Let $\pi:S\rightarrow V$ be the projection map from the boundary of the $\epsilon$-disk bundle to the hypersurface $V$.  For every simple closed loop $\gamma$ in $V$, $\pi^{-1}(\gamma)$ is a torus in $S$.  Such tori are called rim tori and they generate $\mathfrak R$:

\begin{lemma}
(Lemma 5.2, \cite{IP4}) Each element in $\mathfrak R$ can be represented by a rim torus.
\end{lemma}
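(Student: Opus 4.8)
The plan is to realize $\mathfrak R$ as the image of a connecting homomorphism and then use the Thom isomorphism to recognize that image as exactly the span of the rim tori. First I would write down the long exact sequence of the pair $(X,X\backslash V)$,
\[
\cdots \longrightarrow H_3(X,X\backslash V)\xrightarrow{\ \partial\ } H_2(X\backslash V)\xrightarrow{\ i_*\ } H_2(X)\longrightarrow\cdots,
\]
whose exactness gives at once $\mathfrak R=\ker i_*=\operatorname{im}\partial$. Thus it suffices to describe $\operatorname{im}\partial$ geometrically, and in particular to compute the relative group $H_3(X,X\backslash V)$ and the map $\partial$ out of it.

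Next I would compute that relative group. Since $V$ is a closed symplectic surface with tubular neighborhood $D(\epsilon)$ and boundary $S=\partial D(\epsilon)$, excision identifies $H_*(X,X\backslash V)$ with $H_*(D(\epsilon),D(\epsilon)\backslash V)\cong H_*(D(\epsilon),S)$, using that $D(\epsilon)\backslash V$ deformation retracts onto $S$. The Thom isomorphism for the rank-$2$ normal bundle $N_V$ then yields $H_k(D(\epsilon),S)\cong H_{k-2}(V)$, so in particular $H_3(X,X\backslash V)\cong H_1(V)$. The key step is to check that under this chain of identifications the connecting map $\partial$ is precisely the rim-torus construction: a class $\gamma\in H_1(V)$ represented by an embedded loop corresponds to the relative $3$-cycle $\pi_D^{-1}(\gamma)\subset D(\epsilon)$, the part of the disk bundle lying over $\gamma$ (a solid torus, where $\pi_D\colon D(\epsilon)\to V$ is the bundle projection), whose boundary is the circle bundle $\pi^{-1}(\gamma)\subset S$. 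Hence $\partial$ carries this generator to $[\pi^{-1}(\gamma)]$, the rim torus over $\gamma$.

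Combining the two steps, $\mathfrak R=\operatorname{im}\partial$ is generated by the classes $[\pi^{-1}(\gamma)]$ as $\gamma$ ranges over $H_1(V)$. Since $H_1(V)$ is generated by embedded simple closed loops, every element of $\mathfrak R$ is a $\mathbb Z$-combination of rim tori over such loops, which is itself the rim torus $\pi^{-1}(\gamma)$ over a (possibly disconnected or non-primitive) $1$-cycle $\gamma$. This establishes the claim.

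The main obstacle I anticipate is the verification that $\partial$ really is the rim-torus map, i.e. pinning down the relative fundamental cycles of $(D(\epsilon),S)$ at the chain level and checking that the Thom isomorphism intertwines the boundary operator with $\gamma\mapsto\pi^{-1}(\gamma)$; the remainder is a formal diagram chase. An alternative bookkeeping that sidesteps relative cycles is to run Mayer--Vietoris for $X=(X\backslash\operatorname{int}D(\epsilon))\cup D(\epsilon)$ with overlap $S$: any $\alpha\in\mathfrak R$ then lifts to some $s\in H_2(S)$ with $\pi_*s=0$, after which the homological Gysin sequence of the circle bundle $S\to V$,
\[
0\longrightarrow H_1(V)\xrightarrow{\ p\ } H_2(S)\xrightarrow{\ \pi_*\ } H_2(V),
\]
identifies $\ker\pi_*=\operatorname{im}p$ with the rim tori via $p(\gamma)=[\pi^{-1}(\gamma)]$. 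Either route reduces the lemma to the same geometric fact, namely that the preimage of a loop in $V$ generates the relevant connecting (equivalently, transfer) map.
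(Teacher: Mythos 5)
Your proposal is correct, but your primary route is not the one the paper takes --- in fact, your ``alternative bookkeeping'' at the end is exactly the paper's proof. The paper argues via the Mayer--Vietoris sequence of $(X,X\backslash V,V)$ (with overlap the circle bundle $S$) together with the homology Gysin sequence of $\pi\colon S\to V$: a class $\alpha\in\mathfrak R$ lifts to some $s\in H_2(S)$ with $\pi_*s=0$, and since $H_3(V)=0$ the Gysin sequence gives $\ker\pi_*=\mathrm{im}\,\Delta$ with $\Delta(\gamma)=[\pi^{-1}(\gamma)]$, whence $\mathfrak R=\mathrm{image}\,[\iota_*\circ\Delta\colon H_1(V)\to H_2(X\backslash V)]$. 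Your main argument instead runs the long exact sequence of the pair $(X,X\backslash V)$, identifies $H_3(X,X\backslash V)\cong H_1(V)$ by excision and the Thom isomorphism, and recognizes the connecting homomorphism $\partial$ as the rim-torus map, the solid torus $\pi_D^{-1}(\gamma)$ being a relative $3$-cycle with boundary $\pi^{-1}(\gamma)$. What your route buys is a more direct and geometrically transparent derivation: $\mathfrak R$ appears as the image of a single connecting map out of $H_1(V)$, with the bounding solid torus in plain view. What the paper's route buys is that it stays entirely in absolute homology and exhibits the factorization of $\mathfrak R$ through $H_2(S)$, i.e.\ through classes supported on the boundary circle bundle, which is the form used in the paper's ensuing remark that the formula ``eliminates those rim tori which are homologous to zero in $X\backslash V$.'' The two arguments are essentially equivalent, since the Gysin sequence is itself obtained by applying the Thom isomorphism to the long exact sequence of the pair $(D(\epsilon),S)$; in particular, the technical step you flag --- that the Thom isomorphism intertwines $\partial$ with $\gamma\mapsto\pi^{-1}(\gamma)$ --- is precisely the standard identification of the Gysin connecting map $\Delta$, so neither route escapes it. One shared point of care: a general class in $H_1(V)$ is represented by an embedded multicurve, so ``a rim torus'' must be read as allowing a disjoint union of rim tori, as you note.
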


The proof of this Lemma utilises the Gysin sequence for the oriented circle bundle $\pi:S\rightarrow V$ and the Meyer-Vietoris sequence of $(X,X\backslash V,V)$:
\[
\rightarrow H_3(V)\rightarrow H_1(V)\stackrel{\Delta}{\rightarrow}H_2(S)\rightarrow H_2(V)\rightarrow
\]
\[
\rightarrow H_2(S)\stackrel{(\iota_*,\pi_*)}{\rightarrow} H_2(X\backslash V)\oplus H_2(V)\rightarrow H_2(X)\rightarrow
\]
This leads to
\[
\mathfrak R=\mbox{image }[i_*\circ \Delta:H_1(V)\rightarrow H_2(X\backslash V)]
\]
which eliminates those rim tori which are homologous to zero in $X\backslash V$.   We will call the set $\mathfrak R$ the set of rim tori in the following.

\subsubsection{Refined relative Ruan Invariants}

Rim tori allow us to differentiate curves lying in the same set $\mathcal K_V(A,J,\mathcal I_A)$ and thereby will allow us to refine the invariant $Ru^V(A,[\mathcal I_A])$.  We first describe how this refinement works and then concern ourselves with the properties of this definition.  Note that this refinement is not interesting for all classes $A\in H_2(X)$: If $A\cdot \mathfrak V\le 0$, then our results show that either $\mathcal K_V(A,J,\mathcal I_A)=\emptyset$ or $A\cdot \mathfrak V=0$ .

To each point $h\in \mathcal K_V(A,J,\mathcal I_A)$ can be associated a class $\hat A\in \mathcal H_X^V$.


It is therefore possible to decompose the spaces  $\mathcal K_V(A, J, \mathcal I_A)$ such that
\[
\mathcal K_V(A,J,\mathcal I_A)=\bigsqcup_{\hat A}\mathcal K_V(A, \hat A, J, \mathcal I_A)
\]
 according to possible lifts of the point $(A,\mathcal I_A)$ under the map \ref{rimlift}.  This also serves as a definition of the spaces $\mathcal K_V(A,\hat A,J,\mathcal I_A)$.  It is immediately clear that these spaces are finite and smooth for generic pairs $(J,\mathcal I_A)$.   Moreover, the following Proposition, analogous to Prop. \ref{mainprop}, holds:

\begin{prop}Fix a class $A\in H_2(X)$ with $A\cdot \mathfrak V>0$ and a proper class $[\mathcal I_A]$.  Then there is a Baire subset of $\mathcal J_V\times [\mathcal I_A]$ such that

\begin{enumerate}
\item The set ${\mathcal K}_V(A,\hat A,J,{\mathcal I_A})$ is a finite set.
\item Every point $h\in {\mathcal K}_V(A,\hat A,J,{\mathcal I_A})$ has the property, that each $C$ is non-degenerate.
\item If $(J^1,\mathcal I_A^1)$ is close to $(J,\mathcal I_A)$, then the sets ${\mathcal K}_V(A,\hat A,J,{\mathcal I_A})$ and ${\mathcal K}_V(A,\hat A,J^1,{\mathcal I_A}^1)$ have the same number of elements.
\end{enumerate}
\end{prop}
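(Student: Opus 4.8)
The plan is to deduce all three assertions from the corresponding statements for the undecomposed space $\mathcal{K}_V(A,J,\mathcal I_A)$ established in Proposition \ref{mainprop} and Lemma \ref{finiteK}, exploiting that the rim-tori lift $\hat A$ attached to a curve is a \emph{discrete} homological datum and hence cannot vary under small deformations. First I would fix $(J,\mathcal I_A)$ in the Baire set supplied by Lemma \ref{finiteK}. Since $A\cdot\mathfrak V>0$, every $C\in\mathcal K_V(A,J,\mathcal I_A)$ meets $V$ transversely and locally positively with $C\not\subset V$, so the closing-up procedure of Section \ref{rimtori} applies to each $C$ and assigns it a well-defined lift $\hat A(C)\in\mathcal H_X^V$ through the fibration \ref{rimlift}. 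The decomposition $\mathcal K_V(A,J,\mathcal I_A)=\bigsqcup_{\hat A}\mathcal K_V(A,\hat A,J,\mathcal I_A)$ is then exactly the partition of the fibers of the assignment $h\mapsto\hat A(h)$.

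Assertions (1) and (2) are inherited directly. Each $\mathcal K_V(A,\hat A,J,\mathcal I_A)$ is a subset of the finite set $\mathcal K_V(A,J,\mathcal I_A)$ (finite by Lemma \ref{finiteK}(1)), hence finite, giving (1). For (2), any $C\in\mathcal K_V(A,\hat A,J,\mathcal I_A)$ is a member of $\mathcal K_V(A,J,\mathcal I_A)$ and is therefore non-degenerate by Proposition \ref{mainprop}(4) in the sense of Definition \ref{nondeg}; where a representative happens to be a torus with trivial normal bundle, it is handled by $m$-non-degeneracy exactly as in Proposition \ref{mainprop}(4), so no new phenomena arise in the refined setting.

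Assertion (3) is the part demanding an actual argument. By (2) every $C$ is a transversely cut out, non-degenerate zero, so the implicit function theorem produces, for $(J^1,\mathcal I_A^1)$ sufficiently close to $(J,\mathcal I_A)$, a unique nearby curve $C^1$, while the compactness of Lemma \ref{finiteK} prevents additional curves from appearing; this is precisely the correspondence of Proposition \ref{mainprop}(5) between $\mathcal K_V(A,J,\mathcal I_A)$ and $\mathcal K_V(A,J^1,\mathcal I_A^1)$. It then remains to show this correspondence respects the decomposition, i.e. $\hat A(C^1)=\hat A(C)$. Since $C^1$ is $C^0$-close to $C$ and both meet $V$ transversely, the closing-up construction produces a continuous family of closed curves $\hat C_\tau\subset\hat X$ interpolating $\hat C$ and $\hat C^1$; their class in $H_2(\hat X,S)$ is constant, and the intersection data varies continuously within the fixed class $[\mathcal I_A]$ (the contact orders $s_i$ and the homology classes of the constraints being frozen). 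Hence the lift is unchanged and the correspondence restricts to a bijection $\mathcal K_V(A,\hat A,J,\mathcal I_A)\cong\mathcal K_V(A,\hat A,J^1,\mathcal I_A^1)$ for each $\hat A$.

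The main obstacle is exactly the local constancy of $\hat A$ along the deformation: one must verify that the relative class $[\hat C]\in H_2(\hat X,S)$ produced by the Li--Ruan closing-up of Section \ref{rimtori} depends continuously, hence locally constantly, on the curve. This hinges on maintaining transversality and local positivity of $C$ with $V$ throughout the small deformation, which is guaranteed by staying inside the Baire set of Lemma \ref{finiteK}; once transversality is in force, the lift is defined and continuous and the discrete class cannot jump. Alternatively, and perhaps most cleanly, this argument can be run at the level of the cobordism $\Xi_{\mathfrak s}$ of Lemma \ref{definv}: the lift $\hat A$ is locally constant on the compact $1$-manifold $\Xi_{\mathfrak s}$, so it splits as $\bigsqcup_{\hat A}\Xi_{\mathfrak s}^{\hat A}$, each piece a compact $1$-manifold whose boundary computes the refined counts, yielding the deformation invariance paralleling Theorem \ref{invN}.
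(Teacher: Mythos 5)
Your proposal is correct and takes essentially the same route as the paper: assertions (1) and (2) are simply inherited from the unrefined space $\mathcal K_V(A,J,\mathcal I_A)$, and the only substantive claim is (3), which the paper settles exactly as you do, by noting that a small change in the data cannot create or destroy rim tori because the fibration \ref{rimlift} has discrete fiber $\mathfrak R$. Your continuity argument for the local constancy of the lift $\hat A$ is just a more detailed rendering of the paper's one-line appeal to that fibration structure.
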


\begin{proof}
Only the last claim is not obvious from the results for $\mathcal K_V(A,J,\mathcal I_A)$.  The issue is whether a small change in the data might cause rim tori to disappear or to be generated.  However, this can be ruled out due to the fibration structure in \ref{rimlift}. 
\end{proof}

The definition of the refined invariant now follows from the definitions in Section \ref{relinv}:

\begin{definition}\label{rimrelgr} The refined Ruan invariant for the symplectic hypersurface $V\subset X$ and the class $A\in H_2(X)$ and with initial data class $[\mathcal I_A]$ is denoted $Ru^V(\hat A,[{\mathcal I_A}])$ and is defined by
\begin{equation}
Ru^V(\hat A,[\mathcal I_A])=\sum_{C\in \mathcal K(A,\hat A,J,\mathcal I_A)}r(C,\mathcal I_A).
\end{equation}
\end{definition}

The following is trivial:
\begin{lemma}
\begin{equation}
Ru^V(A,[{\mathcal I_A}])=\sum_{\mathfrak r^{-1} A}Ru^V(\hat A,[{\mathcal I_A}])
\end{equation}
\end{lemma}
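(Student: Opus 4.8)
The plan is to deduce the identity directly from the disjoint decomposition
\[
\mathcal K_V(A,J,\mathcal I_A)=\bigsqcup_{\hat A}\mathcal K_V(A,\hat A,J,\mathcal I_A)
\]
established just above, so that essentially no new analysis is required. First I would recall that, for generic $(J,\mathcal I_A)$, each curve $C\in\mathcal K_V(A,J,\mathcal I_A)$ determines a single class $\hat A\in\mathcal H_V^X$, obtained by restricting $C$ to $X\backslash V$, closing it up in $\hat X$ as in Section \ref{rimtori}, and recording the resulting class in $H_2(\hat X,S)$ together with its intersection data. Because $\mathfrak r$ in \ref{rimlift} is a genuine fibration with fiber $\mathfrak R$, this lift is well defined and unique, so the index set of the disjoint union is exactly the fiber $\mathfrak r^{-1}A$ of lifts of $(A,\mathcal I_A)$, with the $V_A$-coordinate pinned down by the fixed class $[\mathcal I_A]$.

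With the decomposition in hand, the second step is purely bookkeeping. Since all the spaces involved are finite for generic $(J,\mathcal I_A)$, and the weight $r(C,\mathcal I_A)$ of Section \ref{relinv} depends only on the curve $C$ and the initial data, I would split the defining finite sum
\[
Ru^V(A,[\mathcal I_A])=\sum_{C\in\mathcal K_V(A,J,\mathcal I_A)}r(C,\mathcal I_A)
\]
over the pieces of the disjoint union, grouping terms according to $\hat A$. Each inner sum over $\mathcal K_V(A,\hat A,J,\mathcal I_A)$ is, by Definition \ref{rimrelgr}, equal to $Ru^V(\hat A,[\mathcal I_A])$, which yields the claimed equality after reindexing by $\mathfrak r^{-1}A$.

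The only point that is not entirely formal is the disjointness of the decomposition, namely that no curve is counted in two different strata and that every curve appears in some stratum. This is exactly the content of the preceding remark that each $h\in\mathcal K_V(A,J,\mathcal I_A)$ has a uniquely associated class $\hat A$, and it rests on the fibration structure of \ref{rimlift}. Once that is granted, the statement is, as advertised, immediate, since regrouping a finite sum along a partition of its index set changes nothing.
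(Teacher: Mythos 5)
Your proof is correct and coincides with the paper's own (unwritten) argument: the paper labels this lemma as trivial precisely because it follows by regrouping the finite sum $\sum_{C\in\mathcal K_V(A,J,\mathcal I_A)}r(C,\mathcal I_A)$ along the disjoint decomposition $\mathcal K_V(A,J,\mathcal I_A)=\bigsqcup_{\hat A}\mathcal K_V(A,\hat A,J,\mathcal I_A)$ introduced just before, with each block summing to $Ru^V(\hat A,[\mathcal I_A])$ by definition. Your additional remark that disjointness rests on the uniqueness of the lift via the fibration structure is exactly the point the paper relies on as well.
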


The invariant properties of $Ru^V(\hat A,[{\mathcal I_A}])$ are more subtle than in the non-refined case.  Using cobordism arguments as in Section \ref{relinv}, it can be shown that $Ru^V(\hat A,[{\mathcal I_A}])$ is an invariant of the symplectic isotopy class $[X,V,\omega]$.  Furthermore, deformations of only the symplectic structure $\omega$ also leave this number invariant.  However, the numbers $r(C,\mathcal I)$ depend on the orientation of the normal bundle of $V$ as well as the almost complex structure on the normal bundle of $V$, hence it is unlikely that it is invariant under deformations of $V$.  This question remains open.  The families of manifolds constructed in \cite{FS}, \cite{Sm} and \cite{HP} should provide a plethora of examples for which to calculate these invariants.

\section{\label{taubesinv}Relative Taubes Invariant for Tori with Trivial Normal Bundle}

Taubes defined an invariant counting tori with trivial normal bundle in \cite{T4}.  This invariant takes into account the bifurcation behavior of sequences of such tori.  In this section, we show that this delicate count can be done in the relative settings without any modification of the Taubes invariant.
\subsection{Behavior of Multiply Toroidal Classes}

\subsubsection{\label{multori} Non-Degeneracy of Multiply Toroidal Classes}
Special consideration must be given to classes representing square 0 tori.  This issue will occur throughout the following sections.  For this reason, we make the following definition: 

\begin{definition}
A class $A\in H_2(X)$ is called multiply toroidal if
\begin{itemize}
 \item $A^2=0$,
\item $ K_\omega\cdot A=0$ and
\item  the class $A$ is divisible, i.e. $A=kA'$ with $k>1$.  
\end{itemize}
We call the class $A$ toroidal if the first two conditions hold.
\end{definition}

If $C$ is a torus with trivial normal bundle, then we expand the definition of non-degeneracy:

\begin{definition}\label{0tori}Fix an almost complex structure $J\in\mathcal J_V$.  If $C$ is a torus with trivial normal bundle, fix a positive integer $n\in \mathbb Z$ and call $C$ n-non-degenerate if $C'$ is non-degenerate for every holomorphic covering map $f:C'\rightarrow C$ of degree n or less.
\end{definition}

We can make this more precise:  For any representatives of a (multiply) toroidal class, it is not possible to distinguish different values of $k$ by marked points.  Therefore, all such curves must be considered when constructing the invariant.  Multiple covers of a torus with trivial normal bundle are classified by the fundamental group $\pi_1(C)=\mathbb Z\oplus \mathbb Z$.  Consider a homomorphism $\rho:\pi_1(C)\rightarrow P_m$, $P_m$ the permutation group on $m$ letters.  $\rho$ defines, via a representation of $P_m$ on $\mathbb R^m$, a m-plane bundle $V_\rho$ over $C$.  This will allow us to distinguish multiple covers of the base curve $C$ in the normal bundle.  We can naturally extend the operator $D$ to the space of sections of $V_\rho\times N$.  We can now make precise the definition of $n$-nondegenerate:

\begin{definition}Let $C$ represent a (multiply) toroidal class.  Fix $n\in\mathbb Z$.  The curve $C$ is $n$-nondegenerate if for all $m\in\{1,..,n\}$ and for all representations $\rho:\pi_1(C)\rightarrow P_m$ the operator $D$ on the space $V_\rho\times N$ has trivial kernel.

\end{definition}

This definition ensures, that any curves which are counted and which stem from multiple covers of the curve $C$ behave well.   In particular, the following Lemma was proven by Taubes:

\begin{lemma}(Lemma 5.4, \cite{T4}) Let $A\in H_2(X)$ be toroidal and $n\in\mathbb Z^+$.  Then there is an open and dense subset of smooth, $\omega$-compatible almost complex structures $J\subset \mathcal J_\omega$ on $X$ with the property, that every embedded, pseudoholomorphic torus in class $A$ is $n$-nondegenerate.

\end{lemma}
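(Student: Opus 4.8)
The plan is to translate $n$-nondegeneracy into an invertibility statement for a finite collection of twisted linear operators and then run a Sard--Smale transversality argument in the style of Lemma \ref{genericA}. Since $A$ is toroidal we have $A^2=0$ and $K_\omega\cdot A=0$, so $d_A=0$, and the adjunction formula gives $g(A)=1$; thus every embedded pseudoholomorphic curve in class $A$ is a torus $C$ with trivial normal bundle $N$, and the associated real-linear operator $D$ (Eq. \ref{D}) has index $2d_A=0$. Hence $n$-nondegeneracy of $C$ is equivalent to the invertibility of $D$ twisted by $V_\rho$ for every $\rho\colon\pi_1(C)\to P_m$ with $m\le n$. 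First I would reduce this to finitely many scalar problems. A connected cover of the torus $C$ is again a torus, and for fixed $n$ there are only finitely many covers $\pi\colon C'\to C$ of degree at most $n$, since index-$d$ sublattices of $\mathbb Z^2$ are finite in number. Equivalently, because $\pi_1(C)=\mathbb Z^2$ is abelian and $P_m$ is finite, $V_\rho\otimes\mathbb C$ splits as a finite direct sum of flat line bundles $L_\chi$ whose holonomy characters $\chi\colon\mathbb Z^2\to U(1)$ take values in roots of unity of bounded order; the twisted operator splits accordingly as $\bigoplus_\chi D_\chi$, where $D_\chi$ acts on $L_\chi\otimes N$. Since $L_\chi$ is flat, $\deg(L_\chi\otimes N)=0$ and each $D_\chi$ again has index $0$, so it suffices to find $J$ making every one of these finitely many $D_\chi$ invertible, i.e. (being index $0$) surjective.

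For the density statement I would set up the universal moduli space of pairs $(C,J)$ with $C$ an embedded $J$-holomorphic torus in class $A$, realised as the zero set of a section of a Banach bundle over a Banach manifold, using Taubes' trick to pass between $C^l$ and smooth structures as in Section \ref{generic}. For each relevant $\chi$ I would form the universal twisted linearization and show it is surjective at its zeroes, which is exactly the pointwise computation of Lemma \ref{genericA}: at a point $x_0\in C$ with $du(x_0)\ne 0$ (which holds off a finite set), a cokernel element $\eta$ of the universal $D_\chi$-linearization can be killed by a variation $Y$ of $J$ supported near $u(x_0)$, using Lemma 3.2.2, \cite{McS}, to produce $Y_0$ with $\langle Y_0\,du\,i,\eta\rangle>0$; the twist by the flat bundle $L_\chi$ does not enter this local estimate because $L_\chi$ is locally trivial. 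Thus $\eta$ vanishes on an open set, and $D_\chi^*\eta=0$ together with Aronszajn's theorem forces $\eta\equiv 0$. Here, in contrast to the relative case, $J$ ranges over all of $\mathcal J_\omega$, so there is no constraint from $V$ and the argument applies verbatim. Applying Sard--Smale to the projection onto $\mathcal J_\omega$ for each of the finitely many $\chi$ and intersecting the resulting residual sets yields a residual, hence dense, set of $J$ for which every embedded torus in class $A$ is $n$-nondegenerate.

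Finally, to upgrade density to open and dense I would invoke compactness. For generic $J$ the moduli space of embedded tori in class $A$ is $0$-dimensional, and Gromov compactness together with the dimension estimates in the proof of Lemma \ref{conv} (ruling out that a sequence of embedded tori limits onto a multiply covered or lower-class configuration) shows it is compact, hence finite. Invertibility of the finitely many operators $D_\chi$ attached to these finitely many tori is an open condition, and the implicit function theorem shows the tori persist without new ones appearing under small deformations of $J$, which gives openness. I expect the main obstacle to be precisely this compactness input: because $A$ may be multiply toroidal, a sequence of embedded tori can in principle converge to a multiple cover of a torus in class $A/k$, which is exactly the degeneration the Gromov--Taubes count is designed to detect. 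Controlling it, so that the count of $n$-nondegenerate tori is genuinely locally constant, is where the delicate analysis of square-$0$ tori and their multiple covers enters, and is the part that genuinely requires Taubes' refined arguments rather than the routine transversality above.
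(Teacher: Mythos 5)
First, a point of comparison: the paper never proves this lemma --- it is quoted as Lemma 5.4 of \cite{T4} and used as a black box, the paper's own work in this direction being the relative analogue (Lemma \ref{A=V}), whose proof again defers to Taubes. Your proposal must therefore be judged against Taubes' argument, and its skeleton is the right one: $n$-nondegeneracy does reduce to invertibility of finitely many index-zero twisted operators $D_\chi$ (with the caveat that $D$ is only $\mathbb R$-linear, so the splitting of $V_\rho\otimes N$ by characters is really a statement about its complexification and conjugate pairs of characters), and ``density plus openness'' is the correct shape of the argument. The first genuine gap is in your transversality step. The inequality $\langle Y_0\,du\,i,\eta\rangle>0$ of Lemma \ref{genericA} pairs the $J$-variation term of the curve equation $\overline\partial_Ju=0$ against a cokernel element valued in $\Lambda^{0,1}\otimes u^*TX$; a cokernel element of the twisted problem is instead valued in $\Lambda^{0,1}\otimes(L_\chi\otimes N)$, so your pairing is not even defined, and no term of the form $Y\circ du\circ i$ can reach it. Triviality of $\ker D_\chi$ for generic $J$ is not transversality of the curve equation: one must enlarge the universal space to triples $(u,J,\xi)$ with $\overline\partial_Ju=0$ and $\xi$ a \emph{nonzero} element of $\ker D_\chi$, and the perturbations making this system's linearization surjective are variations $Y$ that vanish along the torus but have prescribed normal $1$-jet; these leave the curve equation untouched while changing the coefficients $\nu,\mu$ of Eq. \ref{D}, and the resulting term $(\delta\nu)\xi+(\delta\mu)\overline\xi$ is what one pairs with $\eta$, at a point where both $\xi$ and $\eta$ are nonzero (so unique continuation must be invoked for $\xi$ as well, not only for $\eta$). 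This is precisely the mechanism of the paper's Lemma \ref{genericV} --- the operator $\mathcal G$, the term $\nabla_\xi Y$, the standing restriction $\xi\ne 0$ --- and not the computation of Lemma \ref{genericA}, which your sketch claims is ``exactly'' what is needed.

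Second, you concede the openness half yourself: your last paragraph says that controlling sequences of embedded class-$A$ tori limiting onto multiple covers ``genuinely requires Taubes' refined arguments rather than the routine transversality above.'' But that is exactly the content of the lemma being proved; a proof of Lemma 5.4 cannot outsource its hardest step to Lemma 5.4. Moreover, what openness requires is not merely compactness of one moduli space for one generic $J$: it asserts that $n$-nondegeneracy of every class-$A$ torus persists for all $J'$ near a good $J$, and a sequence of embedded class-$A$ tori for $J_k\to J$ may Gromov-converge to an $m$-fold cover of an embedded torus in a class $B$ with $A=mB$. Whether class-$A$ tori persist or bifurcate near such a limit is governed by the degree-$m$-cover-twisted operators of the class-$B$ torus, i.e. by \emph{its} $m$-nondegeneracy, so the argument must be run simultaneously for all classes dividing $A$ --- which is the very reason Taubes' definition quantifies over covers in the first place. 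Your Sard--Smale argument yields density; the open-and-dense statement, which is what the lemma asserts and what the deformation invariance of the count in Section \ref{taubesinv} actually uses, is the part your proposal leaves unproved.
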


Further arguments, similar to those in step 7 of the proof of Prop. 7.1 in \cite{T1}, together with this Lemma ensure that the space of curves in class $A$ is finite and invariant under small deformations of the symplectic and almost complex structures.

\subsubsection{Contact Order for Tori with Trivial Normal Bundle}

If the curve $C$ is a multiply covered torus with trivial normal bundle which intersects $V$ non-trivially, then we must account for this in the contact order.  Lemma \ref{values} states, that in this case, to have proper initial data,  we must have the base curve $C'$ of $C$ intersecting $V$ with contact order 1 at each intersection point, where the contact order of $C'$ and $V$ is given by Def. \ref{contord}.  Thus we make the following definition

\begin{definition}\label{toricontact}Assume $C$ is a multiply covered torus with trivial normal bundle in class $A=mA'$ such that $A'\cdot \mathfrak V\ne 0$.  Then the contact order of the curve $C$ at each intersection point of $C'$ with $V$ is given by an $m$-tuple $(1,...,1)$.

\end{definition}

Taubes' calculations all have initial data  $\mathcal I_A=\emptyset$.  The relative setting must deal with the presence of initial data on $V$:

\begin{lemma}Let $A=mT$ be multiply toroidal.  Curves in this class can occur only if the only entries in $\mathcal I_A$ are in $\Upsilon$ each with an $m$-tuple $(1,...,1)$ as contact order.

\end{lemma}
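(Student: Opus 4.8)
The plan is to deduce the statement directly from Lemma \ref{values}, combined with the contact bookkeeping of Definition \ref{toricontact}. First I would record that a multiply toroidal class $A = mT$ satisfies $A^2 = 0$ and $K_\omega \cdot A = 0$, so by definition $d_A = (A^2 - K_\omega \cdot A)/2 = 0$. Since the relative Taubes count only sees submanifolds meeting proper initial data, I may assume $\mathcal I_A$ is proper and apply Lemma \ref{values} in the case $d_A = 0$. That lemma forces $d_1 = d_2 = l_1 = l_2 = 0$, so the sets $\Omega_{d_1}$, $\Gamma_{d_2}$, $\Omega_{l_1}$ and $\Gamma_{l_2}$ are all empty, while $l_3 = A \cdot \mathfrak V = l_A$ and every contact order is $s_i = 1$. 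Hence the only possibly nonempty set in $\mathcal I_A$ is $\Upsilon = \Upsilon_V^{l_3}$, which already gives the first half of the assertion.

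Next I would supply the geometric interpretation producing the $m$-tuple $(1,\dots,1)$. Writing $A = mT$, a curve representing $A$ is realized as an $m$-fold holomorphic cover of an embedded base torus $T_0$ in class $T$ with trivial normal bundle, in the sense of Definition \ref{0tori}. For generic $J \in \mathcal J_V$ with $A \cdot \mathfrak V > 0$, positivity of intersections shows that $T_0$ meets $V$ transversally and locally positively in exactly $T \cdot \mathfrak V$ points, each of contact order $1$ in the sense of Definition \ref{contord}. Applying Definition \ref{toricontact} with $A' = T$, the contact of the cover over each such base point is recorded by the $m$-tuple $(1,\dots,1)$. A count then confirms consistency with Lemma \ref{values}: the total number of order-$1$ contacts is $m\,(T \cdot \mathfrak V) = (mT) \cdot \mathfrak V = A \cdot \mathfrak V = l_3$, exactly the size of $\Upsilon$.

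The step I expect to carry the real weight is the exclusion $l_1 = l_2 = 0$, i.e. that no point datum $\Omega_{l_1}$ or curve datum $\Gamma_{l_2}$ on $V$ may appear. I would not reprove this but cite Lemma \ref{values}, whose argument exploits that $d_A = 0$ leaves no freedom to prescribe contact locations independently; the underlying geometric reason, which I would also remark, is that a multiply covered square-$0$ torus moves in a positive-dimensional family and hence cannot be pinned to prescribed points or curves on $V$ without over-determining the system. Assembling the three observations --- that $d_A = 0$, the structure dictated by Lemma \ref{values}, and the repackaging of the order-$1$ contacts as $m$-tuples via Definition \ref{toricontact} --- then yields the claim.
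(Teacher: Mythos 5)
Your proposal is correct and follows essentially the same route as the paper's own (very short) proof: the paper likewise observes that a multiply toroidal class has $d=0$, invokes Lemma \ref{values} to force $d_1=d_2=l_1=l_2=0$ and all $s_i=1$, and then cites Definition \ref{toricontact} to package the contacts of the $m$-fold cover as $m$-tuples $(1,\dots,1)$. The only cosmetic difference is bookkeeping: the paper records $l_3=T\cdot\mathfrak V$ entries of $\Upsilon$, each carrying an $m$-tuple, while you count the $A\cdot\mathfrak V=m(T\cdot\mathfrak V)$ individual order-$1$ contacts and then regroup them over the base intersection points, which amounts to the same thing.
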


\begin{proof}A multiply toroidal class has $d_T=0$, hence by Lemma \ref{values}, we have $d_1=d_2=l_1=l_2=0$ and $l_3=T\cdot \mathfrak V$.  Moreover, Def. \ref{toricontact} determines the contact structure.
\end{proof}

\subsection{Space of Relative Tori}  We define a space of relative submanifolds: 

\begin{definition}\label{defKT}Fix $A\in H_2(X)$ and a set of proper initial data $\mathcal I_A$.  Assume that $A=mT$ is multiply toroidal.  Choose an almost complex structure $J\in\mathcal J_V$.  Denote the set $\mathcal K^T_V= {\mathcal K}^T_V(A,J,\mathcal I_A)$ of connected $J$-holomorphic submanifolds $C\subset X$ which satisfy

\begin{itemize}
 
\item If $l_{A}=0$, then $C\cap V=\emptyset$ or $V$.
\item If $l_{A}>0$, then $C$
\begin{enumerate}
\item intersects $V$ locally positively and transversely and 
\item intersects $V$ at precisely $l_3$ distinct points each with order given by the $m$-tuple $(1,...,1)$.
\end{enumerate}
\end{itemize}
\end{definition}

We wish to show that Taubes' Lemmas still hold in the relative case.   We consider three cases:
\begin{itemize}
\item $A_i\cdot \mathfrak V\ne 0$.
\item $A_i\ne \mathfrak V$ and $A_i\cdot \mathfrak V=0$ or
\item $A_i=\mathfrak V$ and $V$ is a square 0 torus.  
\end{itemize}
The previous Lemma shows that we do not have to consider any restricting insertions.  We begin with the first case.

\begin{lemma}\label{A=V}
Assume $A$ is multiply toroidal and fix $n\in \mathbb N$.  There is an open and dense subset $U\subset \mathcal J_V$ with the following properties:  When an almost complex structure $J$ is chosen from $U$, then

\begin{enumerate}

\item $\mathcal K^T_V(J,\mathcal I_A)$ is a finite collection of points and each point is $n$ - nondegenerate. Moreover, if  $A\ne m\mathfrak V$ and $A\cdot \mathfrak V=0$, there exists a neighborhood $\mathfrak N_V$ of $V$ such that no curve in class $A$ lies therein. 
\item There is an open neighborhood in $\mathcal J_V$ such that every almost complex structure therein obeys the previous assertion and the number of points of $\mathcal K_V^T$ is invariant in this neighborhood.
\end{enumerate}

\end{lemma}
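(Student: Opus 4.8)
The plan is to derive the relative statement from Taubes' absolute genericity (Lemma 5.4, \cite{T4}) by checking that every perturbation his argument requires can be performed inside $\mathcal J_V$. The key observation is that in both situations covered by the Lemma the class $A$ is not a multiple of $\mathfrak V$: if $A\cdot\mathfrak V\neq 0$ then $A$ being a multiple of $\mathfrak V$ would force $\mathfrak V^2=0$ and hence $A\cdot\mathfrak V=0$, a contradiction; in the second situation $A\neq m\mathfrak V$ is assumed. Consequently any embedded $J$-holomorphic representative $C$ of $A$ satisfies $C\not\subset V$, so $C\cap(X\setminus V)$ is open and dense in $C$. By the previous Lemma the admissible initial data consist only of $\Upsilon$-entries with contact $m$-tuple $(1,\dots,1)$, so $\mathcal K^T_V(J,\mathcal I_A)$ imposes no point constraints and merely counts the embedded tori in class $A$ (meeting $V$ transversely when $A\cdot\mathfrak V>0$).

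First I would establish $n$-nondegeneracy and finiteness. By Definition \ref{0tori}, $n$-nondegeneracy requires that for each $m'\le n$ and each $\rho:\pi_1(C)\to P_{m'}$ the operator $D$ on $V_\rho\otimes N$ have trivial kernel. I would run the transversality scheme of Lemma \ref{genericA} for these twisted operators, choosing all perturbations $Y$ of $J$ supported in $X\setminus V$: over the open set $C\cap(X\setminus V)$ the constraint defining $\mathcal J_V$ is vacuous, so the matrix construction of Lemma 3.2.2, \cite{McS}, applies verbatim, and Aronszajn's unique continuation theorem then forces any cokernel element that vanishes there to vanish identically. Sard-Smale, upgraded by Taubes' trick to a statement about smooth structures, yields an open and dense $U_0\subset\mathcal J_V$ for which every embedded torus in class $A$ is $n$-nondegenerate. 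Finiteness follows from Gromov compactness together with the index bookkeeping of Lemma \ref{dim}, which for stable $V$ excludes limit components descending into $V$, and the argument of step 7 in the proof of Prop. 7.1, \cite{T1}.

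Next I would treat the clause for $A\neq m\mathfrak V$ with $A\cdot\mathfrak V=0$. Here positivity of intersections gives a clean dichotomy: a $J$-holomorphic $C$ in class $A$ either lies in $V$ or is disjoint from it, since any intersection of $C\not\subset V$ with $V$ contributes positively to $A\cdot\mathfrak V$. As $A$ is not a multiple of $\mathfrak V$, no representative lies in $V$, so every curve in class $A$ is disjoint from $V$. The resulting finite family is compact and disjoint from $V$; were a sequence in class $A$ to accumulate on $V$, Gromov convergence would produce a limit meeting $V$, contradicting $A\cdot\mathfrak V=0$ (with Lemma \ref{dim} again ruling out components in $V$). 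Hence a single tubular neighborhood $\mathfrak N_V$ of $V$ contains no curve in class $A$.

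The main obstacle is the first step: one must be sure that Taubes' genericity for the twisted operators survives the passage from $\mathcal J_\omega$ to the much thinner $\mathcal J_V$, the very incompatibility flagged in the remark after Prop. \ref{mainprop}. The resolution is precisely that $C\not\subset V$ supplies an open reservoir of unconstrained perturbation directions in $X\setminus V$, so no genericity is lost in restricting to $\mathcal J_V$; this is the same mechanism underlying Lemma \ref{genericA}. Finally, assertion (2) is routine: $n$-nondegeneracy is an open condition and the compactness just established is stable under small perturbations, so along any short path in $\mathcal J_V$ the corresponding moduli space is a compact $1$-manifold realizing a cobordism between the endpoint fibers, whence the count is locally constant. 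Taking $U$ to be the intersection of $U_0$ with this open set of almost complex structures completes the argument.
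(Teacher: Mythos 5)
There is a genuine gap, and it comes from your opening reduction: the claim that ``in both situations covered by the Lemma the class $A$ is not a multiple of $\mathfrak V$'' misreads the statement. The blanket hypothesis of the Lemma is only that $A$ is multiply toroidal; the condition ``$A\ne m\mathfrak V$ and $A\cdot\mathfrak V=0$'' restricts just the \emph{Moreover} clause about the neighborhood $\mathfrak N_V$, not assertions (1) and (2) themselves. In particular the case $A=m\mathfrak V$ with $V$ a square-$0$ torus lies squarely inside the Lemma --- it is the third of the three cases the paper lists immediately before the statement --- and by Definition \ref{defKT} the space $\mathcal K_V^T$ then contains curves with $C\cap V=V$, i.e. multiple covers of the hypersurface $V$ itself. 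For such curves your entire mechanism collapses: $C\subset V$, so there is no open subset of $C$ lying in $X\setminus V$, perturbations $Y$ of $J$ supported there cannot be paired against a cokernel element along $C$, the matrix construction of Lemma 3.2.2 of \cite{McS} cannot be carried out inside $\mathcal J_V$, and Aronszajn's theorem has nothing to propagate. This is precisely the incompatibility flagged in the remark after Prop. \ref{mainprop}, and it is the most delicate case of the Lemma; the paper closes it by a different observation, namely that Taubes' treatment of multiply covered square-$0$ tori (Lemmas 5.3 and 5.4 of \cite{T4}) is already an argument relative to the fixed underlying embedded torus --- it concerns the operators $D$ twisted by representations $\rho:\pi_1(C)\rightarrow P_m$ over that torus, not perturbations of $J$ that move the torus --- and therefore transfers verbatim when the underlying torus is $V$.

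For the two cases you do treat, your argument essentially matches the paper: for $A\cdot\mathfrak V\ne 0$ the paper likewise combines the genericity of Section \ref{generic} (perturbations localized off $V$, as in Lemma \ref{genericA}) with a rerun of Taubes' proofs, and for $A\ne m\mathfrak V$, $A\cdot\mathfrak V=0$ both arguments hinge on positivity of intersections forcing curves off $V$. One secondary discrepancy is worth noting, though: the paper proves the neighborhood statement by contradiction, with nested neighborhoods $\mathfrak N_i\rightarrow V$, a Gromov limit along a \emph{varying} sequence $J_i\in\mathcal J_V$, and a genus analysis of the resulting map $T^2\rightarrow V$ (impossible for $g(V)\ge 2$, impossible in class $\ne m\mathfrak V$ for $g(V)=1$, excluded by adjunction for $g(V)=0$). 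This yields a neighborhood that works uniformly as $J$ varies, which is what lets perturbations be localized away from $V$ and makes assertion (2) go through. Your version --- finiteness for a fixed $J$, then a distance argument --- gives only a $J$-dependent neighborhood, so your ``routine'' openness claim in (2) quietly assumes the uniformity it is supposed to establish: along a path $J_t$ new curves could a priori appear arbitrarily close to $V$ and degenerate into it, and ruling that out is exactly the content of the paper's nested-neighborhood argument.
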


\begin{proof} {$\bf A\cdot \mathfrak V\ne 0$}  The calculations in Section \ref{generic} ensure the existence of a sufficiently large set of almost complex structures such that curves in class $A$ behave as expected.  Once this set has been found, the proofs of Lemma 5.3 and 5.4 in \cite{T4} can be repeated to give the result in this case.

{$\bf A\cdot \mathfrak V=0$}  Assume first that $A\ne m\mathfrak V$.  We will show, that there exists a neighborhood $\mathfrak N_V\subset X$ of $V$ such that no pseudoholomorphic submanifold $C$ of class $A$ has $C\cap \mathfrak N_V\ne \emptyset$.  If this holds, then all results on multiply toroidal classes from \cite{T} hold for the class $A$:  Outside of $\mathfrak N_V$ there are no restrictions on the almost complex structure $J\in\mathcal J_V$, hence all the results proven by Taubes for multiply toroidal classes (Lemmas 5.3,5.4) hold in this case as well.  This proves the Lemma.

Assume no such neighborhood exists.  Then let $\{\mathfrak N_i\}$ be a sequence of nested neighborhoods converging to $V$.  Denote $A=n\mathfrak T$ with $\mathfrak T$ a toroidal class.  Let $\{T_i,J_i\}$ be a sequence of tori in class $\mathfrak T$ such that $T_i\subset \mathfrak N_i$ and $J_i\in\mathcal J_V$.  Gromov's compactness Theorem ensures that there exists a limit curve $T$ and a limit almost complex structure $J_i\rightarrow J$ such that $g(T)=1$ and Lemma \ref{conv} shows that for generic $J$ the limit curve $T$ is an embedded square 0-torus of class $q\mathfrak T$.  The sequence of neighborhoods ensures that $T\subset V$.  If $g(V)\ge 2$, then there can be no nontrivial smooth map $T^2\rightarrow V$.

Assume that $g(V)=1$.  Then we have produced a $J$-holomorphic map $T^2\rightarrow V$ in class $A\ne m\mathfrak V$ which is onto $V$.  Such a nontrivial map does not exist.

Assume that $g(V)=0$.  Then we have a $J$-holomorphic map from a torus $T^2$ to the sphere.  This must be a multiple cover of $S^2$.  However, $K_\omega\cdot A=K_\omega\cdot\mathfrak T=0$ and hence by the adjunction formula there exists no such map.

Consider now the second case: $A=m\mathfrak V$, hence any curve in this class can be decomposed into components such that we have either a multiple cover of $V$ or a multiple cover of a curve in the class $\mathfrak V$ which does not meet $V$.  For those curves not meeting $V$, arguments similar to the previous ones show that Taubes' results hold.  We consider therefore only the case of a multiple cover of the hypersurface $V$.  An analysis of Taubes' results shows, that the argument in the multiply toroidal case is essentially a relative argument on the fixed square 0 torus underlying the multiple cover.  In our case this is the hypersurface $V$, hence his argument transfers completely.  

\end{proof}

\subsection{Relative Taubes Invariant}

Taubes defined a number $Qu(e,n)$ for multiply toroidal classes and showed that it is an invariant of the deformation class of $\omega$(Prop. 5.7, \cite{T4}, see also Prop \ref{taubesprop}).  This motivates the following slightly modified relative version:

\begin{definition}
Let $n\ge 1$ be an integer and $T$ toroidal and indivisible.  Choose $(J,\mathcal I_A)$ from the Baire set obtained in Lemma \ref{A=V}.  Define a relative Taubes invariant
\[
Qu^V(T,n)=\sum_{\{(C_k,m_k,\mathcal I_k)\}}\prod_k r(C_k,m_k,\mathcal I_k)
\] 
where we sum over all sets $\{(C_k,m_k,\mathcal I_k)\}$ with
\begin{enumerate}
\item $C_k$ an embedded torus in class $q_kT$ for some $q_k\le n$,
\item $m_k\ge 1$ and $n=\sum q_km_k$ and
\item $\mathcal I_k$ contains tuples $(1,...,1)$ of length $q_k$ at each of the $T\cdot\mathfrak V$ intersection points.
\end{enumerate}
\end{definition}
The value of $r(C,m,\mathcal I_k)$ is given by the value of $r(C,m)$ as defined by Taubes for multiply toroidal classes in \cite{T4}.

The results in Lemma \ref{A=V} show that only if $A\cdot \mathfrak V\ne 0$ do we need to reconsider the definition of the invariant $Qu(T,n)$ as given by Taubes.  However, an analysis of the proof of Proposition 5.7, which states that $Qu(e,n)$ is an invariant of the deformation class of $\omega$ shows that this proof too relies on the existence of a sufficiently generic set of almost complex structures such that curves behave as expected for any $J$ chosen therein.  Hence the invariant defined by Taubes can be directly used in the relative setting with the modifications given above, i.e. $Qu^V(T,n)$ is also an invariant of the deformation class of $(X,V,\omega)$.

\section{Relative Gromov-Taubes Invariants\label{rgt}}

The invariants defined in the previous sections are concerned with counting of connected submanifolds.  In this section we expand this to allow disconnected invariants as in the Gromov-Taubes invariants defined in \cite{T4}.  These relative GT-invariants will make use of the invariants of the previous sections.

\subsection{The Space of Relative Submanifolds}

We now introduce the space of relative submanifolds ${\mathcal R}_V(A,J,{\mathcal I_A})$.  This definition will be rather technical, however the general idea is simple:  We want to consider all submanifolds $C$, not necessarily connected, which contact $V$ in a very controlled manner.  This is determined by the initial data $\mathcal I_A$ and we ensure that we contact $V$ only once for every given geometric object with the required contact order.  Moreover, the curve $C$ shall meet each geometric object in the initial data $\mathcal I_A$.  We make this precise in the following definition:

\begin{definition}\label{defR}Fix $A\in H_2(X)$ and a set of proper initial data $\mathcal I_A$.  Choose an almost complex structure $J\in\mathcal J_V$.  Denote the set ${\mathcal R} = {\mathcal R}_V(A,J,\mathcal{I}_A)$ of unordered sets of tuples $\{(C_i,m_i,{\mathcal I}_i)\}$ of disjoint, connected $J$-holomorphic submanifolds $C_i\subset X$ with

\begin{enumerate}
\item positive integers $m_i$ and
\item unordered subsets ${\mathcal I}_i$ of ${\mathcal I_A}$ with parameters $d_*^i, l_*^i$ 
\end{enumerate}

satisfying the following constraints:

\begin{itemize}
\item Let $A_i$ represent the homology class of $C_i$ and denote $d_{A_i}$ as in \ref{d} and $l_{A_i}$ as in \ref{l}.  Require $d_{A_i}\ge 0$, $l_{A_i}\ge 0$ and the set $\mathcal I_i$ to be a proper initial data set for the class $A_i$. 
\item If $d_{A_i}>0$, then $C_i$ 
\begin{enumerate}
\item contains precisely $d_1^i$ members of $\Omega_{d_1}$ and
\item  intersects each member of $\Gamma^X_i$ exactly once.
\end{enumerate}
\item If $l_{A_i}=0$, then $C_i\cap V=\emptyset$ or $V$.
\item If $l_{A_i}>0$, then $C_i$
\begin{enumerate}
\item intersects $V$ locally positively and transversely,
\item intersects $V$ at precisely $l_1^i$ points of $\Omega_{l_1}$ and
\item  intersects each member of $\Gamma^V_i$ exactly once.  
\item The remaining $l_3^i$ intersections with $V$ are unconstrained.
\item Each intersection is of order $s_i$ given in the initial data $\mathcal I_i$ for this component (or by the tuple $(1,..,1)$ if it is multiply toroidal). 
\end{enumerate}
\item If $i\ne i'$, then ${\mathcal I}_i\cap {\mathcal I}_{i'}=\emptyset$, but $\cup {\mathcal I}_{i}={\mathcal I}_A$.

\item The integer $m_i=1$ unless possibly if $C_i$ is a torus with trivial normal bundle and $A_i\cdot V=0$. 
\item $\sum_i m_i A_i=A$.
\end{itemize}
\end{definition}

{\bf Remark:} 
By imposing the condition that the $C_i$ be disjoint allows us to conclude
\begin{equation}
\sum_i d_1^i+d_2^i=d_A,\;\;\;\sum_i l_1^i+l_2^i+l_3^i=l_A.
\end{equation}

The points of $\mathcal R$ consist of submanifolds which are not necessarily connected.  Therefore, a choice of almost complex structure $J$ and initial data $\mathcal I_A$ must be made in such a manner, that any allowed decomposition of the class $A$ into submanifolds respects the initial data and that all these submanifolds are pseudoholomorphic for the fixed almost complex structure.  In particular, the pair $(J,\mathcal I_A)$ must be chosen such that it rules out any unwanted behavior of representatives of the class $A$ and its decompositions.  This motivates the following definition:

\begin{definition}\label{rad}A pair $(J,{\mathcal I})$ of almost complex structure $J\in {\mathcal J}_V$ and initial data ${\mathcal I}$ with $d_1+d_2=d$ and $l_1+l_2+l_3=l$ is called r-admissible, if for each $A\in H_2(X)$ and each proper initial data  $\mathcal I_A\subset \mathcal I$ the following conditions hold:

\begin{enumerate}
\item There are but finitely many connected $J$-holomorphic submanifolds in the class $A$ contacting the initial data $\mathcal I_A$.
\item Each of the submanifolds above is non-degenerate.
\item There exist no connected $J$-holomorphic submanifolds in class $A$ contacting all the data in $\mathcal I_A$ as well as a further insertion.
\item There is an open neighborhood of $(J,\mathcal I)$ in $\mathcal J_V\times [\mathcal I]$ with the property that each point in this neighborhood obeys the previous three points while preserving the number of $J$-holomorphic curves throughout this neighborhood.
\item If $A^2=0=c_1\cdot A$, then each $J$-holomorphic submanifold in Point 1 is n-non-degenerate for each positive integer $n$
\end{enumerate}
\end{definition}

\subsection{Main Result for Disconnected Submanifolds}

\begin{prop} Fix a class $A\in H_2(X)$ and a proper class $[\mathcal I_A]$.  Assume $V$ is a symplectic hypersurface.  Then the set of r-admissible pairs $(J,\mathcal I_A)$ in $\mathcal J_V\times [\mathcal I_A]$ is a Baire subset.  Furthermore, given an r-admissible pair, the following hold:

\begin{enumerate}
\item The set ${\mathcal R}_V(A,J,{\mathcal I_A})$ is a finite set.
\item If $A\ne \mathfrak V$, then ${\mathcal R}_V(A,J,{\mathcal I_A})$ is empty when $d_A<0$. 
\item If $V$ is an exceptional sphere, then ${\mathcal R}_V(\mathfrak V,J,\emptyset)=\emptyset$
\item Every point $h\in {\mathcal R}_V$ has the property, that each $C_i$ with $m_i=1$ is non-degenerate, if $m_i>1$ it is $m_i$-non-degenerate.
\item If $(J^1,\mathcal I_A^1)$ are sufficiently close to $(J,\mathcal I_A)$, then the sets ${\mathcal R}_V$ and ${\mathcal R}_V^1$ have the same number of elements.
\end{enumerate}
\end{prop}

Fix a symplectic hypersurface $V$.  Lemma \ref{index} ensures the existence of a Baire set with pairs $(J,\mathcal I_A)$ such that no component of a submanifold representing $A$ lies in $V$.  We may hence assume that all of our submanifolds are of this form in the following. 

The set of r-admissible pairs $(J,\mathcal I_A)$ is Baire follows from the fact that a countable intersection of Baire sets is again Baire.  To be precise:  Consider a decomposition of $A=\sum m_kA_k$ with $A_k\cdot A_l=0$ if $k\ne l$.  Then for each $A_k$ we consider the space of connected submanifolds $\mathcal K$.  For a Baire set of pairs $(J,\mathcal I_{A_k})$ the set $\mathcal K_V$ has all the properties described in the previous sections.  The space $\mathcal J_V\times \{\mbox{initial data} \}$ decomposes into a product of $\mathcal J_V$ and disjoint initial data sets corresponding to the classes $A_k$.  Then the intersection of two Baire sets $U_k$ and $U_l$ corresponding to $A_k$ and $A_l$ is defined as follows:  We have endowed all of the above products with the product topology.  Denote $p_1$, $p^*_2$ the projections onto $\mathcal J_V$ and the initial data set corresponding to $A_*$.  Then let
\begin{equation}
U_k\cap U_l= p_1(U_k)\cap p_1(U_l)\times p_2^k(U_k)\times p_2^l(U_l)
\end{equation}
define the intersection of the two Baire sets.  This set is still a Baire set in $\mathcal J_V \times \{\mbox{initial data} \}_k \times \{\mbox{initial data} \}_l$ due to the properties of the product topology.  From this the claim follows.

The properties of being r-admissible as given in Def. \ref{rad} are fulfilled by pairs obtained by a countable intersection of all Baire sets found in Sections \ref{generic} and \ref{structure} for any decomposition of $A$ appearing in $\mathcal R_V$.  Moreover, any pair $(J,\mathcal I_A)$ found in this intersection also satisfy assertions 2-4 of Prop. \ref{mainprop}.

Consider now Assertion 1: $\mathcal R_V(A,J,\mathcal I_A)$ is finite.  This will follow from the following result which was proven by Taubes:

\begin{lemma}\label{finiteA}(Lemma 5.5, \cite{T4})  Given $A\in H_2(X)$, there is a Baire subset of ${\mathcal J}_V\times [\mathcal I_A]$ such that when a pair is chosen from this set, then there are but finitely many classes in $H_2(X,\mathbb Z)$ which can be a fundamental class of a $J$-holomorphic submanifold appearing (with some multiplicity) as an element in some $h\in{\mathcal R}_V$.
\end{lemma}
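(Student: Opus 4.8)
The plan is to reduce the statement to a bound on the symplectic energy of the component classes and then to invoke Gromov compactness, following Taubes' argument in \cite{T4} but with the almost complex structures constrained to $\mathcal J_V$. Suppose $h=\{(C_i,m_i,\mathcal I_i)\}\in\mathcal R_V(A,J,\mathcal I_A)$ and let $B=A_i$ be the class of one of its components. Since each $C_i$ is a non-constant $J$-holomorphic submanifold we have $\omega\cdot B>0$, and from $\sum_i m_iA_i=A$ together with $m_i\ge 1$ and the positivity of every summand $\omega\cdot A_i$ we obtain the uniform energy bound $0<\omega\cdot B\le\omega\cdot A=:E$. First I would record the further constraints that hold once $(J,\mathcal I_A)$ is chosen generically as in Sections \ref{generic} and \ref{structure}: by Lemma \ref{smoothK} a represented class satisfies $d_B\ge 0$ (cf. \ref{d}), and by the adjunction formula applied to the embedded component $C_i$ one gets $B^2\ge -1$. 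Moreover, since $V$ is stable, Lemma \ref{dim} lets us discard the strata of curves with components in $V$, so that every component either meets $V$ transversally or is disjoint from it.

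The core of the argument is then homological, and is precisely Taubes' Lemma 5.5: the set
\[
\mathcal C(E)=\{\,B\in H_2(X;\mathbb Z)\;:\;0<\omega\cdot B\le E,\ d_B\ge 0,\ B^2\ge -1\,\}
\]
is finite. I would argue by contradiction. If $\mathcal C(E)$ were infinite, choose distinct classes $B_n\in\mathcal C(E)$, each carrying a connected embedded $J$-holomorphic representative $C_n$; the areas $\omega\cdot B_n\le E$ are uniformly bounded, so after passing to a subsequence the $C_n$ Gromov-converge to a nodal $J$-holomorphic limit. Because $H_2(X;\mathbb Z)$ is discrete and the total homology class is locally constant under Gromov convergence, the classes $[C_n]$ must stabilise, contradicting the assumption that the $B_n$ are pairwise distinct. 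Hence only finitely many classes occur as fundamental classes of components, which is the assertion.

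Two points require the relative input developed earlier, and the second is where I expect the real work to lie. First, the representatives must be produced using almost complex structures in $\mathcal J_V$ rather than in all of $\mathcal J_\omega$; this is legitimate because Section \ref{generic} shows that $\mathcal J_V$ is rich enough (via the Taubes-trick stratification, since $\mathcal J_V$ is not a $C^\infty$ Banach manifold) to force $d_B\ge 0$ for represented $B\ne\mathfrak V$ and to control the class $\mathfrak V$ itself. Second, the Gromov-compactness step must accommodate multiply toroidal component classes and, more seriously, must supply the genus bound on which compactness depends: bounded area alone does not bound the genus $g(B)=1+B^2-d_B$, so the extraction of a convergent subsequence is exactly the subtle analytic point that Taubes' proof of Lemma 5.5 addresses (through the light-cone structure of the intersection form when $b^+=1$ and finer finiteness input when $b^+>1$). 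In the toroidal case I would instead invoke the $n$-nondegeneracy machinery of Section \ref{taubesinv} together with Lemma \ref{conv}, which already isolates square-zero toroidal behavior, so that the convergence argument applies to the underlying embedded tori and their multiple covers. The principal obstacle is therefore verifying that constrained genericity on $\mathcal J_V$ and the stability of $V$ simultaneously deliver the energy bound, the inequality $d_B\ge 0$, the genus control, and the absence of components in $V$ over a single Baire set of pairs $(J,\mathcal I_A)$; granting these, the finiteness follows from the discreteness of homology under bounded-energy Gromov limits.
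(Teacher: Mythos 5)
The paper does not actually prove this lemma: it is quoted as Lemma 5.5 of \cite{T4}, and the only relative content is the (implicit) claim that the genericity results of Sections \ref{generic} and \ref{structure} let Taubes' argument run with $J$ constrained to $\mathcal J_V$. So you are reconstructing an omitted proof, and your skeleton --- a uniform energy bound $0<\omega\cdot B\le\omega\cdot A=E$ on component classes, then a compactness argument, then discreteness of the integral lattice --- is the right one. But two of your steps do not hold up as written.

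First, your ``core homological claim'' is false: the set $\mathcal C(E)=\{B:\ 0<\omega\cdot B\le E,\ d_B\ge 0,\ B^2\ge -1\}$ need not be finite when $b^+(X)>1$. On $T^4$ with $\omega=dx_1\wedge dy_1+dx_2\wedge dy_2$ (and, say, $V$ a symplectic $x_1y_1$-torus, which is stable), the classes $B_n=[T_{x_1y_1}]+n[T_{x_1x_2}]$ satisfy $B_n^2=0$, $K_\omega\cdot B_n=0$, hence $d_{B_n}=0$, and $\omega\cdot B_n=1$ for all $n$, so $\mathcal C(1)$ is infinite. Your own proof betrays the problem: to run the contradiction you silently equip each $B_n\in\mathcal C(E)$ with a $J$-holomorphic representative, which classes in $\mathcal C(E)$ simply do not have. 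The finiteness is irreducibly a statement about classes \emph{represented} by $J$-holomorphic curves for the fixed $J$ (which is all the lemma requires, since component classes of elements of $\mathcal R_V$ are represented), not a consequence of the homological constraints. Second, the compactness step is a genuine hole, as you yourself concede: nodal Gromov compactness needs a genus bound that the energy bound does not supply, and your proposed patch (light-cone structure for $b^+=1$, unspecified ``finer finiteness input'' for $b^+>1$) is not how the gap closes --- indeed no purely homological input can close it, by the example above. The standard fix, which is also why no genericity at all is needed for this particular statement, is to pass to currents: since $J\in\mathcal J_V\subset\mathcal J_\omega$ is compatible, each representative $C_n$ is calibrated by $\omega$, so its mass equals $\omega\cdot B_n\le E$; weak-$*$ compactness of mass-bounded $2$-currents gives a subsequence whose pairings with every closed $2$-form converge, hence the images of $[C_n]$ converge in $H_2(X;\mathbb R)$; discreteness of the image lattice of $H_2(X;\mathbb Z)$, together with finiteness of the torsion subgroup, forces the classes to be eventually constant, contradicting distinctness. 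No genus control, no nodal limit, and no $J$-holomorphicity of the limit current are needed. With these two repairs your argument closes; as written, it does not.
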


\subsection{The Number $GT^V(A)([{\mathcal I_A}])$}

To define  $GT^V(A)([{\mathcal I_A}])$ we will need to begin with a component of $h=\{(C_k,m_k,\mathcal I_k)\}\in \mathcal R_V(A,J,\mathcal I_A)$.  To each such component, we will assign a number $r(C_k,m_k,\mathcal I_k)$.  Once this has been defined, we will define a value $q(h)$ for the point $h$, this will consist of the values $r(C_k,m_k,\mathcal I_k)$ as well as accounting for permutations of the initial data.  Finally, we define the relative invariant:
\begin{definition}\label{relgr} The relative Gromov-Taubes invariant $GT^V(A)([{\mathcal I_A}])$ is defined by
\begin{equation}
GT^V(A)([{\mathcal I_A}])=\sum_{h\in \mathcal R_V(A,J,\mathcal I_A)}q(h).
\end{equation}
If $\mathcal R_V(A,J,\mathcal I_A)=\emptyset$, then $GT^V(A)([{\mathcal I_A}])=0$.
\end{definition}
For a suitably generic choice of pairs $(J,\mathcal I_A)$, this number is well-defined.  In the following, we make this definition precise and show that this number is an invariant of the symplectic deformation class.

{\bf Remark:}  This number depends not only on the class $[\mathcal I_A]$, but actually also on the ordering given in the sets $\Gamma_*$.  Ultimately, this only affects the sign of $GT^V(A)([{\mathcal I_A}])$.  This will be taken into account in the definition of $q(h)$.

\subsection{The Definition of $q(h)$}
\subsubsection{Permutations of the Initial Data}
A point $h\in\mathcal R$ need not meet the data  $\mathcal I_A$ in the order predicated in the class $[\mathcal I_A]$.  This ordering determines an orientation of the corresponding moduli space of maps however and thus any permutation of it must be taken into account when defining an invariant.  Our data $\mathcal I_A$ contains three types of geometric data: points, 1-dimensional curves and the hypersurface $V$.  Neither the points nor the hypersurface $V$ change the orientation under rearrangement.  Recall the sets $\Gamma_{d_2}$ and $\Gamma_{l_2}$ and consider only the curves and the ordering.  We can rearrange each set by a permutation $\pi_{d_2}$ resp. $\pi_{l_2}$ as follows:  Consider the point $h=\{(C_k,m_k,\mathcal I_k)\}$.  This point comes with an ordering.  For each $i$ define the sets $\Gamma_1^k\subset \Gamma_{d_2}$ and $\Gamma_2^k\subset \Gamma_{l_2}$ consisting of the data in $\mathcal I_k$ in the corresponding sets.  Reorder the data in $\Gamma_*^k$ in ascending fashion according to the ordering given in $\mathcal I_A$.  Then $\sqcup \Gamma_*^k$ defines a permutation of the data in $\Gamma_{d_2}$ resp. $\Gamma_{l_2}$.

\begin{definition}\label{p}
Define $p(h)=\mbox{sign}(\pi_{d_2})\mbox{sign}(\pi_{l_2})$.
\end{definition}

Equivalently, we could consider the set $\Gamma=\Gamma_{d_2}\sqcup\Gamma_{l_2}$ and a corresponding permutation $\pi$ which consists of the two permutations $\pi_{d_2}$ and $\pi_{l_2}$.  Then $p(h)=$sign$(\pi)$.

The ordering on the curve $h$ is not fixed.  We must show that a relabeling of the curves $C_k$ will leave the value of $p(h)$ unchanged.  Let $d_2^k$ and $l_2^k $ denote the number of elements in $\Gamma_1^k$ resp. $\Gamma_2^k$.  Then the invariance of $p(h)$ under reordering follows from 

\begin{lemma}
The value $d^k_2+l_2^k$ is even.
\end{lemma}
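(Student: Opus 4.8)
The plan is to obtain the evenness directly from the dimension constraint already built into the definition of proper initial data, rather than from any geometric consideration. Recall that by Definition \ref{defR} the subset $\mathcal{I}_k$ attached to the component $C_k$ is required to be a proper initial data set for the class $A_k$, and that the same definition imposes $d_{A_k}\ge 0$. Consequently condition (2) in the definition of proper initial data applies verbatim to $\mathcal{I}_k$, giving
\[
2d_1^k + d_2^k - l_2^k - 2l_3^k = 2\bigl(d_{A_k} - l_{A_k}\bigr).
\]

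From this identity I would isolate the combination $d_2^k - l_2^k$, namely
\[
d_2^k - l_2^k = 2\bigl[(d_{A_k} - l_{A_k}) - d_1^k + l_3^k\bigr],
\]
which is manifestly even. Since $d_2^k + l_2^k$ and $d_2^k - l_2^k$ differ by $2l_2^k$, they share the same parity, and hence $d_2^k + l_2^k$ is even as claimed. This is the whole argument: the content is a single parity observation, and the evenness of the sum is exactly what later guarantees that $p(h)$ from Definition \ref{p} is insensitive to a relabeling of the components $C_k$.

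The only point requiring care is the bookkeeping ensuring that the relation above genuinely holds for each restricted datum $\mathcal{I}_k$, so I would also confirm the boundary cases. If $d_{A_k}=0$, then Lemma \ref{values} forces $d_2^k=l_2^k=0$, so the sum vanishes and the conclusion is trivial; the case $d_{A_k}<0$ does not occur, since both properness and Definition \ref{defR} demand $d_{A_k}\ge 0$ for every component appearing in a point of $\mathcal{R}_V$. There is no substantive obstacle here, as the properness relation for the subclass $A_k$ is supplied directly by the hypotheses defining $\mathcal{R}_V$.
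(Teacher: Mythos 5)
Your proof is correct and follows essentially the same route as the paper: both extract the evenness of $d_2^k+l_2^k$ by rewriting the properness relation $2d_1^k+d_2^k-l_2^k-2l_3^k=2(d_{A_k}-l_{A_k})$, which forces $d_2^k-l_2^k$ (and hence $d_2^k+l_2^k$) to be even. The only cosmetic difference is that you invoke the properness of each $\mathcal{I}_k$ directly from Definition \ref{defR}, while the paper cites its genericity results to the same effect; the parity argument itself is identical.
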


\begin{proof}
We have shown, that for generic $(J,\mathcal I_A)$ we can have curves in the class $A$ only if  the initial data is proper.  This holds in particular for every connected component of the curve $h$.  The condition for properness can be easily rewritten to show that $d^k_2+l_2^k$ is even.

\end{proof}

\subsubsection{$q(h)$}
The value of $q(h)$ for the point $h=\{(C_k,m_k,\mathcal I_k)\}$  is given by the product
\begin{equation}\label{q(h)}
q(h)=p(h)\prod_k r(C_k,m_k,\mathcal I_k)
\end{equation}
where the $r(C,m,\mathcal I)$ are given by $r(C,\mathcal I)$ as defined for non-multiply toroidal classes in Section \ref{relinv} and by Taubes in the toroidal case.

\subsection{Properties of Relative Gromov-Taubes Invariants}

\begin{theorem}\label{invG}
The number $GT^V(A)([{\mathcal I_A}])$ depends only on the deformation class of $(X,V,\omega)$, the class $A\in H_2(X)$, the initial class $[\mathcal I_A]$ and the ordering of the data in the sets $\Gamma_*$.  In particular, it does not depend on a particular choice of $(J,\mathcal I_A)$.
\end{theorem}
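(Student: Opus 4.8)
The plan is to deduce Theorem \ref{invG} from the connected invariance result Theorem \ref{invN}, together with the Taubes invariance of $Qu^V(T,n)$ for the multiply toroidal pieces, by exhibiting $GT^V(A)([\mathcal I_A])$ as a signed product of connected invariants, exactly as announced in the introduction. First I would fix an r-admissible pair $(J,\mathcal I_A)$, whose existence and genericity is guaranteed by the preceding Proposition and Definition \ref{rad}, so that every $h\in\mathcal R_V(A,J,\mathcal I_A)$ consists of disjoint, non-degenerate connected components $C_k$, none of which lies in $V$. Because the components are disjoint and each $\mathcal I_k$ is a proper initial data set for the class $A_k$ with $\sqcup_k\mathcal I_k=\mathcal I_A$ and $\sum_k m_kA_k=A$, such a point $h$ is completely determined by a combinatorial type $\tau=\{(A_k,m_k,[\mathcal I_k])\}$ (a decomposition of $A$ together with a compatible partition of $\mathcal I_A$ into proper pieces) and, for each $k$, a choice of connected submanifold $C_k\in\mathcal K_V(A_k,J,\mathcal I_k)$ or, when $C_k$ is a torus with trivial normal bundle, a configuration counted by Taubes.

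I would then regroup the sum in Definition \ref{relgr} according to $\tau$. Fixing $\tau$ and summing over all realizations, the product $\prod_k r(C_k,m_k,\mathcal I_k)$ in Eq. \ref{q(h)} distributes, so that the summed product of per-component $r$-values equals $\prod_k Ru^V(A_k,[\mathcal I_k])$ on the non-toroidal factors and the corresponding product of $Qu^V$-values on the toroidal ones. This is where the main bookkeeping lives: I must verify that the orientation data defining each $r(C_k,\cdot)$ — the orientation of $\ker D$ induced by the analytic path of operators of Section \ref{relinv} and the evaluation isomorphism $H$ — depends only on $C_k$ and $\mathcal I_k$ and not on the other components, so that the determinant sign genuinely factors. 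Since the operators $D$ and the normal bundles $N$ are associated to each $C_k$ individually, and the $C_k$ are disjoint, this factorization is clean; the only point requiring care is that the partition $\mathcal I_A=\sqcup_k\mathcal I_k$ is forced to be into \emph{proper} pieces, which is precisely what properness of $\mathcal I_A$ and the disjointness of the $C_k$ deliver.

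The remaining ingredient is the permutation sign $p(h)$ of Definition \ref{p}. I would show that $p(h)$ reassembles into the product structure: the ordering on $\Gamma_{d_2}\sqcup\Gamma_{l_2}$ induced by the ordered class $[\mathcal I_A]$ restricts to an ordering on each $\Gamma_*^k$, and $p(h)=\mathrm{sign}(\pi)$ is exactly the sign needed to pass from the $[\mathcal I_A]$-ordering to the concatenation of the per-component orderings. The Lemma establishing that $d_2^k+l_2^k$ is even guarantees that this sign is independent of the labeling of the components, so $p(h)$ contributes a factor depending only on $\tau$ and the fixed ordering of the $\Gamma_*$, never on $(J,\mathcal I_A)$. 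Combining the three steps, $GT^V(A)([\mathcal I_A])$ becomes a finite sum over types $\tau$ of signed products of connected invariants, each factor of which is a deformation invariant by Theorem \ref{invN} and the Taubes invariance of $Qu^V$; since the finite set of admissible types $\tau$ and the attached signs depend only on $A$, $[\mathcal I_A]$ and the $\Gamma_*$-ordering, the whole sum is independent of $(J,\mathcal I_A)$ and invariant under deformation of $(X,V,\omega)$.

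As a consistency check one could instead mimic Lemma \ref{definv} directly, building the universal space $\mathcal Y$ over a path $\{\omega_t\}$ while now allowing disconnected domains, and using r-admissibility to show the parametrized moduli space $\Xi_{\mathfrak s}$ is a compact oriented $1$-manifold whose signed boundary count equals the difference of $GT^V$ at the endpoints. In either approach the hardest part is the same, namely controlling compactness so that no component degenerates into $V$ or into a new multiply toroidal configuration along the path; stability of $V$ (the hypothesis $d_{\mathfrak V}\ge 0$) together with Lemma \ref{index} rules out the former, and the $n$-non-degeneracy built into Definition \ref{rad} handles the latter.
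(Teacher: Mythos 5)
Your proposal is correct and follows essentially the same route as the paper: the paper also proves Theorem \ref{invG} by resumming $GT^V(A)([\mathcal I_A])$ over decomposition types $y\in S(A)$ into products of the connected invariants $Ru^V(A_k,[\mathcal I_{A_k}])$ and the toroidal factors $Qu^V(A_k,m_k)$ (with a combinatorial factor $Per(y)$ playing the role of your sum over partitions of $[\mathcal I_A]$ into proper pieces), factoring the sign $p(h)$ into per-component contributions via the evenness of $d_2^k+l_2^k$, and then invoking Theorem \ref{invN}, Lemma \ref{A=V}, and Taubes' invariance of $Qu$. Your closing cobordism-style alternative is not what the paper does, but the main argument you give matches the paper's proof in both structure and the key bookkeeping points.
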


To prove this, we proceed to rewrite the number $GT^V(A)([\mathcal I_A])$ in terms of the number $Ru^V(A,[\mathcal I_A])$ and the toroidal contributions $Qu^V(A,m)$.  To do so we introduce notation:  Let $A\in H_2(X)$ be fixed.  Denote by $S(A)$ the set defined by Taubes: This is the collection of unordered sets of pairs $\{(A_k,m_k)\}$ with the following properties:
\begin{enumerate}
\item $\{A_k\}$ is a set of distinct, non-multiply toroidal classes.
\item $m_k=1$ unless $A_k^2=0$,  in which case $m_k\ge 1$ can be any positive integer.
\item $A_k\cdot A_l=0$ if $k\ne l$.
 \item $A=\sum m_kA_k$.
\end{enumerate}

Note that it is possible for $m_k\ge 2$ but for $A_k$ not to be a toroidal class:  The set $S(A)$ is a set of homology classes, we allow our submanifolds to be composed of multiple disjoint copies of classes with 0 self intersection.  This is taken into account by this condition.

For a given tuple $y=\{(A_K,m_K)\}\in S(A)$, denote by $\tau(y)$ the set of pairs which appear in $y$ and which satisfy one of the following conditions:
\begin{enumerate}
\item $A_k^2\ne 0$ or
\item $c_1(A_k)\ne 0$.
\end{enumerate}

With this notation we can prove the following Lemma, analogous to Lemma 5.6, \cite{T4}:

\begin{lemma}

\begin{equation}
GT^V(A)(\mathcal I_A)=\sum_{y\in S(A)}\left[\prod_{\tau(y)}Per(y)\left[Ru^V(A_k,[\mathcal I_{A_k}])\right]^{m_k}\right] 
\end{equation}
\[
\times \left[\prod_{(A_k,m_k)\not\in\tau(y)}Qu^V(A_k,m_k)\right]
\]
where
\[
Per(y)=\frac{d_A!}{(d^k_1!d_2^k!)^{m_k}}\frac{l_A!}{(l^k_1!l^k_2!l^k_3!)^{m_k}}\frac{1}{m_k!}
\]

\end{lemma}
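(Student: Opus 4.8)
The plan is to begin from the definition $GT^V(A)([\mathcal I_A]) = \sum_{h \in \mathcal R_V(A,J,\mathcal I_A)} q(h)$ for an r-admissible pair $(J,\mathcal I_A)$ (Definition \ref{rad}), and to reorganize this finite sum according to the homological type of each configuration. For a point $h = \{(C_k, m_k, \mathcal I_k)\}$ I set $y(h) = \{([C_k], m_k)\}$. First I would verify that $y(h) \in S(A)$: disjointness of the $C_k$ together with the genericity of $(J,\mathcal I_A)$ forces $[C_k]\cdot[C_l] = 0$ for $k \ne l$, Definition \ref{defR} forces $m_k = 1$ unless $[C_k]^2 = 0$, and $\sum m_k[C_k] = A$ by construction; the requirement that the $[C_k]$ be distinct non-multiply-toroidal classes is arranged by absorbing multiple covers of a fixed torus into a single pair with larger $m_k$. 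This groups the sum as $GT^V(A)([\mathcal I_A]) = \sum_{y \in S(A)} \left(\sum_{h : y(h) = y} q(h)\right)$.

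The heart of the argument is to show that the inner sum factorizes. Fix $y = \{(A_k, m_k)\}$. A configuration $h$ of type $y$ is specified by a partition of $\mathcal I_A$ into the subsets $\mathcal I_i$ attached to individual components, subject to properness of each $\mathcal I_i$ and to $\cup \mathcal I_i = \mathcal I_A$, together with an actual choice of submanifold from the connected space $\mathcal K_V$ (or its toroidal analogue) for each resulting class-plus-data pair. Because the pair is r-admissible these choices are independent across distinct classes, so $\sum_{h: y(h)=y} \prod_k r(C_k,m_k,\mathcal I_k)$ splits as a product over the classes $A_k$ of sums over connected submanifolds. For a class in $\tau(y)$ each of its $m_k$ copies is an embedded non-degenerate submanifold, and the sum over submanifolds meeting a fixed distribution of data reproduces $Ru^V(A_k,[\mathcal I_{A_k}])$ by its definition in Section \ref{relinv}; since the copies are disjoint and carry the same data class this yields the factor $[Ru^V(A_k,[\mathcal I_{A_k}])]^{m_k}$. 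For a class not in $\tau(y)$, i.e. a genuinely toroidal piece with $A_k^2 = 0$ and $c_1\cdot A_k = 0$, Lemma \ref{values} and Definition \ref{toricontact} show that the only admissible data are the trivial $(1,\dots,1)$ contact tuples, so the sum over multiple-cover contributions is exactly Taubes' count, i.e. the factor $Qu^V(A_k,m_k)$.

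Next I would extract the combinatorial factor $Per(y)$. For each class $A_k$ carrying data parameters $d_1^k, d_2^k, l_1^k, l_2^k, l_3^k$ with multiplicity $m_k$, the number of ways to assign the distinguishable points and curves of $\mathcal I_A$ among the $m_k$ identical copies is the product of multinomial coefficients $d_A!/(d_1^k! d_2^k!)^{m_k}$ and $l_A!/(l_1^k! l_2^k! l_3^k!)^{m_k}$, while the factor $1/m_k!$ corrects for the fact that the $m_k$ copies are unordered in the definition of $\mathcal R_V$. Collecting these over the classes in $\tau(y)$ gives precisely $\prod_{\tau(y)} Per(y)$. The sign $p(h) = \mathrm{sign}(\pi_{d_2})\mathrm{sign}(\pi_{l_2})$ must then be incorporated: using the evenness of $d_2^k + l_2^k$ established just before Definition \ref{p}, I would check that reordering the $\Gamma$-data within a single distribution produces no net sign, so that $p(h)$ is constant on each distribution class and combines consistently with the ordering convention on the sets $\Gamma_*$.

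I expect the main obstacle to be the precise bookkeeping in the last step: disentangling the sign $p(h)$ from the orientation data already built into each $r(C_k,m_k,\mathcal I_k)$, and confirming that the multinomial counting of data-distributions combines with the $1/m_k!$ symmetrization to yield exactly $Per(y)$ without over- or under-counting configurations in which several components share a homology class. All of this parallels Taubes' Lemma 5.6, \cite{T4}; the only genuinely new input is that each distribution must respect properness of the partial data $\mathcal I_i$, which the constraints in Definition \ref{defR} guarantee, so the absolute argument transfers once the partitioning of $\mathcal I_A$ is handled.
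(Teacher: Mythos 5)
Your proposal is correct and takes essentially the same approach as the paper: both treat $GT^V(A)([\mathcal I_A])$ as a resummation of the defining sum $\sum_{h}q(h)$ grouped by homology type, factor the inner sum into $Ru^V(A_k,[\mathcal I_{A_k}])$ and $Qu^V(A_k,m_k)$ contributions using r-admissibility/genericity, treat $Per(y)$ as the purely combinatorial count of data distributions, and handle the sign $p(h)$ via its decomposition into component permutations together with the evenness of $d_2^k+l_2^k$. In fact your writeup fills in substantially more detail (the explicit map $h\mapsto y(h)$, the verification $y(h)\in S(A)$, the factorization mechanism) than the paper's terse three-point sketch, so it is a fuller version of the same argument.
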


{\bf Remark:}  The sum in the above Lemma may incorporate submanifolds in the three exceptional cases allowing for components $V$.  This is the reason for defining $r(V,m)=0$, as these submanifolds will not contribute to the above sum.

\begin{proof}
This is a resummation of the defining sum
\[
GT^V(A)([\mathcal I_A])=\sum_{h\in\mathcal R_V(A,J,\mathcal I_A)}q(h).
\]
The key points are the following:
\begin{enumerate}
\item The permutation term $p(h)$ can be decomposed into a product of permutation terms $p(C_k,m_k,\mathcal I_k)$ stemming from the components $(C_k,m_k,\mathcal I_k)$ of $h$.  These are taken into account in the number $Ru^V(A_k,[\mathcal I_k])$ with the ordering on $[\mathcal I_k]$ induced by the ordering of $[\mathcal I_A]$.

\item Given $h=\{(C_k.m_k,\mathcal I_k)\}$ we can replace any $C_k$ by any other submanifold in $\mathcal K_V(J,\mathcal I_{A_k})$.  Hence all elements of class $A_k$ meeting the data $\mathcal I_k$ appear in the definition of the relative Gromov-Taubes invariant.  Hence resumming along homology classes simply reorders the sum but does not add any new terms.

\item The term $Per(y)$ accounts for permutations of the initial data on the components of $h$.  This is a purely combinatorial term, we have accounted for the topological effects of permuting the initial data in the term $Ru^V(A_k,[\mathcal I_k])$.

\end{enumerate}
\end{proof}

We have seen above, that the numbers $Ru^V(A,[\mathcal I_A])$ are invariants.  The proof of Theorem \ref{invG} will be complete with the results for multiply toroidal classes in Section \ref{taubesinv}, in particular Lemma \ref{A=V}, and the following Lemma:

\begin{prop}(Lemma 5.7, \cite{T4})\label{taubesprop}
Let $T\in H_2(X)$ be an indivisible toroidal class.  Let $n\ge 1$ be an integer.  Then $Qu(T,n)$ depends only on the deformation class of the symplectic form.
\end{prop}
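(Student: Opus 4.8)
The plan is to establish deformation invariance by a parametrized cobordism argument of the kind Taubes carries out for Lemma 5.7 of \cite{T4}. Let $\omega_0$ and $\omega_1$ lie in the same deformation class and join them by a smooth path $\{\omega_t\}_{t\in[0,1]}$ of symplectic forms. First I would choose $\omega_i$-compatible almost complex structures $J_0$ and $J_1$ from the open dense sets furnished by Lemma \ref{A=V}, so that for every $q\le n$ the collection of embedded $J_i$-holomorphic tori in class $qT$ is finite and $n$-nondegenerate in the sense of Definition \ref{0tori}, and $Qu(T,n)$ is computed from this data. I would then extend $J_0,J_1$ to a generic path $\{J_t\}$ with each $J_t$ $\omega_t$-compatible. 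Since the genericity required here is exactly that produced in Section \ref{generic} and Lemma \ref{A=V}, the restriction to structures making a fixed hypersurface pseudoholomorphic introduces no new difficulty for the relative application.

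Next I would form the parametrized moduli space $\mathcal M$ of pairs $(t,C)$ with $C$ an embedded $J_t$-holomorphic torus in some class $qT$, $q\le n$, refined by the multiple-cover data encoded in the representations $\rho:\pi_1(C)\to P_m$ used to extend the operator $D$ of Eq. \ref{D} to sections of $V_\rho\times N$. Because a toroidal class has $d_{qT}=0$, each stratum has expected dimension $1$; hence for a generic path $\{J_t\}$ the space $\mathcal M$ is a smooth compact $1$-manifold away from finitely many parameters, and its boundary consists precisely of the tori contributing to $Qu(T,n)$ at $t=0$ and at $t=1$. Compactness of $\mathcal M$ follows from Gromov compactness together with Lemma \ref{conv}: any sequence of embedded tori in a class $qT$ with $q\le n$ limits to an embedded square-$0$ torus in a class $q'T$, so no components of negative square and no curves lying in $V$ can intrude along the cobordism.

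The heart of the matter, and the step I expect to be the main obstacle, is the analysis of $\mathcal M$ near the finitely many parameters $t_\ast$ at which transversality fails, i.e. where the extended operator on $V_\rho\times N$ acquires a kernel. This is exactly the wall-crossing behaviour of square-$0$ tori and their branched multiple covers: as $t$ crosses $t_\ast$ a family of embedded tori may be created or annihilated, or a torus may spawn or absorb multiple covers. The weights $r(C,m)$ are defined through the spectral flow of a real-analytic path of such operators precisely so that, across each wall, the signed count $\sum_{\{(C_k,m_k)\}}\prod_k r(C_k,m_k)$ over decompositions $n=\sum q_k m_k$ is unchanged. Verifying this local cancellation is the delicate analytic and combinatorial content of Taubes' argument, and since the contribution of each bifurcation type depends only on the structure of the normal-bundle operator over the fixed square-$0$ torus and not on any global feature of $\omega$, I would invoke his computation verbatim.

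Summing the signed contributions of the endpoints of each one-dimensional component of $\mathcal M$, the interior bifurcations cancel in pairs by the wall-crossing analysis above, and the boundary contributions at $t=0$ and $t=1$ are the respective values of $Qu(T,n)$. This yields $Qu_{\omega_0}(T,n)=Qu_{\omega_1}(T,n)$ and hence the asserted invariance under deformation of the symplectic form.
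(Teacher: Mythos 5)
The paper offers no proof of this proposition at all: it is quoted verbatim as Lemma 5.7 of \cite{T4} and used as a black box to complete the proof of Theorem \ref{invG}, so the only argument ``in the paper'' is the citation to Taubes. Your proposal is in the same spirit: you reconstruct the parametrized-cobordism skeleton of Taubes' argument (generic endpoints, a generic path $\{J_t\}$, compactness of the parametrized moduli space, cancellation of interior bifurcations against the spectral-flow-defined weights $r(C,m)$) and then invoke Taubes verbatim for the wall-crossing analysis, which is exactly where all of the analytic content lies. As a description of how the cited result is actually proved, this is accurate, and it carries the same logical weight as the paper's citation.

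The one concrete flaw is that you have imported the paper's \emph{relative} machinery into what is an \emph{absolute} statement. $Qu(T,n)$ is Taubes' invariant, computed with $J$ generic in $\mathcal J_\omega$; no hypersurface $V$ appears in its definition. Your first step takes $J_0,J_1$ from the open dense subsets of $\mathcal J_V$ furnished by Lemma \ref{A=V}, and your compactness appeal is to Lemma \ref{conv} --- both relative statements tied to a fixed $V$. A count performed at a $J\in\mathcal J_V$ is, by construction, the relative invariant $Qu^V(T,n)$: a structure that is generic only within the constrained space $\mathcal J_V$ need not lie in Taubes' admissible subset of $\mathcal J_\omega$, and the paper itself stresses (Remark after Prop. \ref{mainprop}) that the relationship between the two notions of genericity is completely unclear. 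So, as written, your cobordism establishes deformation invariance of $Qu^V(T,n)$ --- the separate claim addressed at the end of Section \ref{taubesinv} --- rather than of $Qu(T,n)$ as stated. The repair is simply to strip $V$ out of the argument: take the endpoints from Taubes' own open dense sets in $\mathcal J_\omega$ (his Lemmas 5.3--5.5 in \cite{T4}, one of which is quoted in Section \ref{multori}) and the path generic among $\omega_t$-compatible structures; with that substitution your outline is precisely Taubes' proof.
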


\begin{example}Recall the example in genus 0, see Thm. \ref{spheres}.  In this case we determined the relative Ruan invariants for spheres in non-rational and non-ruled manifolds.  Assume that $A=\sum_{i=1}^nE_i$, each $E_i$ an exceptional sphere.  It then follows that
\begin{equation}
\mathcal R_V(A,J,\mathcal I_{A})=\left\{\begin{array}{cc} \{(E_1,1,\mathcal I_1),...,(E_m,1,\mathcal I_m)\}&  [E_i]\cdot \mathfrak V>0\;\forall i\\
\emptyset &  \mbox{otherwise}
\end{array}\right.
\end{equation}
which consists of one curve with $m$ components.  The invariant in the first case reduces to
\begin{equation}
GT^V(A)([\mathcal I_A])=q(\{(E_1,1,\mathcal I_1),...,(E_m,1,\mathcal I_m)\})=\prod_{i=1}^mRu^V(E_i,J,\mathcal I_i)=1.
\end{equation}
\end{example}

\end{document}